\theoremstyle{plain}
\newtheorem{theorem}{Theorem}
\newtheorem{lemma}[theorem]{Lemma}
\newtheorem{corollary}[theorem]{Corollary}
\newtheorem{observation}[theorem]{Observation}
\theoremstyle{plain}
\theoremstyle{remark}
\algnewcommand{\LIf}[1]{\State\algorithmicif\ #1\ \algorithmicthen}
\algnewcommand{\EndLIf}{\unskip\ \algorithmicend\ \algorithmicif}
\algnewcommand{\LForAll}[1]{\State\algorithmicfor\ #1\ \algorithmicdo}
\algnewcommand{\EndLFor}{\unskip\ \algorithmicend\ \algorithmicfor}
\begin{document}
\title{Irreducible $4$-critical triangle-free toroidal graphs\footnote{Supported by the Neuron Foundation for Support of Science under Neuron Impuls programme.  An extended abstract of this paper appeared in proceedings of Eurocomb'17.}}
\author{%
     Zdeněk Dvořák\thanks{Computer Science Institute (CSI) of Charles University,
           Malostranské náměstí 25, 118 00 Prague, 
           Czech Republic. E-mail: \protect\href{mailto:rakdver@iuuk.mff.cuni.cz}{\protect\nolinkurl{rakdver@iuuk.mff.cuni.cz}}.
	   }
    \and
    Jakub Pekárek\thanks{Charles University,
           Malostranské náměstí 25, 118 00 Prague, 
           Czech Republic. E-mail: \protect\href{mailto:edalegos@gmail.com}{\protect\nolinkurl{edalegos@gmail.com}}.}
}
\date{\today}
\maketitle
\begin{abstract}
The theory of Dvořák, Král', and Thomas~\cite{trfree4} shows that a $4$-critical triangle-free graph
embedded in the torus has only a bounded number of faces of length greater than $4$ and that the
size of these faces is also bounded.  We study the natural reduction in such embedded graphs---identification
of opposite vertices in $4$-faces.  We give a computer-assisted argument showing that
there are exactly four $4$-critical triangle-free \emph{irreducible} toroidal graphs in which
this reduction cannot be applied without creating a triangle.  Using this result, we show
that every $4$-critical triangle-free graph embedded in the torus has at most four $5$-faces, or
a $6$-face and two $5$-faces, or a $7$-face and a $5$-face, in addition to at least seven $4$-faces.
This result serves as a basis for the exact description of $4$-critical triangle-free toroidal graphs,
which we present in a followup paper.
\end{abstract}

\section{Introduction}

The subject of coloring graphs on surfaces goes back to the work of Heawood~\cite{Heawood}, who proved that any graph $G$ drawn in surface $\Sigma$ of Euler genus $g>0$ is
$$H(\Sigma) \colonequals \lfloor (7 + \sqrt{24g + 1})/2\rfloor.$$
Franklin~\cite{franklin} and Ringel and Youngs~\cite{ringel} later showed that the bound is best possible for all surfaces except the Klein bottle,
for which the correct bound is 6. Incidentally, the assertion holds for the sphere as well, as stated by the Four-Color Theorem~\cite{AppHak1,AppHakKoc,rsst}. 

While Heawood's formula gives a tight bound on the possible values of chromatic
number of graphs on almost all surfaces, values close to the bound are achieved
by only relatively few graphs. An improvement of Heawoods's formula in this
sense was brought by Dirac~\cite{diracmaps} and Albertson and Hutchinson~\cite{Albhut} who showed that the
graph drawn in $\Sigma$ has chromatic number exactly $H(\Sigma)$ if and only if it contains a subgraph
isomorphic to the complete graph on $H(\Sigma)$ vertices. 

Further improvements are possible for large enough graphs. A graph $G$ is
\emph{k-critical} if its chromatic number is exactly $k$ and every proper
subgraph of $G$ has chromatic number at most $k-1$.
The importance of this notion comes from the fact that a graph is $k$-colorable
if and only if it does not contain a $k$-critical subgraph.
It follows from Euler's formula that if $\Sigma$ is a fixed surface and a graph $G$ drawn in $\Sigma$
has sufficiently many vertices, then $G$ has a vertex of degree at most six.
Consequently, for every $k \geq 8$, all $k$-critical graphs drawn in $\Sigma$
have a bounded number of vertices, and thus there are only finitely many such $k$-critical graphs.
A similar argument shows that this also is the case for $k = 7$. Hence, large chromatic number
of a graph embedded in a surface is always forced by one of finitely many obstructions.

A much more involved argument of Thomassen~\cite{Thomassen97} shows that
for every surface there are also only finitely many 6-critical graphs that can be
drawn in $\Sigma$. An immediate
consequence from a computational point of view is that for every $k \geq 6$,
fixed surface $\Sigma$ and a graph $G$ drawn in $\Sigma$ it is possible to
efficiently test whether $G$ is $(k-1)$-colorable in linear time, by testing the presence of
all possible $k$-critical subgraphs (the linear-time complexity can be achieved by using the
subgraph testing algorithm of Eppstein~\cite{eppstein00}). Such algorithm can
be constructed if an explicit full list of $k$-critical graphs on $\Sigma$ is
provided. The lists of 6-critical graphs are explicitly known for the
projective plane~\cite{albertson1979three}, the torus~\cite{Tho5torus} and
the Klein bottle~\cite{KawKraKynLid,ChePosStrThoYer}. 

Since the problem of testing 2-colorability is polynomial-time solvable, and
the problem of testing 3-colorability for planar graphs is NP-complete~\cite{garey1979computers},
the only remaining non-trivial case is
4-colorability. It is a long-standing open problem whether there is a polynomial time
algorithm for testing 4-colorability of graphs in a fixed surface $\Sigma$
other than the sphere. However a characterization by finitely many obstructions similar to the one described
for $(\geq5)$-colorability above does not exist, as shown by an elegant
construction of Fisk~\cite{Fisk78}. 

Let us consider the analogous problem for embedded graphs of larger girth.
Chromatic number of graphs of girth at least five is characterized by a deep
theorem of Thomassen~\cite{thomassen-surf} who showed that for every $k \geq 4$ and every
surface $\Sigma$ there are only finitely many $k$-critical graphs of girth at
least five that can be drawn in $\Sigma$. Thus testing $(k-1)$-colorability of
graphs of girth at least five again reduces to deciding the presence of
finitely many obstructions for any $k \geq 4$. There turn out to be no
4-critical graphs of girth at least five in the projective plane and the torus~\cite{thom-torus}
and in the Klein bottle~\cite{tw-klein} (i.e., all graphs of girth five drawn in one of these
surfaces are $3$-colorable).  For torus, Thomassen~\cite{thom-torus}
actually proved the following stronger claim.

\begin{theorem}[\cite{thom-torus}]\label{Thomassen}
If $G$ is a graph drawn in torus such that all contractible cycles have length at least five, then $G$ is 3-colorable. 
\end{theorem}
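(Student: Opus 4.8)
\noindent\emph{Proof strategy.}
The plan is to argue by contradiction. Let $G$ be a counterexample with the fewest vertices and, among those, the fewest edges: so $G$ is embedded in the torus, every contractible cycle of $G$ has length at least five, and $G$ is not $3$-colorable, while every proper subgraph is. Deleting a vertex or an edge cannot turn a non-contractible cycle into a contractible one, so every proper subgraph still satisfies the hypothesis; hence by minimality $G$ is $4$-critical, and in particular $2$-connected with minimum degree at least $3$. Since $G$ is $2$-connected every face is bounded by a cycle, and that cycle is contractible, so every face has length at least five. Plugging $2e \ge 5f$ into Euler's formula $v - e + f = 0$ for the torus gives $e \le \tfrac{5}{3}v$, whence at least $\tfrac{2}{3}$ of the vertices of $G$ have degree exactly $3$ and $G$ has comparatively few short faces. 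Two cases arise according to the length $k$ of a shortest non-contractible cycle $C_0$ of $G$: either $k \ge 5$, so that $G$ has girth at least five everywhere and we are in the ``Gr\"otzsch for the torus'' situation, or $k \in \{3,4\}$ and $G$ carries a genuinely short non-contractible cycle.

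The main idea is to reduce the toroidal statement to a planar one by cutting along $C_0$, chosen not only to be shortest but also \emph{tight} (no chords, and the two disks obtained after cutting inherit contractible girth at least five). Cutting the torus along $C_0$ produces a graph $G'$ drawn in a cylinder with two boundary cycles $C_1,C_2$, each a copy of $C_0$. A $3$-coloring of $G'$ whose restrictions to $C_1$ and $C_2$ agree under the identification descends to a $3$-coloring of $G$, a contradiction. So it suffices to prove a precoloring-extension statement for cylindrical---hence essentially planar---graphs of girth at least five with two boundary cycles of bounded length, and that in turn rests on the planar case: any $3$-coloring of a bounded-length boundary cycle of a plane graph of girth at least five extends to the whole graph unless one of a short explicit list of obstructions occurs near the boundary.

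This planar precoloring-extension lemma is the technical heart, and I would prove it by Thomassen's discharging-free induction: take a vertex-minimal counterexample to the extension statement and exhibit in it one of a handful of reducible configurations---a low-degree vertex whose deletion, or whose identification with a non-adjacent neighbour along a path that creates no cycle shorter than five, yields a strictly smaller instance; a short separating cycle allowing the graph to be split and the pieces coloured consistently; or a short path between boundary vertices allowing a cut-and-identify step---and in each case lift the coloring of the smaller instance back to contradict minimality. Feeding this lemma through the cylinder (tracking how far $C_1$ is from $C_2$ and which planar obstructions could survive the gluing) and then back to the torus finishes the proof, while the short non-contractible cycles with $k\in\{3,4\}$ give low-complexity configurations that are dealt with directly. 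I expect the main obstacle to be exactly this planar lemma together with the bookkeeping of its exceptional configurations: one must choose the right monovariant---boundary length, or a weighting of boundary length against the number of internal faces---so that every reduction strictly decreases it, and one must check that each of the finitely many planar exceptions either cannot reappear once the cylinder is glued up into a torus or can be $3$-coloured by hand.
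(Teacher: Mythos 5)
First, a point of order: the paper does not prove this statement at all; it is quoted from Thomassen \cite{thom-torus} and used as a black box, so there is no internal proof to compare against. Your proposal is broadly in the spirit of Thomassen's actual argument (minimal counterexample, cut along a non-contractible cycle, reduce to a planar precoloring-extension lemma for girth-five graphs), but as written it is a strategy outline whose technical core is deferred, and several of the steps you do commit to have genuine gaps.

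Concretely. (1) The Euler-formula step rests on the claim that $2$-connectivity forces every face to be bounded by a contractible cycle. On the torus this is false: a $2$-connected graph can have facial walks that repeat vertices, and the embedding need not even be $2$-cell. (The paper itself needs representativity at least $2$ --- obtained only as a consequence of the full enumeration --- to prove the analogous statement in Corollary~\ref{IrFaceCyc}.) The inequality $2e\ge 5f$ can be salvaged by arguing about facial walks directly, but not by the route you give. (2) Your main reduction cuts along a shortest non-contractible cycle $C_0$ and invokes ``a precoloring-extension statement for cylindrical graphs with two boundary cycles of bounded length.'' But $|C_0|$ is not bounded: a shortest non-contractible cycle in a hypothetical counterexample can be arbitrarily long, so in the main case $k\ge 5$ the lemma you plan to apply has no bounded boundary to work with. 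Moreover, even for bounded $k$, what you need is not a one-boundary extension lemma but a simultaneous one: a single coloring of $C_0$ that extends across the whole cylinder \emph{and} whose restrictions to the two boundary copies agree under the identification. This two-sided self-consistency is precisely where the difficulty of the toroidal case lies, and it does not follow from the planar statement ``every $3$-coloring of a short facial cycle extends unless an explicit obstruction occurs.'' (3) The cases $k\in\{3,4\}$ are not ``low-complexity'': after cutting along a non-contractible triangle or $4$-cycle, the resulting plane graph can contain cycles of length less than five through the boundary vertices, so the girth-five planar machinery does not apply there without further work. Finally, the planar precoloring-extension lemma itself --- the precise list of exceptional configurations and the induction producing them --- is the entire content of Thomassen's theorem and is only named, not proved. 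So the proposal identifies the right landmarks but does not constitute a proof, and two of its load-bearing steps (bounded boundary length, one-sided extension sufficing) would fail as stated.
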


We now turn our attention to the main topic of this paper, graphs of girth at least $4$, i.e., triangle-free graphs.
It is easy to see that for $k\ge 5$, only finitely many $k$-critical triangle-free graphs
can be drawn in any fixed surface.  
Well-known Grötzsch' theorem~\cite{grotzsch1959} shows that every triangle-free planar graph is 3-colorable,
and thus there are no planar triangle-free $4$-critical graphs.
However, for other surfaces the case $k=4$ is much more involved.
For instance, the graphs obtained from an odd cycle of length
five or more by applying Mycielski's construction provide an
infinite class of 4-critical triangle-free graphs embeddable in any surface other than the
sphere. This of course means that 3-colorability of triangle-free graphs on a
fixed surface cannot be decided by testing the presence of finitely
many obstructions.

The only non-planar surface for which the 3-colorability problem for
triangle-free graphs is fully characterized is the projective plane. Building
on earlier work of Youngs~\cite{Youngs}, Gimbel and Thomassen~\cite{gimbel} obtained an elegant
characterization stating that a triangle-free graph drawn in the projective
plane is 3-colorable if and only if it has no subgraph isomorphic to a
non-bipartite quadrangulation of the projective plane.

For other surfaces, only the following approximate characterizations are known.
For a graph $G$ embedded in a surface, let $S(G)$ denote the multiset of
lengths of $(\geq\!5)$-faces of $G$ (thus, the characterization from the previous paragraph
implies that $S(G)=\emptyset$ for every $4$-critical
projective-planar triangle-free graph $G$). Dvořák, Král', and Thomas~\cite{trfree4} proved that for any
surface $\Sigma$, there exists a constant $c_\Sigma$ such that every
$4$-critical triangle-free graph $G$ embedded in $\Sigma$ \emph{without
non-contractible 4-cycles} satisfies $\sum S(G)\le c_\Sigma$; i.e., $G$ has
only a bounded number of faces of length greater than $4$ and these faces have
bounded lengths. Such a bound does not hold in general if non-contractible
$4$-cycles are allowed (but it does hold for toroidal graphs, as we will see
below). A more detailed treatment of $4$-critical triangle-free graphs with
non-contractible $4$-cycles was given by Dvořák and Lidický~\cite{cylgen-part3}.
Dvořák, Král', and Thomas~\cite{trfree6}
proved that for any surface $\Sigma$, a triangle-free graph embedded in
$\Sigma$ with large edgewidth is $3$-colorable unless $\Sigma$ is
non-orientable and the graph contains a quadrangulation with an odd orienting
cycle. They also designed a linear-time algorithm to test $3$-colorability of
embedded triangle-free graphs~\cite{trfree7}.

\begin{figure}[!ht]
\centering

\begin{subfigure}{0.4\textwidth}
\includegraphics[width=140pt]{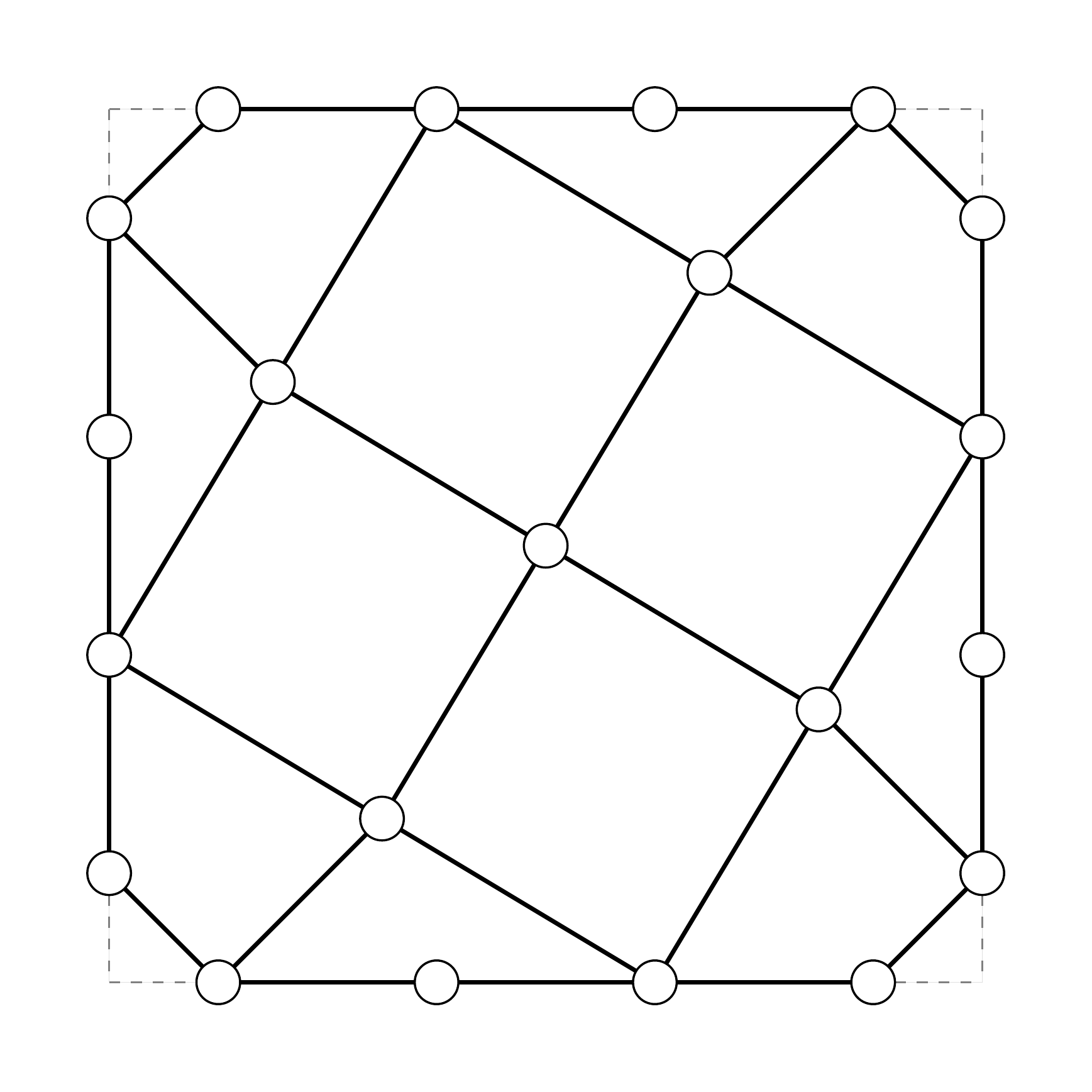}
\caption{Graph $I_4$}
\end{subfigure}
\begin{subfigure}{0.4\textwidth}
\includegraphics[width=140pt]{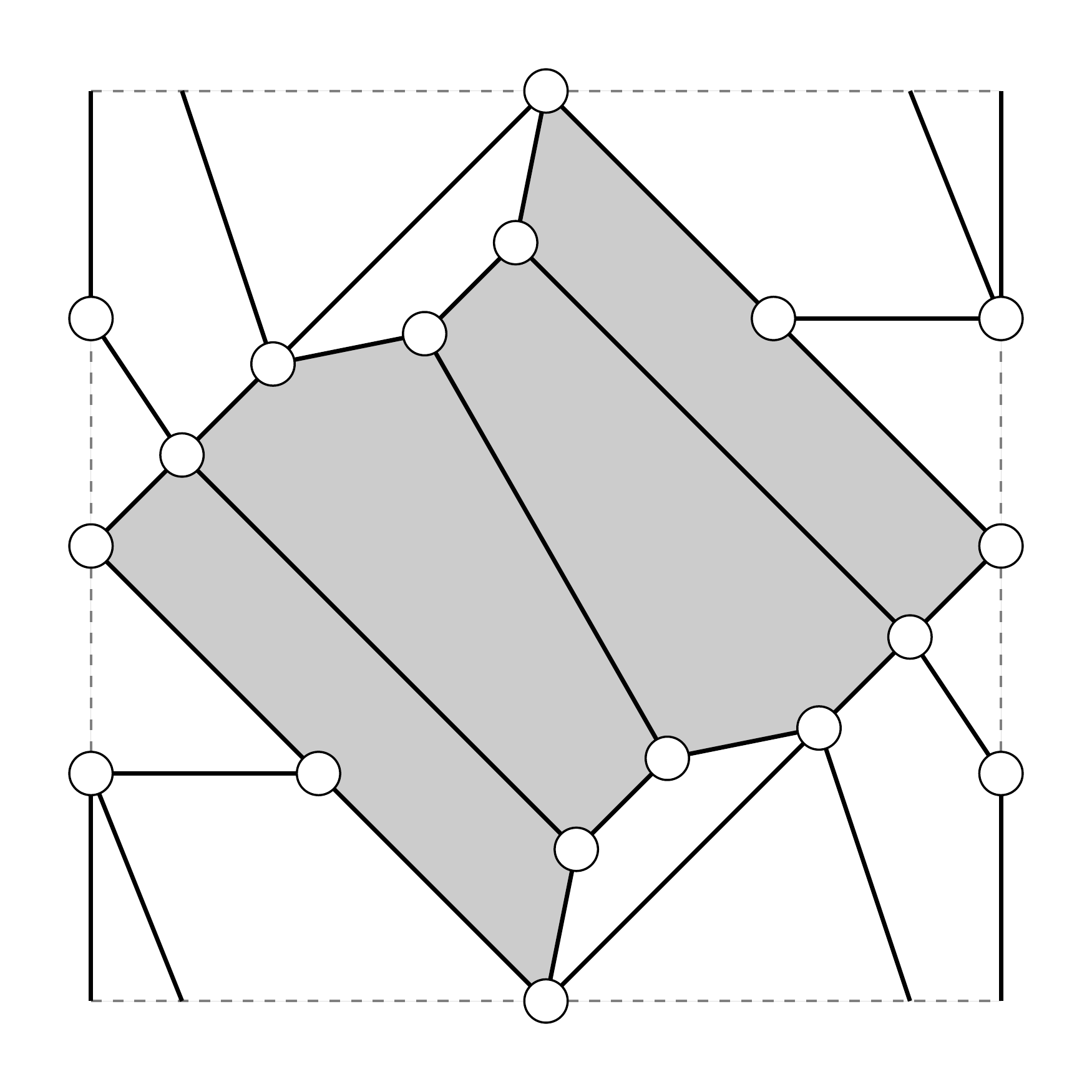}
\caption{Graph $I_5$}
\end{subfigure}

\begin{subfigure}{0.4\textwidth}
\includegraphics[width=140pt]{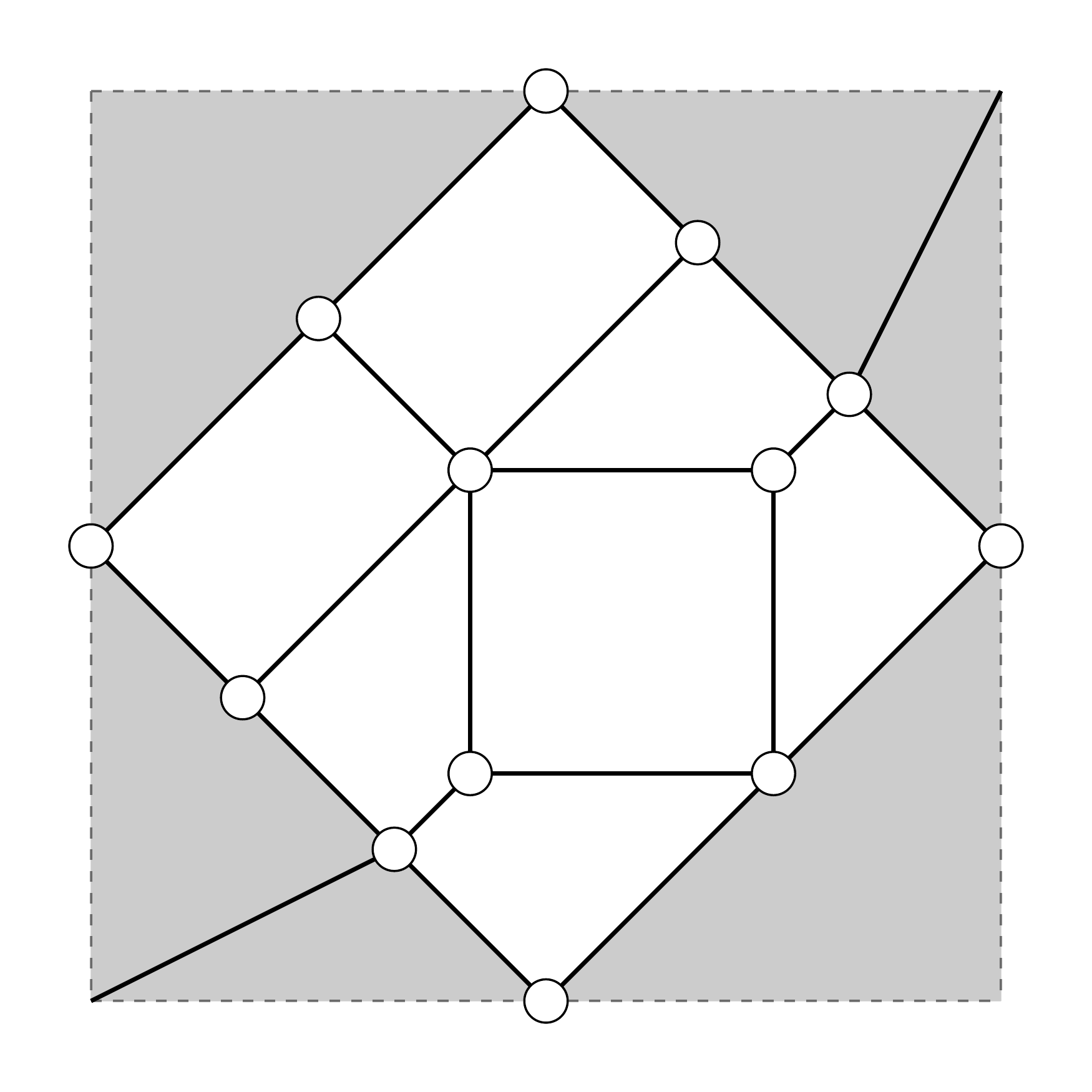}
\caption{Graph $I_7^{a}$}
\end{subfigure}
\begin{subfigure}{0.4\textwidth}
\includegraphics[width=140pt]{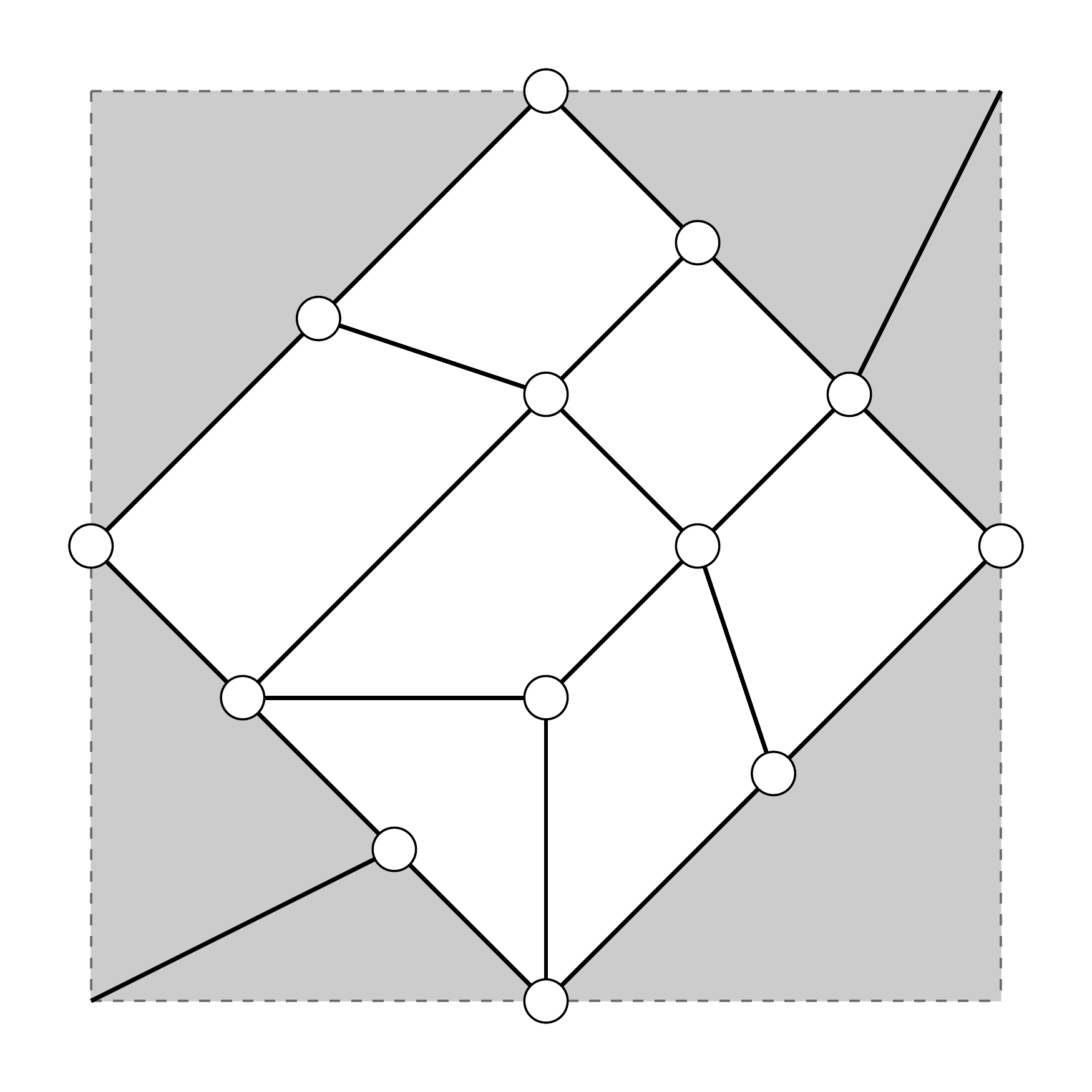}
\caption{Graph $I_7^{b}$}
\end{subfigure}

\caption{Irreducible 4-critical graphs drawn in torus; shadowed faces have length greater than $4$.}
\label{figIr}
\end{figure}

In this paper, we focus on the toroidal case.
Král' and Thomas~\cite{thomas2008coloring} proved that there is only one $4$-critical
triangle-free graph embedded in the torus with all faces of even length
(depicted as $I_4$ in Figure~\ref{figIr}). On
the other hand, the theory of~\cite{trfree4} can be used to show that if $G$ is
a $4$-critical triangle-free graph drawn in torus,
then $\sum S(G)\le 500$.  Our main result is a substantial strengthening of this bound.

\begin{theorem}\label{thm-main}
If $G$ is a 4-critical triangle-free graph drawn in torus, then
$S(G) = \{7,5\}$, or
$S(G) = \{6,5,5\}$, or
$S(G) = \{5,5,5,5\}$, or
$S(G) = \{5,5\}$, or
$S(G) = \emptyset$ and $G$ is the graph $I_4$ depicted in Figure~\ref{figIr}.
Furthermore, $G$ has at least seven $4$-faces and representativity at least $2$.
\end{theorem}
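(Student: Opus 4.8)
The plan is to combine the discharging-type bound $\sum S(G)\le 500$ furnished by the theory of~\cite{trfree4} with the classification of irreducible toroidal graphs stated as the computer-assisted result of this paper. The reduction in question identifies two opposite vertices of a $4$-face; when $G$ is $4$-critical and triangle-free and the identification creates no triangle, the resulting graph $G'$ is again embedded in the torus, still triangle-free, and (because a $3$-coloring of $G'$ pulls back to a $3$-coloring of $G$) still not $3$-colorable, hence contains a $4$-critical triangle-free subgraph $G''$. One checks that this operation does not increase $\sum S$ — in fact identifying opposite vertices of a $4$-face either leaves the face structure essentially unchanged away from that face or merges faces in a controlled way — so we may keep reducing until we reach an irreducible graph $H$, which by the main classification is one of $I_4, I_5, I_7^a, I_7^b$ from Figure~\ref{figIr}. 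For each of these four graphs $S(\cdot)$ is read off the figure: $S(I_4)=\emptyset$, $S(I_5)=\{5,5\}$, and $S(I_7^a)=S(I_7^b)=\{7,5\}$ (or the appropriate small multisets).

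Next I would run the reduction \emph{backwards}. Un-identifying a vertex (the inverse operation, ``splitting'' a vertex into two joined through a new $4$-face) changes $S$ in only a bounded number of ways, and a careful case analysis of how a $4$-face can be inserted shows that each backward step transforms the multiset $S$ according to a short explicit list of local moves: it can add a new $5$-face, or lengthen an existing face by a bounded amount while possibly splitting off a $5$-face, etc. Starting from the four irreducible values of $S$ and applying these moves, the set of reachable multisets $S$ with $\sum S\le 500$ — indeed with $\sum S$ bounded by the much smaller quantity the moves actually allow — is finite and can be enumerated. The claim is that this enumeration collapses to exactly the list $\{7,5\}$, $\{6,5,5\}$, $\{5,5,5,5\}$, $\{5,5\}$, and $\emptyset$ (the last only for $I_4$ itself, since no backward move applies without leaving that graph fixed). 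The two auxiliary conclusions come along for free: each backward step adds exactly one vertex and one edge and one $4$-face to the count while never destroying a $4$-face, and each of $I_5, I_7^a, I_7^b$ already has at least seven $4$-faces, so $G$ has at least seven $4$-faces; representativity at least $2$ is preserved under the inverse operation and holds for the four base graphs, while $I_4$ is handled directly.

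The main obstacle is controlling the backward step precisely enough. The forward reduction is clean because identifying opposite vertices of a $4$-face is a single well-defined operation, but its inverse is not unique: a given $4$-critical graph may arise from several different smaller ones, and when we split a vertex we must track not only how $S$ changes but also that the split is \emph{realizable} inside the torus and keeps the graph triangle-free and $4$-critical. Concretely, I expect the heart of the argument to be a lemma stating that if $G$ reduces to $G'$ by identifying opposite vertices of a $4$-face, then $S(G)$ is obtained from $S(G')$ by one of finitely many moves, each of which changes $\sum S$ by a bounded amount and never decreases the number of $4$-faces; establishing the exact finite list of moves, and verifying that the reachable set from the four irreducible graphs is exactly the five multisets claimed, is the delicate part and is presumably where the theory of~\cite{trfree4} (bounding face sizes and using the restriction on non-contractible $4$-cycles, which for the torus is automatic by the preceding discussion) does the real work of keeping the search space finite. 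Once that lemma is in hand, Theorem~\ref{thm-main} follows by a finite check.
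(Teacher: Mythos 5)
Your proposal follows essentially the same route as the paper: induct by reducing to the computer-classified irreducible graphs, and control how the multiset $S$ changes under the inverse of the $4$-face reduction via a finite list of local moves, with the theory of~\cite{trfree4} (the sets $\mathcal{S}_{4,k}$) bounding the possible ``amplifications'' and Lemma~\ref{4FaceDec} giving the count of $4$-faces. The lemma you anticipate as the delicate heart of the argument is precisely the paper's Lemma~\ref{AmpFaceSet} (splitting followed by amplification), proved via Lemmas~\ref{lemma-unredu}, \ref{lemma-facrit}, and \ref{AmpS4}; the a priori bound $\sum S(G)\le 500$ is never actually needed.
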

Let us remark that all the multisets of face lengths mentioned in Theorem~\ref{thm-main} are realized by some 4-critical triangle-free toroidal
graph (in fact, infinitely many except for the last case).

\subsection{Reductions of $4$-faces and irreducible graphs}

The results surveyed above make it clear that $4$-faces play an important role in $4$-critical triangle-free graphs.
In particular, Theorem~\ref{Thomassen} implies that every $4$-critical
triangle-free graph drawn in the torus has a $4$-face;
and actually, we can show that the $4$-faces cover all vertices of the graph.
\begin{lemma}\label{4FaceCover}
If $G$ is a 4-critical graph drawn in torus such that every triangle is non-contractible, then every vertex of $G$ is incident with a 4-face.  
\end{lemma}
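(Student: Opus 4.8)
The plan is to assume for contradiction that some vertex $v$ of $G$ is incident with no $4$-face, and then to derive a proper $3$-colouring of $G$, contradicting $4$-criticality.

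First I would record some easy preliminaries. A face of length $3$ has a facial walk that is a $3$-cycle (there are no loops or parallel edges), and as the boundary of a disc it is contractible, contradicting the assumption that every triangle of $G$ is non-contractible; hence every face of $G$ has length at least $4$, and the hypothesis on $v$ then says that every face incident with $v$ has length at least $5$. Also $G$ is non-planar: a planar $G$ would be triangle-free by the previous remark, hence $3$-colourable by Grötzsch's theorem~\cite{grotzsch1959}, contradicting $\chi(G)=4$. Finally, $4$-criticality gives $\deg_G(v)\ge 3$ and $2$-connectivity, and $G-u$ is $3$-colourable for every vertex $u$ while $G$ itself is not.

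The key step is to show that every contractible $4$-cycle $Q$ of $G$ bounds a $4$-face. Writing $\Delta$ for a closed disc bounded by $Q$, let $G_1$ be the subgraph of $G$ drawn in $\Delta$ (with $Q$) and $G_2$ the subgraph drawn outside the interior of $\Delta$ (with $Q$), so $G_1\cap G_2=Q$. Since $G$ is non-planar, $G_1$ is a proper subgraph, hence $3$-colourable; if the interior of $\Delta$ contains a vertex then $G_2$ is also a proper subgraph, hence $3$-colourable. Now $G_1$ is triangle-free (any triangle of $G_1$ would be contractible in $G$) and is drawn in a disc with outer boundary the $4$-cycle $Q$, so by the standard extension result for $3$-colourings of triangle-free plane graphs with a precoloured face of length at most four, a $3$-colouring of $G_2$ restricted to $Q$ extends over $G_1$; gluing gives a proper $3$-colouring of $G=G_1\cup G_2$, a contradiction. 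Hence the interior of $\Delta$ is empty (no chord of $Q$ can be present either, as it would create a contractible triangle), i.e.\ $Q$ bounds a $4$-face. In particular, applying the contrapositive of Theorem~\ref{Thomassen} to the non-$3$-colourable graph $G$ produces a contractible cycle of length at most $4$, which — there being no contractible triangle — is a contractible $4$-cycle, so $G$ does have a $4$-face.

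The remaining step, which I expect to be the main obstacle, is to upgrade this to a $4$-face at \emph{every} vertex; by the previous paragraph it suffices to produce a contractible $4$-cycle through the given vertex $v$. I would attempt this by a local argument from criticality: fix a $3$-colouring $c$ of $G-v$, which necessarily uses all three colours on $N(v)$ (else it extends to $v$), and use Kempe-chain exchanges together with $2$-connectivity, the bound $\deg_G(v)\ge 3$, and the length-$\ge 5$ faces around $v$ to locate two neighbours of $v$ sharing a common neighbour, equivalently a $4$-cycle through $v$, or else a short contractible cycle near $v$ that can replace $Q$ above. The delicacy here is that unrestricted Kempe swaps around $v$ tend to produce \emph{odd} cycles through $v$ rather than $4$-cycles, so the embedding has to enter essentially — e.g.\ by fixing one face $f_i$ incident with $v$, examining the path bounding $f_i$ opposite to $v$, and analysing how $3$-colourings of $G-v$ behave along it under recolouring. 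Isolating the right configuration and verifying that it yields a contractible $4$-cycle is where the real work lies.
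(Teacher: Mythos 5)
Your first two paragraphs are sound: the observation that every contractible $(\le\!5)$-cycle must bound a face is exactly Corollary~\ref{Separations}, and the existence of at least one $4$-face via Theorem~\ref{Thomassen} is also how the paper begins. But the lemma's entire content is the claim that \emph{every} vertex lies on a $4$-face, and your third paragraph does not prove it --- you sketch a hoped-for Kempe-chain argument and then concede that isolating the right configuration ``is where the real work lies.'' That is a genuine gap, not a deferred verification, and the direction you propose is unlikely to close it: a $3$-colouring of $G-v$ and Kempe exchanges give you two-coloured paths between neighbours of $v$, hence cycles through $v$ of uncontrolled length and homotopy class; nothing in that machinery forces a \emph{contractible $4$-cycle} through $v$, and the $(\ge\!5)$-faces around $v$ offer no leverage. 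Indeed, any purely local argument at $v$ is suspect, since locally the picture at $v$ is compatible with girth-$5$-like behaviour.

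The paper's proof works globally, by induction on $|V(G)|$ via deflation of a $4$-face \emph{away from} $v$. Take a minimal counterexample $G$ with bad vertex $v$; it has a $4$-face $v_1v_2v_3v_4$ with $v\notin\{v_1,\dots,v_4\}$. Any proper $3$-colouring $\psi$ of $G-v$ must make one opposite pair of this $4$-cycle monochromatic, say $\psi(v_1)=\psi(v_3)$. Identify $v_1$ with $v_3$ and let $H$ be a $4$-critical subgraph of the resulting deflation. Since $\psi$ induces a proper $3$-colouring of $H-v$ but $H$ is not $3$-colourable, necessarily $v\in V(H)$; by Lemma~\ref{DefNoTri} the graph $H$ has no contractible triangles, so minimality gives a $4$-face of $H$ at $v$, and Lemma~\ref{4FaceDec} transports that $4$-face back to a $4$-face of $G$ at $v$, a contradiction. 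The two ideas you are missing are precisely this reduction to a smaller critical graph and the colouring argument showing that $v$ survives into it.
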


There is a natural reduction operation on $4$-faces: identifying a pair of opposite vertices on a $4$-face to a single vertex
cannot decrease the chromatic number of the graph.  We say that an embedded graph is \emph{irreducible}
if each such identification creates a triangle.  Theorem~\ref{thm-main} is proved by studying the inverse
process to this reduction, using the irreducible 4-critical triangle-free toroidal graphs as the basic case for an inductive argument.
To carry out this idea, we need an explicit list of such irreducible graphs, which we obtain via computer-assisted enumeration.

\begin{theorem}\label{IrEnum4}
Each irreducible 4-critical triangle-free toroidal graph is isomorphic to one of the graphs depicted in Figure~\ref{figIr}.
\end{theorem}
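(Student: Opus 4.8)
The plan is to reduce the problem to a finite computation in three stages: (i) bound the number of vertices of any irreducible $4$-critical triangle-free graph embedded in the torus by an explicit constant; (ii) turn irreducibility, together with Lemma~\ref{4FaceCover}, into a small set of local rules that such a graph must obey around every $4$-face and every vertex; and (iii) run a branching search that, starting from a single $4$-face and repeatedly applying these rules, enumerates all candidates and checks that only $I_4$, $I_5$, $I_7^a$, and $I_7^b$ survive.

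For stage (i), let $G$ be such a graph with its embedding. Being $4$-critical, $G$ is $2$-connected and has minimum degree at least $3$, and by Lemma~\ref{4FaceCover} (whose hypothesis holds vacuously since $G$ is triangle-free) every vertex of $G$ lies on a $4$-face. Give each face $F$ the charge $\ell(F)-4$ and each vertex $v$ the charge $d(v)-4$; since the torus has Euler characteristic $0$, the total charge is $-4(|V(G)|-|E(G)|+|F(G)|)=0$. All negative charge is carried by degree-$3$ vertices, so the number of degree-$3$ vertices equals $\sum_{\ell(F)\ge 5}(\ell(F)-4)+\sum_{d(v)\ge 5}(d(v)-4)$. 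The first sum is at most $\sum S(G)\le 500$ by~\cite{trfree4}. For the second sum we exploit irreducibility: for each $4$-face $v_1v_2v_3v_4$, identifying $v_1$ with $v_3$ must create a triangle, so $v_1$ and $v_3$ have a common neighbour outside the face, and similarly for $v_2,v_4$; since moreover every vertex---including every high-degree one---lies on a $4$-face, the structure forced around high-degree vertices, combined with the standard non-existence of small reducible configurations in $4$-critical triangle-free graphs, lets one bound $\sum_{d(v)\ge 5}(d(v)-4)$ and hence $|V(G)|$ by an explicit $N$. Squeezing $N$ down to a value for which stage (iii) actually terminates---rather than the astronomically large bound one reads off from $\sum S(G)\le 500$ alone---is where the discharging must be pushed hardest, and is the main obstacle.

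For stage (ii), I would record the forced local patterns: each $4$-face carries the two common-neighbour vertices produced by irreducibility, these vertices in turn lie on $4$-faces, each new $4$-face again produces its own common neighbours, and every opposite pair of a $4$-face must fail to be identifiable without a triangle. This yields a finite grammar: given a partially built embedded graph, any $4$-face or vertex whose local structure does not yet witness the relevant constraint must be completed in one of finitely many ways---either by attaching new vertices and edges or by identifying with vertices already present, always respecting the rotation system. Alongside these rules I would track the necessary conditions triangle-freeness, $2$-connectedness, minimum degree $3$, and the absence of a proper $4$-critical subgraph, any of which lets a partial configuration be discarded at once.

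Stage (iii) is then a depth-first search: begin from one $4$-face (which exists by Lemma~\ref{4FaceCover}); repeatedly choose an unsatisfied local constraint and branch over its finitely many completions; prune a branch as soon as it creates a triangle, violates $2$-connectedness or the degree bound, exceeds $N$ vertices, or otherwise cannot extend to a $4$-critical graph; and when no unsatisfied constraint remains, the partial graph is a closed embedded graph that we test directly for being triangle-free, irreducible, and $4$-critical (the last by verifying that $G$ is not $3$-colourable while $G-e$ is for every edge $e$). The finitely many graphs passing all tests are collected, sorted up to isomorphism of embeddings, and shown to be exactly the four graphs of Figure~\ref{figIr}. Apart from the discharging bound, the remaining delicate points are keeping the search tree from blowing up---which is exactly what the rigidity forced by irreducibility is meant to prevent---and the correctness of the implementation, which I would guard against by re-verifying $4$-criticality and irreducibility of the four output graphs with an independent routine.
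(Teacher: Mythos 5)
Your overall strategy---turn irreducibility plus Lemma~\ref{4FaceCover} into local completion rules and run a pruned branching search verified by computer---is the same in spirit as the paper's. But there are two genuine gaps. First, your stage~(i) does not work and is not how the paper proceeds. On the torus the total charge $\sum_v(d(v)-4)+\sum_F(\ell(F)-4)$ is zero, so your identity only balances degree-$3$ vertices against high-degree vertices and long faces; degree-$4$ vertices contribute nothing, and arbitrarily large $4$-regular quadrangulations of the torus show that no discharging of this kind can bound $|V(G)|$. Your appeal to ``the structure forced around high-degree vertices'' to bound $\sum_{d(v)\ge5}(d(v)-4)$ is not an argument, and even if it were, it would not bound the number of degree-$4$ vertices. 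The paper sidesteps the need for an a priori bound entirely: Lemma~\ref{lemma-outputall} is deliberately \emph{conditional} (``if the call finishes in finite time, then it outputs exactly all irreducible graphs''), and termination is an empirical fact about the run, driven by the linkedness constraint rather than by a proved vertex bound.

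Second, your local rule from irreducibility is wrong, and the error hides the key structural ingredient. Identifying opposite vertices $v_1,v_3$ of a $4$-face creates a triangle only if $G$ contains a \emph{path of length three} $v_1xyv_3$ (a common neighbour would create a parallel edge, not a triangle). Moreover, by Lemma~\ref{DefNoTri} the created triangle is non-contractible, so the $5$-cycle $v_1xyv_3v_2$ must be non-contractible. This is the content of Lemma~\ref{IrLinked}: every $4$-face of an irreducible graph carries two disjoint length-$3$ ``links'' forming, with the $4$-cycle, the base graph (a $K_4$ with a perfect matching subdivided twice) in an essentially unique toroidal embedding. This topological rigidity is what seeds the search, keeps the branching factor down, and makes the empirical termination plausible; without it your grammar starts from the wrong configuration and your search space is much larger. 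Relatedly, your pruning test ``cannot extend to a $4$-critical graph'' needs to be made concrete---the paper's usable criterion is Corollary~\ref{Separations}: any contractible cycle of length at most $5$ that does not bound a face kills the branch.
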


In a followup paper~\cite{dpfuture}, we build upon these results to obtain (again using computer-assisted enumeration) a full characterization of 4-critical toroidal triangle-free graphs.

\section{Preliminaries}

The graphs we consider have no parallel edges
and no loops, while multigraphs are allowed to have both parallel edges and loops.
We always consider graphs associated with a specific drawing on torus. We
explicitly acknowledge this by stating that $G$ is \emph{drawn in torus}. The
drawing $\delta$ of a graph $G$ in a surface $\Sigma$ is defined as a pair of functions
$\delta : V(G) \rightarrow \Sigma$ associating distinct vertices of $G$ with
distinct points in $\Sigma$, and $\delta : E(G) \rightarrow 2^\Sigma$ associating distinct
edges of $G$ with pairwise disjoint open arcs in $\Sigma\setminus \delta(V(G))$ such that for every $uv
\in E(G)$ we have $\overline{\delta(uv)} = \delta(uv) \cup \delta(u) \cup \delta(v)$ (where bar denotes the closure).
We consider two graphs drawn in a surface isomorphic if and only if their
drawings can be transformed into one another by a homeomorphism of the surface.

The arcwise connected components of the surface minus the drawing are called
\emph{faces}. A face is \emph{2-cell}, if it is homeomorphic to an open disk.
The boundary of a 2-cell face is an image of a closed walk called the
\emph{facial walk}.  A closed walk is \emph{facial} if it is a facial walk of
some face.  A 2-cell face is a \emph{k-face} if its facial walk has
length $k$. A drawing is a \emph{2-cell} drawing if all of its faces are 2-cell
and thus have a single facial walk. Generally we will only consider 2-cell
drawings of graphs. 

Let $G$ be a graph and let $C$ be a proper subgraph of $G$. For a positive integer $k$,
we say that $G$ is \emph{$C$-critical for $k$-coloring}
if for every proper subgraph $H \subsetneq G$ such that $C \subseteq H$,
there exists a $k$-coloring of $C$ that extends to a $k$-coloring of $H$,
but not to a $k$-coloring of $G$.  Throughout the rest of the paper, we only consider $3$-colorings,
and thus we will omit the ``for $3$-coloring'' part for brevity.  We need the following basic
fact about critical graphs.

\begin{lemma}\label{SubgrCrit}
Let $G$ be a graph drawn in surface $\Sigma$, let $\Lambda\subseteq\Sigma$ be an open
disk whose boundary traces a closed walk in $G$, let $C$ be the subgraph of $G$ formed by
vertices and edges contained in the boundary of $\Lambda$, and let $G_1$ be the subgraph of $G$ drawn in the closure of
$\Lambda$. If $G$ is 4-critical and $\Lambda$ is not its face, then $G_1$ is
$C$-critical.
\end{lemma}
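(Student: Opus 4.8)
The plan is to cut $G$ along the closed walk bounding $\Lambda$ and play the $4$-criticality of $G$ off against a gluing argument. Write $G_2$ for the subgraph of $G$ drawn in the closure of $\Sigma\setminus\Lambda$. The first step is to verify the clean decomposition $G=G_1\cup G_2$ and $G_1\cap G_2=C$. The boundary $\partial\Lambda$ is precisely the union of the vertex images and edge arcs of the closed walk it traces, so it is a subgraph of the drawing; and since $\Lambda$ is an open disk, $\Sigma\setminus\partial\Lambda$ is the disjoint union of $\Lambda$ and $\Sigma\setminus\overline{\Lambda}$. Hence any vertex of $G$ not lying on $\partial\Lambda$ lies in $\Lambda$ or in $\Sigma\setminus\overline{\Lambda}$, and any edge of $G$ whose arc is not contained in $\partial\Lambda$ has its (open, connected) arc disjoint from $\partial\Lambda$ — arcs are pairwise disjoint and avoid all vertices — and therefore lies entirely in $\Lambda$ or entirely in $\Sigma\setminus\overline{\Lambda}$. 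So every vertex and every edge of $G$ belongs to $G_1$, to $G_2$, or (exactly when it lies on $\partial\Lambda$) to $C=G_1\cap G_2$, as claimed.

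With the decomposition in hand the criticality argument is short. Fix an arbitrary proper subgraph $H\subsetneq G_1$ with $C\subseteq H$; I must exhibit a $3$-coloring of $C$ that extends to a $3$-coloring of $H$ but not of $G_1$. Set $G'\colonequals H\cup G_2$, a subgraph of $G$. Since $H$ omits some vertex or edge of $G_1$ which, because $G_1\cap G_2=C\subseteq H$, does not lie in $G_2$ either, $G'$ is a \emph{proper} subgraph of $G$; as $G$ is $4$-critical, $G'$ has a $3$-coloring $\psi$. Let $\varphi$ be its restriction to $C$. Then $\psi|_H$ shows that $\varphi$ extends to $H$. If $\varphi$ also extended to a $3$-coloring $\psi_1$ of $G_1$, then $\psi_1$ and $\psi|_{G_2}$ agree on $C=G_1\cap G_2$, so together they form a $3$-coloring of $G_1\cup G_2=G$ — impossible, since the $4$-critical graph $G$ is not $3$-colorable. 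Hence $\varphi$ does not extend to $G_1$, and $G_1$ is $C$-critical.

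The hypothesis that $\Lambda$ is not a face of $G$ is used only to avoid degeneracy: were $\Lambda$ a face, its interior would meet no vertex or edge of $G$, forcing $G_1=C$, and then $G_1$ cannot be said to be $C$-critical in the sense of the definition (there is no proper $H$ with $C\subseteq H\subsetneq G_1$). I do not expect a genuine obstacle here; the only part that needs care is the topological bookkeeping of the first step — in particular the point that an edge of $G$ off $\partial\Lambda$ cannot straddle the boundary of $\Lambda$ — after which everything reduces to the definitions and the routine gluing of $3$-colorings along $C$.
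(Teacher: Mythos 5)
Your proof is correct and follows essentially the same route as the paper's: both form $G_2$ as the subgraph drawn in the closure of the complement of $\Lambda$, take a $3$-coloring of the proper subgraph $H\cup G_2$ guaranteed by $4$-criticality, and restrict it to $C$. You merely spell out the decomposition $G=G_1\cup G_2$, $G_1\cap G_2=C$ and the gluing step more explicitly than the paper does.
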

\begin{proof}
Since $\Lambda$ is not a face of $G$, $C$ is a proper subgraph of $G_1$.
Let $G_2$ denote the subgraph of $G$ drawn in the closure of the complement of
$\Lambda$.
Consider any proper subgraph $H\supseteq C$ of $G_1$.  Since $G$ is $4$-critical,
its proper subgraph $H\cup G_2$ has a $3$-coloring $\varphi$; and furthermore,
$\varphi$ does not extend to a $3$-coloring of $G$.  Consequently, the
restriction of $\varphi$ to $C$ does not extend to a $3$-coloring of $G_1$.
Since this holds for every choice of $H$, we conclude that $G_1$ is $C$-critical.
\end{proof}

Gimbel and Thomassen~\cite{gimbel} characterized
plane triangle-free graphs whose outer face is bounded by a $6$-cycle $C$ and some $3$-coloring
of $C$ does not extend to a $3$-coloring of the whole graph.  In terms of $C$-critical graphs,
their result can be stated as follows.

\begin{theorem}[\cite{gimbel}]\label{Outer}
Let $G$ be a triangle-free plane graph with the outer face bounded by
a cycle $C$ of length at most $6$. Then $G$ is $C$-critical if and only if $C$ is
a $6$-cycle, all internal faces of $G$ have length exactly four and $G$ contains
no separating 4-cycle.
\end{theorem}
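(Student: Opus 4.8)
The plan is to prove the two implications separately, working throughout with the set $\mathcal E(H)$ of $3$-colorings of $C$ that extend to a $3$-coloring of $H$, for $C\subseteq H\subseteq G$. Note that $\mathcal E$ is inclusion-reversing in $H$, and that (since it suffices to test maximal proper subgraphs, i.e.\ single edge- and single vertex-deletions off $C$) $G$ is $C$-critical exactly when $\mathcal E(G)$ is a proper subset of the set of all $3$-colorings of $C$ while $\mathcal E(G-e)\supsetneq\mathcal E(G)$ and $\mathcal E(G-u)\supsetneq\mathcal E(G)$ for every $e\in E(G)\setminus E(C)$ and every $u\in V(G)\setminus V(C)$. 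I would use two standard ingredients as black boxes. (A) In a triangle-free plane graph, every $3$-coloring of a facial cycle of length at most $5$ extends to the whole graph (a well-known strengthening of Grötzsch's theorem~\cite{grotzsch1959}). (B) A ``winding'' invariant: with colors in $\mathbb Z_3$, assign to an oriented edge $xy$ of a $3$-coloring $\psi$ the value $\epsilon_\psi(xy)\in\{+1,-1\}$ with $\psi(y)=\psi(x)+\epsilon_\psi(xy)$, and set $w_\psi(W)=\sum_{xy\in W}\epsilon_\psi(xy)$ for an oriented closed walk $W$; then $w_\psi(W)\equiv 0\pmod 3$, so $w_\psi$ vanishes on the boundary of every $4$-face (an even integer in $[-4,4]$ divisible by $3$), and $w_\psi(\gamma)=\sum_F w_\psi(\partial F)$ whenever $\gamma$ bounds a disc containing the faces $F$, with compatible orientations.

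For the ``if'' direction, assume $C=v_1v_2\cdots v_6$ bounds the outer face, every internal face is a $4$-face, and $G$ has no separating $4$-cycle. In the cycle space, $C$ is the sum of the boundaries of all internal faces, so by (B) one gets $w_\psi(C)=0$ for every $3$-coloring $\psi$ of $G$; but the precoloring $\varphi$ of $C$ with $\varphi(v_i)\equiv i\pmod 3$ satisfies $w_\varphi(C)=6$, so $\varphi\notin\mathcal E(G)$ and $\mathcal E(G)$ is proper. For the minimality part I would induct on $|V(G)|$: if $G$ is $C$ plus at most one chord, or $C$ plus one interior vertex joined to $v_1,v_3,v_5$, criticality is checked directly; otherwise $G$ has an interior vertex, and because $G$ has no separating $4$-cycle (so every $4$-cycle bounds a face, and Lemma~\ref{SubgrCrit}-style reasoning yields no proper sub-obstruction), deleting or contracting across a suitable $4$-face produces a smaller graph of the same type, for which induction provides a precoloring witnessing criticality; this witness is transported back to $G$ using (A) to color across the merged face.

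For the ``only if'' direction, let $G$ be $C$-critical, triangle-free, plane, with outer cycle $C$ and $|C|\le 6$. Triangle-freeness forces $|C|\ge4$, and if $|C|\le5$ then by (A) every precoloring of $C$ extends, so $\mathcal E(G)$ is not proper; hence $|C|=6$. Moreover $G$ is $2$-connected: a cut vertex $x$ with $C$ in one side $G_1$ would give $\mathcal E(G)=\mathcal E(G_1)$ (a single precolored vertex always extends in a triangle-free plane graph), contradicting criticality; so every face is bounded by a cycle, and every interior vertex has degree at least $3$ (a degree-$\le2$ interior vertex could be recolored after deletion). If $Q$ were a separating $4$-cycle with outer part $G_{\mathrm{out}}\supseteq C$ and nontrivial inner part $G_{\mathrm{in}}$, then by (A) every $3$-coloring of $Q$ extends into $G_{\mathrm{in}}$, whence $\mathcal E(G)=\mathcal E(G_{\mathrm{out}})$ with $G_{\mathrm{out}}\subsetneq G$, a contradiction; so $G$ has no separating $4$-cycle. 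It remains to show every internal face has length $4$, which is the technical core: I would prove it by discharging, assigning charge $d(v)-4$ to each vertex and $|f|-4$ to each face (total $-8$), and using triangle-freeness, $2$-connectivity, the interior-degree bound, and the fact that the outer face is a hexagon to redistribute charge and locate a reducible configuration that contradicts criticality unless all internal faces are $4$-faces; alternatively one can mimic the original structural induction of Gimbel and Thomassen by analyzing the faces incident with $C$.

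The principal obstacles are the two flagged technical steps: in the forward direction, excluding internal faces of length at least $5$ (the discharging argument), and in the backward direction, the minimality verification. Both are exactly where triangle-freeness and the no-separating-$4$-cycle condition are used in full strength, and where a careless argument leaves the class of disc quadrangulations with hexagonal boundary and no separating $4$-cycle; staying inside that class, and correctly transporting witnessing precolorings across merged faces via (A), is where the real care goes.
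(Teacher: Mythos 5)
Theorem~\ref{Outer} is not proved in the paper at all: it is quoted from Gimbel and Thomassen~\cite{gimbel}, so there is no in-paper argument to compare yours against. Judged on its own terms, your outline gets the easy parts right and does them correctly: the winding-number computation showing that the precoloring $\varphi(v_i)\equiv i\pmod 3$ has winding $6$ while any $3$-coloring of a disc quadrangulation has winding $0$ on the boundary, hence $\mathcal E(G)$ is proper; the elimination of outer cycles of length at most $5$; and the elimination of separating $4$-cycles, both via your black box (A). But note that (A) for facial cycles of length $4$ and $5$ (Aksenov's extension theorem) is exactly the $|C|\le 5$ case of the theorem you are proving and is established in~\cite{gimbel} by the same induction, so as written you are assuming a nontrivial portion of the statement rather than deriving it.

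The two steps you yourself flag are genuine gaps, and they are where the entire content of the theorem lives. In the ``only if'' direction, the claim that every internal face of a $C$-critical graph has length $4$ is left as ``discharging or mimic the original induction'' with no rules and no reducible configurations; the total charge is $-8$ with the hexagonal outer face already contributing $+2$ and boundary vertices of degree $2$ contributing $-2$ each, so a naive count does not exclude a single internal $5$-face, and closing this requires the full structural induction of~\cite{gimbel}. In the ``if'' direction, the minimality verification (``delete or contract across a suitable $4$-face'') is not a proof: contracting or identifying within a quadrangulation can create triangles, parallel edges, or separating $4$-cycles, and can shorten the outer cycle below $6$, so the induction hypothesis need not apply to the resulting graph. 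You would have to specify which $4$-face is collapsed, verify that the class (triangle-free hexagonal quadrangulation with no separating $4$-cycle) is preserved, and show how a witnessing precoloring lifts to $G-e$ for \emph{every} edge $e\notin E(C)$, not just a conveniently chosen one. As it stands the proposal is a correct skeleton with the load-bearing steps missing.
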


Together with Lemma~\ref{SubgrCrit}, this has the following consequence.

\begin{corollary}\label{Separations}
Let $G$ be a 4-critical graph drawn in a surface $\Sigma$ with no contractible triangles. 
Let $\Lambda$ be a subset of $\Sigma$ homeomorphic to an open disk, such that the boundary of $\Lambda$ traces a closed walk $W$ in $G$.
If $|W|\le 5$, then $\Lambda$ is a face of $G$.  If $|W|=6$, then either $\Lambda$ is a face of $G$
or all faces contained in $\Lambda$ are 4-faces. 
\end{corollary}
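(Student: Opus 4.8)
The plan is to assume that $\Lambda$ is not a face and to derive, by induction on $|W|$, that $|W|=6$ and that every face of $G$ contained in $\Lambda$ has length $4$; this yields both assertions, since for $|W|\le 5$ such a conclusion is contradictory, forcing $\Lambda$ to be a face. Let $C$ be the subgraph of $G$ traced by the boundary walk $W$ and let $G_1$ be the subgraph of $G$ drawn in $\overline{\Lambda}$. By Lemma~\ref{SubgrCrit}, $G_1$ is $C$-critical. Since $\overline{\Lambda}$ is a closed disk, $G_1$ is a plane graph whose outer face is bounded by $W$ and whose internal faces are precisely the faces of $G$ contained in $\Lambda$; moreover $G_1$ is triangle-free, since any triangle in $G_1$ would be a closed curve inside the disk $\overline{\Lambda}$ and hence a contractible triangle of $G$.

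If $W$ is a cycle, the statement follows immediately: then $C=W$ is a cycle of length at most $6$ bounding the outer face of the triangle-free $C$-critical plane graph $G_1$, so by Theorem~\ref{Outer} we get that $C$ is a $6$-cycle and that every internal face of $G_1$ has length exactly $4$. Since these internal faces are exactly the faces of $G$ inside $\Lambda$, this is precisely the desired conclusion when $|W|=6$, and a contradiction (hence $\Lambda$ is a face) when $|W|\le 5$.

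It therefore remains to reduce to the case that $W$ is a cycle, and here I would use that $G$, being $4$-critical, has minimum degree at least $3$ and is $2$-connected. If $W$ is not a cycle, let $v$ be a vertex occurring twice on $W$ with the two occurrences at minimum cyclic distance $\ell$; then $2\le\ell\le\lfloor|W|/2\rfloor\le 3$, and the subwalk $W'$ of $W$ between these occurrences has no internal repetition. The case $\ell=3$ is impossible, since $W'$ would then be a triangle lying in the disk $\overline{\Lambda}$ and hence a contractible triangle. So $\ell=2$, i.e., $W'$ traverses a single edge $vt$ twice in succession; since the boundary walk turns around at $t$, the vertex $t$ has degree $1$ in $G_1$, and a short argument from the $C$-criticality of $G_1$ shows that $G_1-t$ is $(C-t)$-critical and is drawn in a disk whose boundary traces a closed walk of length $|W|-2\le 4$ which is not a face of $G$. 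The inductive hypothesis, applied to this smaller disk, forces it to be a face --- a contradiction. (The base cases $|W|\le 3$ are handled directly: $|W|\le 1$ is impossible, $|W|=2$ would force a vertex of $G$ to be a cut vertex, and $|W|=3$ would be a contractible triangle.)

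The step I expect to require the most care is exactly this reduction to a cycle: one has to dispose of all the ways a short closed boundary walk can fail to be a cycle --- repeated vertices, doubled edges, chords through the interior --- and to verify that $C$-criticality survives the deletion of a degree-one ``tip.'' Once $W$ is known to be a cycle, the argument is a direct appeal to Lemma~\ref{SubgrCrit} and Theorem~\ref{Outer}.
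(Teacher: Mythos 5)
Your main line of argument is exactly the one the paper intends: Corollary~\ref{Separations} is presented as an immediate consequence of Lemma~\ref{SubgrCrit} and Theorem~\ref{Outer}, and in the case that $W$ is a cycle your appeal to these two results (together with the observation that $G_1$ is triangle-free because any triangle in the disk would be a contractible triangle of $G$) is complete and correct. You are also right that the degenerate case where $W$ is not a cycle deserves attention---the paper passes over it silently---and your disposal of the subcase $\ell=3$ (a repeated vertex at minimum cyclic distance $3$ yields a contractible triangle inside $\overline{\Lambda}$) is fine.

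Your treatment of the subcase $\ell=2$, however, contains a genuine error. If $W$ contains a spur $v,t,v$ traversing the edge $vt$ twice in succession, it is not true that $t$ has degree $1$ in $G_1$; in fact the opposite holds. A punctured neighbourhood of the spur tip $t$, with the edge $vt$ removed, lies entirely in $\Lambda$, so every edge of $G$ at $t$ other than $vt$ is drawn inside $\Lambda$ and therefore belongs to $G_1$; hence $\deg_{G_1}(t)=\deg_G(t)\ge 3$. (The heuristic ``the boundary walk turns around at $t$, so $t$ has degree one'' is valid for facial walks of $G$ itself, but $\Lambda$ is not a face of $G$, and $G_1$ by definition contains everything of $G$ drawn inside $\Lambda$.) Consequently the deletion-and-induction step you build on this claim never gets started. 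The configuration is nonetheless easy to exclude with the tools you already have in hand: join the two occurrences of $v$ by an arc through $\Lambda$ to obtain a contractible simple closed curve $c$ meeting $G$ only in $v$ and cutting off the spur; the tip $t$ and everything of $G$ attached to it lie inside the disk bounded by $c$, while $G$ has vertices outside that disk (otherwise $G$ would be planar and hence $3$-colorable by Grötzsch's theorem). Thus $v$ would be a cut vertex, contradicting the $2$-connectivity of the $4$-critical graph $G$. The same curve-splitting argument rules out $|W|=2$ directly, and with these replacements your proof closes without any induction on $|W|$.
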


\section{$4$-faces in critical graphs}\label{SecDef}

Let $G$ be a graph drawn in a surface $\Sigma$ with a 4-face $f$ bounded by a cycle $v_1v_2v_3v_4$.
An \emph{$f$-deflation of $G$ (of form $(v_1v_2v_3v_4) \rightarrow (v_2v'v_4)$)}
is the multigraph $H$ obtained from $G$ by identifying $v_1$ with $v_3$ to a new vertex $v'$
within the face $f$ and by suppressing the resulting $2$-faces $v_2v'$ and $v_4v'$.
We say that $H$ is a \emph{deflation} of $G$ if it is an $f$-deflation for some $4$-face $f$ of $G$.
Every $3$-coloring of $H$ corresponds to a $3$-coloring of $G$ obtained by giving $v_1$ and $v_3$ the color of $v'$;
hence, if $G$ is not $3$-colorable, then $H$ is not $3$-colorable either.

Let us remark that the deflation may have parallel edges in case $v_1$ and $v_3$ have a common neigbor
other than $v_2$ and $v_4$.  Nevertheless, since we consider $H$ being drawn in $\Sigma$, we can distinguish
edges that were incident with $v_1$ from edges that were incident with $v_3$ before deflation,
based on which side of the path $v_2v'v_4$ they attach to in $H$.  Consequently, the operation
of deflation is invertible (up to isomorphism of the drawings).

\begin{lemma}\label{DefNoTri}
Let $G$ be a 4-critical graph without contractible triangles drawn in a surface.
Then deflations of $G$ do not contain contractible triangles. Furthermore, if
$G$ is triangle-free, then deflations of $G$ do not contain loops.  
\end{lemma}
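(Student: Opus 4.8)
The plan is to fix a $4$-face $f$ bounded by the cycle $v_1v_2v_3v_4$ and an arbitrary $f$-deflation $H$ of $G$ of form $(v_1v_2v_3v_4)\to(v_2v'v_4)$, and to verify the two assertions separately. The loop assertion is easy: a loop of $H$ must be incident with $v'$, the only vertex created by the identification, and it can only arise from an edge of $G$ joining $v_1$ and $v_3$ (the suppressed edges come in parallel pairs, not as loops); but if $v_1v_3\in E(G)$ then $v_1v_2v_3$ is a triangle of $G$, contradicting triangle-freeness. (This step genuinely uses triangle-freeness rather than just the absence of contractible triangles, since a non-contractible triangle $v_1v_2v_3$ would be permitted and would produce a loop.)

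For the contractible-triangle assertion I would argue by contradiction, assuming $H$ has a contractible triangle $T$. The structural point is that a deflation alters $G$ only inside a closed disk $D$ that is a small neighbourhood of $\overline f$, with $v_1,v_3$ interior to $D$ and $v_2,v_4$ together with germs of all other incident edges lying on $\partial D$; outside $D$, the drawings of $G$ and $H$ coincide. Now split into cases according to how $T$ meets $v'$. If $v'\notin V(T)$, then every edge of $T$ is an edge of $H$ not incident with $v'$, hence (the edges of $H$ avoiding $v'$ are exactly the edges of $G$ avoiding $v_1$ and $v_3$) an edge of $G$ drawn by the very same arc, so $T$ is literally a contractible triangle of $G$ --- contradiction. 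If $T=v'ab$, then the edges $v'a$ and $v'b$ of $H$ come from edges $v_ia$ and $v_jb$ of $G$ with $i,j\in\{1,3\}$ (when $a\in\{v_2,v_4\}$, respectively $b\in\{v_2,v_4\}$, the index $i$, respectively $j$, may be chosen freely). If one may take $i=j$, then $v_iab$ is a triangle of $G$ which is freely homotopic to $T$ --- slide $v_i$ to the position of $v'$ through the face $f$, dragging the two edges $v_ia,v_ib$ along --- and hence is a contractible triangle of $G$, again a contradiction.

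The remaining, and main, case is $a\in N_G(v_1)\setminus N_G(v_3)$, $b\in N_G(v_3)\setminus N_G(v_1)$, and $a,b\notin\{v_2,v_4\}$ (or the same after swapping $v_1$ and $v_3$). Here $v_1,a,b,v_3,v_2$ are five distinct vertices --- crucially, $a$ and $b$ are ``old'' vertices distinct from $v_1,v_2,v_3,v_4$ --- so $v_1abv_3v_2$ and $v_1abv_3v_4$ are $5$-cycles of $G$, which I will call $C'$ and $C''$. I would then invert the deflation inside $D$: the closed disk of $\Sigma$ bounded by the contractible curve $T$ opens up at $v'$, along whichever of the two arcs (the $v_2$-side or the $v_4$-side) of the cycle bounding $f$ lies on the disk side of the rotation at $v'$, into a closed disk of $\Sigma$ bounded by $C'$, respectively $C''$. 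Applying Corollary~\ref{Separations} to that disk shows it must be a face of $G$. But this face is distinct from $f$ and is incident with both edges $v_1v_2$ and $v_2v_3$ (respectively $v_1v_4$ and $v_3v_4$); since these two edges are then consecutive in the rotation at $v_2$ (respectively $v_4$) on both sides, that vertex has degree $2$, contradicting the fact that a $4$-critical graph has minimum degree at least $3$.

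The main obstacle is the topological bookkeeping in this last case: one must justify that inverting the deflation carries the disk bounded by $T$ to a disk bounded by exactly $C'$ or $C''$ --- rather than to a non-disk, or to a longer or non-contractible cycle --- which requires carefully tracking the rotation at $v'$ and the positions of the suppressed edges $v'v_2$ and $v'v_4$ relative to $T$. The rest --- the case split, the homotopies in the easy cases, the degree-$2$ contradiction, and the verification that $C'$ and $C''$ are genuine $5$-cycles --- is routine.
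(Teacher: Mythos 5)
Your proposal is correct and follows essentially the same route as the paper's proof: the loop case reduces to an edge $v_1v_3$ forcing a triangle, and a contractible triangle through $v'$ reduces to a path $v_1abv_3$ in $G$ whose completion through $f$ gives a contractible $5$-cycle, which by Corollary~\ref{Separations} bounds a face and forces $v_2$ (or $v_4$) to have degree $2$. The paper simply takes the $5$-cycle through $v_2$ (the two completions are homotopic across $f$, so both are contractible) rather than tracking which side the disk bounded by $T$ opens onto, but this is a cosmetic difference.
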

\begin{proof}
Let $f$ be a $4$-face of $G$ bounded by a cycle $v_1v_2v_3v_4$ and
let $H$ be an $f$-deflation of $G$ of form $(v_1v_2v_3v_4) \rightarrow (v_2v'v_4)$.
The deflation only creates a loop (at $v'$) if $v_1$ and $v_3$ are adjacent in $G$;
but in that case, $G$ contains a triangle $v_1v_2v_3$.  Hence, if $G$ is triangle-free,
then $H$ has no loops.

Suppose now that $H$ contains a contractible triangle $T$.  Since $G$ does not contain
contractible triangles, the triangle $T$ arose from a path $v_1xyv_3$ of length three in $G$.
Note that $x\neq v_2\neq y$, since $G$ is simple and does not contain contractible triangles.
Since $T$ is contractible, the $5$-cycle $C=v_1xyv_3v_2$ is contractible and bounds
an open disk $\Lambda\subseteq \Sigma$.  By Corollary~\ref{Separations},
$\Lambda$ is a face of $G$.  Consequently, the path $v_1v_2v_3$ is contained
in the boundaries of two faces $f$ and $\Lambda$ of $G$, and thus $v_2$ has degree $2$.
This contradicts the assumption that $G$ is $4$-critical.
\end{proof}

Let $G$ be a triangle-free $4$-critical graph drawn in a surface.
As we observed before, a deflation of $G$ is not $3$-colorable; however, it
is not necessarily $4$-critical.  A graph $F$ is a \emph{reduction} of $G$
if there exists a deflation $H$ of $G$ such that $H$ is triangle-free, $F$ is
a subgraph of $H$, and $F$ is $4$-critical.  By Lemma~\ref{DefNoTri},
$F$ has no loops, and since it is $4$-critical, it has no parallel edges;
hence, unlike deflations, reductions are simple graphs.

A triangle-free 4-critical graph $G$ drawn in a surface is \emph{reducible} if $G$
has a reduction; equivalently, $G$ is reducible if $G$ has a 4-face $f$ such that at least one of
the two possible $f$-deflations of $G$ is triangle-free.  Otherwise,
we say that $G$ is \emph{irreducible}.
We only define terms reducible and irreducible for the class of triangle-free
4-critical graphs. Therefore, when we state that a graph $G$ is (ir)reducible,
we implicitly state that it is also triangle-free and 4-critical. 

It is clear that the number of vertices and edges strictly decreases with every
iteration of the reduction operation. The same is true for the
number of 4-faces.  Let us show this in a slightly more general setting of graphs that may
have non-contractible triangles.
For a graph $G$ drawn in a surface, let $c(G)$ denote the number
of its 4-faces.

\begin{lemma}\label{4FaceDec}
Let $G$ be a 4-critical graph drawn in a surface $\Sigma$, with no contractible triangles.
Let $H$ be a $4$-critical subgraph of a deflation of $G$.  Then $c(H)<c(G)$.
Furthermore, if $C$ is a cycle bounding a $4$-face in $H$, then all vertices of $C$
are incident with $4$-faces in $G$.
\end{lemma}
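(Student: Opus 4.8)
The plan is to analyze exactly what happens to $4$-faces under a single $f$-deflation $H_0$ of $G$, and then observe that passing to a $4$-critical subgraph $H$ of $H_0$ can only destroy further $4$-faces, never create new ones. Let $f$ be the $4$-face of $G$ bounded by $v_1v_2v_3v_4$, and let $H_0$ be the $f$-deflation of form $(v_1v_2v_3v_4)\rightarrow(v_2v'v_4)$, so $v_1,v_3$ are identified to $v'$. First I would set up the correspondence between faces of $G$ and faces of $H_0$: since deflation acts locally inside $f$ (identifying $v_1,v_3$ across $f$ and suppressing the two resulting $2$-faces), every face of $G$ other than $f$ and the faces incident with $v_1$ or $v_3$ is unchanged, and the face $f$ itself disappears. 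The faces of $H_0$ incident with $v'$ are obtained by ``merging'' a face that was incident with $v_1$ with a face that was incident with $v_3$ along the suppressed paths $v_2v'$ and $v_4v'$; such a merged face has length equal to the sum of the two original facial-walk lengths minus $2$ (we lose the two edges $v_iv_2$ or $v_iv_4$ that got suppressed, from one side each), so in particular a merged face has length at least $3+3-2=4$, and it is a $4$-face only if it came from two $3$-faces of $G$ — which is impossible since $G$ has no contractible triangles and these faces are contractible (they are $2$-cell faces bounded by short closed walks), or more carefully, a merged $4$-face would force the two constituents to be a $3$-face and... here one must be slightly careful, but the key point is that a merged face incident with $v'$ is never a $4$-face of $H_0$ unless one of the constituents already fails to be a short face in a way ruled out by triangle-freeness; at worst I will argue that each merged $4$-face of $H_0$ accounts for the loss of $f$ anyway. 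The cleanest bookkeeping: the $4$-faces of $H_0$ are precisely the $4$-faces of $G$ not incident with $v_1$ or $v_3$, plus possibly some new ones incident with $v'$, and $f$ is lost; if some new $4$-face incident with $v'$ appears, it arose by merging two faces of $G$ whose lengths sum to $6$, forcing (by Corollary~\ref{Separations} applied to the relevant contractible closed walks) those faces to themselves be $4$-faces incident with $v_1$ resp.\ $v_3$ that we have not yet counted — so the net count of $4$-faces does not increase beyond losing $f$, giving $c(H_0)\le c(G)-1<c(G)$.

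Next I would pass from $H_0$ to the $4$-critical subgraph $H$. Deleting edges or vertices can only merge faces or enlarge them, never create a shorter $2$-cell face out of longer ones in a way that increases the total count; more precisely, if $H\subsetneq H_0$, then every face of $H$ contains at least one face of $H_0$ in its interior, and a $4$-face of $H$ bounded by a cycle $C$ must actually already be a $4$-face of $H_0$ — indeed, the disk bounded by $C$ in the surface contains only faces of $H_0$, and if it were not a single $4$-face of $H_0$ then by Corollary~\ref{Separations} (using that $H_0$ has no contractible triangles by Lemma~\ref{DefNoTri}) all faces of $H_0$ inside $C$ would be $4$-faces, but a disk of boundary length $4$ triangulated... quadrangulated by $4$-faces with no separating $4$-cycle must be a single face. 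Hence $c(H)\le c(H_0)<c(G)$, which is the first assertion.

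For the ``furthermore'' part, let $C$ be a cycle bounding a $4$-face in $H$. By the argument just given, $C$ also bounds a $4$-face in $H_0$. If that $4$-face of $H_0$ is one of the ``old'' $4$-faces (not incident with $v'$), then its vertices are vertices of $G$ incident with the same $4$-face in $G$, and we are done. If it is incident with $v'$, then $C$ passes through $v'$; but then in $G$ the corresponding closed walk passes through $v_1$ or $v_3$, both of which are incident with the $4$-face $f$ of $G$, and the remaining vertices of $C$ are either $v_2,v_4$ (incident with $f$) or vertices incident with one of the merged constituent faces, which — as shown above — are themselves $4$-faces of $G$. So in every case each vertex of $C$ is incident with a $4$-face of $G$.

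The main obstacle I anticipate is the case analysis for merged faces incident with $v'$: one must rule out cleanly that two ``non-$4$-faces'' of $G$ merge into a $4$-face of $H_0$, and handle the possibility that $v_1$ and $v_3$ share a neighbor, so that the drawing of $H_0$ has parallel edges through $v'$. The right tool throughout is Corollary~\ref{Separations}: any contractible closed walk of length $\le 5$ in $G$ (or in a deflation, which has no contractible triangles by Lemma~\ref{DefNoTri}) must bound a face, and any contractible $6$-walk bounds either a face or a region quadrangulated by $4$-faces — applying this to the walks one gets by ``cutting'' merged faces pins down their structure and forces the length arithmetic to work out.
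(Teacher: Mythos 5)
Your overall strategy---factor the passage from $G$ to $H$ through the full deflation $H_0=G'$ and control $4$-faces at each step---rests on two claims that are both false, and the second failure is exactly the case the lemma is really about. First, the deflation does not ``merge'' a face at $v_1$ with a face at $v_3$: the identification takes place entirely inside the disk $f$, the suppressed bigons are the two halves of $f$ itself, and every face of $G$ other than $f$ survives in $G'$ with the same facial-walk length (with $v_1,v_3$ renamed to $v'$). So there are no ``merged constituent faces'' and no length arithmetic of the form $3+3-2$; one simply has $c(G')=c(G)-1$. Second, and more seriously, it is \emph{not} true that a $4$-face of $H$ bounded by a cycle $C$ is already a $4$-face of $H_0$. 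The critical case is $C=v'xyz$ where the edges $v'x$ and $v'z$ attach to the path $v_2v'v_4$ from \emph{opposite} sides, i.e., one comes from an edge at $v_1$ and the other from an edge at $v_3$ in $G$. Such a $C$ bounds a face of $H$ only because the edges $v_2v'$ and $v_4v'$ of $G'$ were deleted in passing to $H$; it is not a face of $G'$, and in $G$ it does not correspond to a $4$-cycle at all but to a path $v_1xyzv_3$, hence to a closed $6$-walk $v_1xyzv_3v_4$ bounding a disk $\Lambda$ that contains $f$. Your two-step framework cannot see this face, and your appeal to Corollary~\ref{Separations} to rule it out is not available anyway, since that corollary requires a $4$-critical graph with no contractible triangles and $H_0$ is merely non-$3$-colorable.

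The paper's proof works directly with an arbitrary $4$-face $g$ of $H$ and splits into exactly these two cases: if $C$ is (after renaming $v'$) a cycle of $G$, then by Corollary~\ref{Separations} applied to $G$ it bounds a $4$-face of $G$, which handles both assertions; otherwise one forms the $6$-walk above, notes that $\Lambda$ is not a face of $G$ (it contains the edges $v_1v_2$ and $v_3v_2$), and invokes the $|W|=6$ case of Corollary~\ref{Separations} to conclude $\Lambda$ is quadrangulated---this simultaneously shows $x,y,z$ lie on $4$-faces of $G$ and produces a $4$-face of $G$ inside $\Lambda$ distinct from $f$ to assign to $g$, yielding an injective map from $4$-faces of $H$ to $4$-faces of $G$ missing $f$ and hence $c(H)<c(G)$. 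Your final paragraph does anticipate that non-contractible-triangle-freeness and Corollary~\ref{Separations} are the right tools, but without identifying the opposite-sides configuration and its associated $6$-walk, the argument as written does not close.
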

\begin{proof}
Let $G'$ be an $f$-deflation of $G$ of form $(v_1v_2v_3v_4) \rightarrow (v_2v'v_4)$ such
that $H$ is a subgraph of $G'$.  Consider any $4$-face $g$ of $H$, bounded by a cycle $C$.
If $C$ is also a cycle in $G$ (up to renaming of $v'$ to $v_1$ or $v_3$), then $C$ also bounds
a face $g'$ in $G$, and all its vertices are incident with $g'$.

Otherwise, $C=v'xyz$ corresponds to a path $P=v_1xyzv_3$ in $G$ with $\{v_2,v_4\}\cap \{x,z\}=\emptyset$.
Since the edges $v'x$ and $v'z$ attach to the path $v_2v'v_4$ in $G'$ from opposite sides,
$g$ is not a face of $G'$ and by symmetry, we can assume that the edge $v_2v'$ is drawn inside the open disk $g$.
Consequently, the open disk $\Lambda\subseteq \Sigma$ bounded by the closed walk $v_1xyzv_3v_4$ in $G$ contains the edges
$v_2v_1$ and $v_2v_3$, and thus it does not bound a face.  By Corollary~\ref{Separations}, all faces of $G$
inside $\Lambda$ are $4$-faces, and thus all vertices of $C$ are incident with $4$-faces in $G$.
In this case, we let $g'$ be a $4$-face of $G$ contained in $\Lambda$ distinct from $f$.

The natural mapping $g\mapsto g'$ from $4$-faces of $H$ to $4$-faces of $G$ is injective and its image does not
contain $f$, and thus $c(H)<c(G)$.
\end{proof}

We are now ready to prove the claim that vertices of toroidal $4$-critical graphs with no contractible triangles
are covered by $4$-faces.

\begin{proof}[Proof of Lemma~\ref{4FaceCover}]
For contradiction, suppose that there exists a $4$-critical graph $G$ drawn in torus without contractible triangles
containing a vertex $v$ not incident with any $4$-face.   Let us choose such a graph with minimum number of
vertices.

According to Theorem~\ref{Thomassen} and Corollary~\ref{Separations}, $G$ has a 4-face, bounded by a cycle
$v_1v_2v_3v_4$.  Since $v$ is not incident with a $4$-face, we have $v\not\in \{v_1,\ldots, v_4\}$.
Since $G$ is 4-critical, $G-v$ has a 3-coloring $\psi$.
Without loss of generality, we can assume that $\psi(v_1)=\psi(v_3)$.  
Let $H$ denote a 4-critical subgraph of the deflation of $G$ of form
$(v_1v_2v_3v_4) \rightarrow (v_2v'v_4)$.  Since $\psi$ gives a proper 3-coloring
to $H-v$ and $H$ is not 3-colorable, it follows that $v \in V(H)$. 

According to Lemma~\ref{DefNoTri}, all triangles in $H$ are non-contractible.
By the minimality of $G$, we conclude that all vertices of $H$ are incident with $4$-faces,
and in particular, $v$ is incident with a $4$-face of $H$.  However,
then $v$ is incident with a $4$-face of $G$ by Lemma~\ref{4FaceDec}, which is a contradiction.
\end{proof}

\section{Enumeration of irreducible graphs}\label{SecEnu}

In this section, we describe a theoretical basis for enumeration of all
irreducible graphs drawn in torus via a computer program.
The \emph{base graph} $B$ is the graph obtained from $K_4$ by subdividing
edges of its perfect matching twice, with its unique embedding in the torus
that has a $4$-face. Let $f$ denote the 4-face and $C=v_1v_2v_3v_4$ its boundary cycle. We call
the two distinct paths of length 3 internally disjoint from $C$ \emph{links}. Each link
joins a pair of opposite vertices $v_i$ and $v_{i+2}$ in $C$, and together with a
path in $C$ connecting $v_i$ and $v_{i+2}$ forms a non-contractible 5-cycle. 

We say a 4-face $f$ of a graph $G$ drawn in torus is \emph{linked} if there exists a subgraph $H$
of $G$ such that $H$ is a base graph and $f$ is its 4-face. We say a graph $G$
is \emph{linked} if all of its 4-faces are linked. 
Furthermore, let $C = v_1v_2v_3v_4$ be a cycle bounding a 4-face. We say that a pair of
opposite vertices $v_1$ and $v_3$ is \emph{linked} if there exists a path $v_1xyv_3$ such
that $v_1xyv_3v_2$ is a non-contractible 5-cycle. 

\begin{lemma}\label{IrLinked}
Each irreducible graph $G$ drawn in torus is linked and 
contains the base graph as a subgraph.
\end{lemma}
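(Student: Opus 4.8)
The plan is to take an irreducible graph $G$ and an arbitrary $4$-face $f$ of $G$ bounded by a cycle $C=v_1v_2v_3v_4$, and show that $f$ is linked; since $f$ was arbitrary this gives that $G$ is linked, and any one linked $4$-face witnesses that $G$ contains the base graph. The key observation is that irreducibility means: for each of the two pairs of opposite vertices on $C$, identifying them creates a triangle. So I would analyze what "creates a triangle" means for the identification of $v_1$ with $v_3$. Since $G$ is triangle-free, the new triangle in the deflation must use the identified vertex $v'$, hence it comes from a path of length at most $3$ in $G$ between $v_1$ and $v_3$ (a path of length $2$ would give a contractible triangle with one of $v_2,v_4$, contradicting triangle-freeness and $4$-criticality via Corollary~\ref{Separations}, or be $v_1v_2v_3$ itself which lies on $f$; the relevant case is a path $v_1xyv_3$ of length $3$ with $x,y\notin\{v_2,v_4\}$). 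That is exactly a witness that the pair $\{v_1,v_3\}$ is linked, provided the $5$-cycle $v_1xyv_3v_2$ is non-contractible.

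The crux is therefore to argue that the resulting $5$-cycle $v_1xyv_3v_2$ is non-contractible. Here I would invoke Corollary~\ref{Separations}: if $v_1xyv_3v_2$ were contractible it would bound an open disk, which by Corollary~\ref{Separations} must be a face of $G$; but then the path $v_1v_2v_3$ lies on the boundary of two faces (this hypothetical $5$-face and the $4$-face $f$), forcing $\deg(v_2)=2$ and contradicting $4$-criticality. (I should also rule out the degenerate possibility that the closed walk $v_1xyv_3v_2$ is not a cycle, i.e.\ that $x=v_2$, $y=v_2$, or the path revisits a vertex — the triangle-free and simple hypotheses plus Corollary~\ref{Separations} handle these as in the proof of Lemma~\ref{DefNoTri}.) So for each pair of opposite vertices on $C$ we obtain a link: a path of length $3$ forming a non-contractible $5$-cycle with the appropriate side of $C$.

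Finally I would assemble the two links into a base graph. We have links $L_{13}=v_1xyv_3$ and $L_{24}=v_2x'y'v_4$, each internally disjoint from $C$, with $L_{13}$ on one side of $C$ and $L_{24}$ on the other (one comes from the deflation identifying $v_1,v_3$, the other from identifying $v_2,v_4$; note also $L_{13}$ and $L_{24}$ are internally disjoint from each other since they lie in different faces of the subgraph $C$). The subgraph $C\cup L_{13}\cup L_{24}$ is then isomorphic to $K_4$ with a perfect matching subdivided twice, and by the embedding constraints it is drawn in the torus with $f$ as a $4$-face — i.e.\ it is a base graph with $4$-face $f$. Hence $f$ is linked, $G$ is linked, and $G$ contains the base graph.

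The main obstacle I anticipate is the bookkeeping around non-contractibility and degeneracy: making sure the length-$3$ path we extract really yields a genuine (non-degenerate) cycle and that this cycle is non-contractible on the correct side, so that the two links sit on opposite sides of $C$ and the union is embedded as claimed. The topological input for all of this is Corollary~\ref{Separations} (short contractible closed walks bound faces), combined with $4$-criticality to forbid degree-$2$ vertices; the rest is a routine case check mirroring the proof of Lemma~\ref{DefNoTri}.
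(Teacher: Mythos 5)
Your proposal follows essentially the same route as the paper: irreducibility forces each of the two opposite pairs on a $4$-face to be joined by a length-$3$ path forming a non-contractible $5$-cycle (the paper gets non-contractibility by citing Lemma~\ref{DefNoTri} directly, you re-run its argument via Corollary~\ref{Separations}, which is the same thing), and the two links together with the facial $4$-cycle form the base graph.

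One justification in your last paragraph is wrong, though the claim it supports is true and easily repaired. You assert that $L_{13}$ and $L_{24}$ are internally disjoint ``since they lie in different faces of the subgraph $C$,'' with one link ``on each side of $C$.'' The $4$-cycle $C$ has only two faces: the disk $f$ and its complement (a torus minus an open disk), and \emph{both} links lie in the latter, since $f$ is a face of $G$ and so contains no edges of $G$. The correct reason for disjointness is the one the paper uses: a shared internal vertex of $L_{13}$ and $L_{24}$ would be adjacent to one of $v_1,v_3$ and to one of $v_2,v_4$, i.e.\ to two adjacent vertices of $C$, creating a triangle. Also, for the final assertion that $G$ contains the base graph you should note explicitly that $G$ has at least one $4$-face (Theorem~\ref{Thomassen} together with Corollary~\ref{Separations}); otherwise ``$G$ is linked'' would hold vacuously without yielding a base subgraph.
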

\begin{proof}
Let $C=v_1v_2v_3v_4$ be a cycle bounding a $4$-face $f$ of $G$.  Since $G$ is irreducible, identifying $v_1$ with $v_3$
creates a triangle, which is by Lemma~\ref{DefNoTri} non-contracible; hence the pair $v_1$ and $v_3$ is linked, via some path $P_1$.
Similarly, $v_2$ and $v_4$ are linked via soma path $P_2$.  Since $G$ is triangle-free, the two paths are disjoint, and thus
$C\cup P_1\cup P_2$ is the base graph and $f$ is linked.  Since this holds for every $4$-face of $G$, we conclude that $G$ is linked.
Furthermore, by Theorem~\ref{Thomassen} and Corollary~\ref{Separations} $G$ has a $4$-face, and thus it contains the base graph as a subgraph.
\end{proof}

\begin{figure}

\begin{subfigure}{0.3\textwidth}
\centering
\includegraphics[width=120pt]{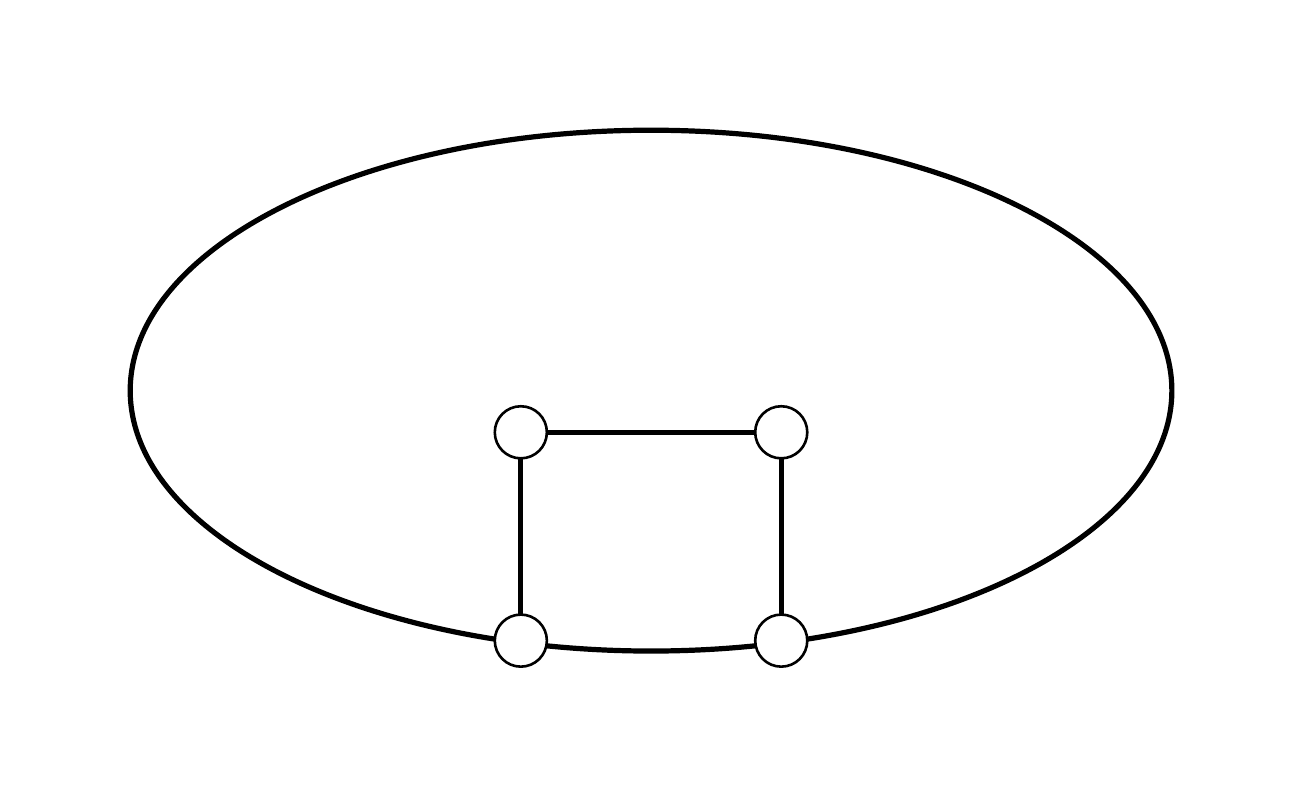}
\caption{One common edge}
\end{subfigure}
\begin{subfigure}{0.3\textwidth}
\centering
\includegraphics[width=120pt]{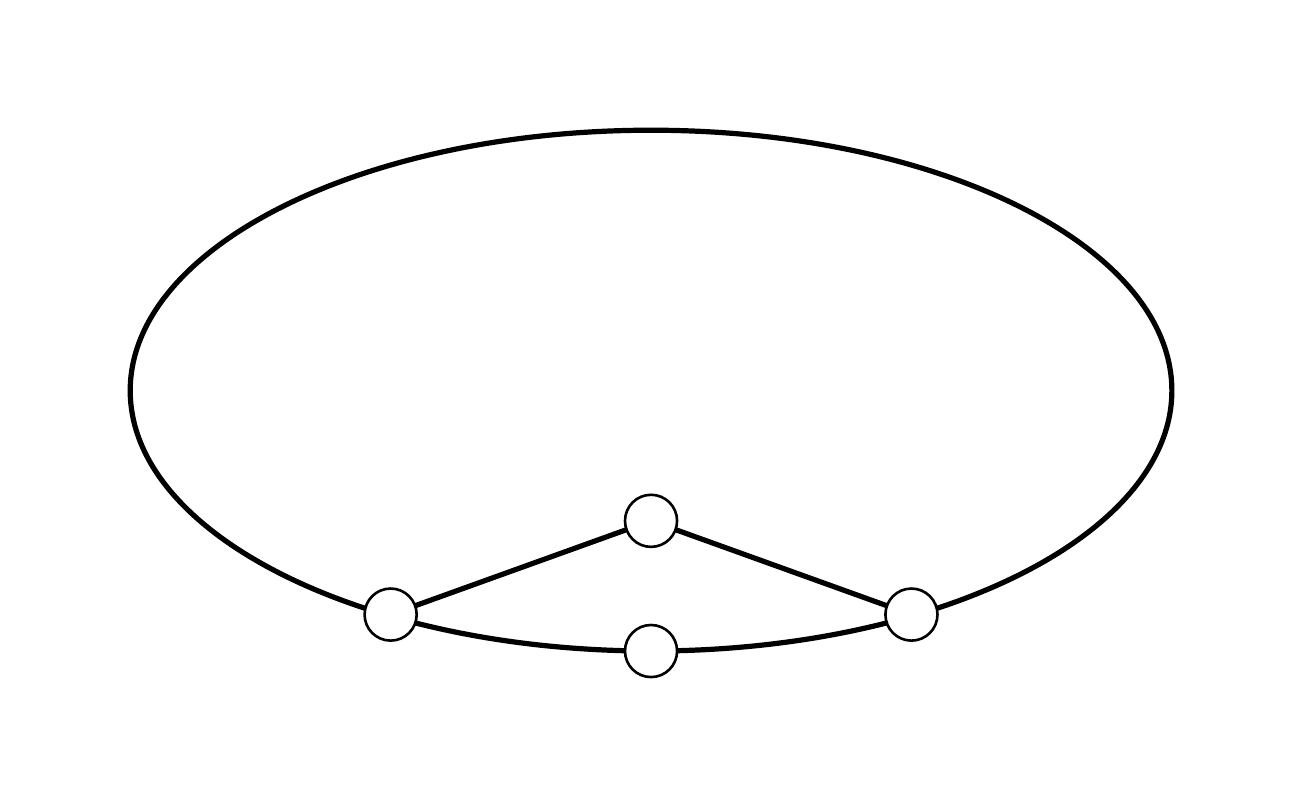}
\caption{Two common edges}
\end{subfigure}
\begin{subfigure}{0.3\textwidth}
\centering
\includegraphics[width=120pt]{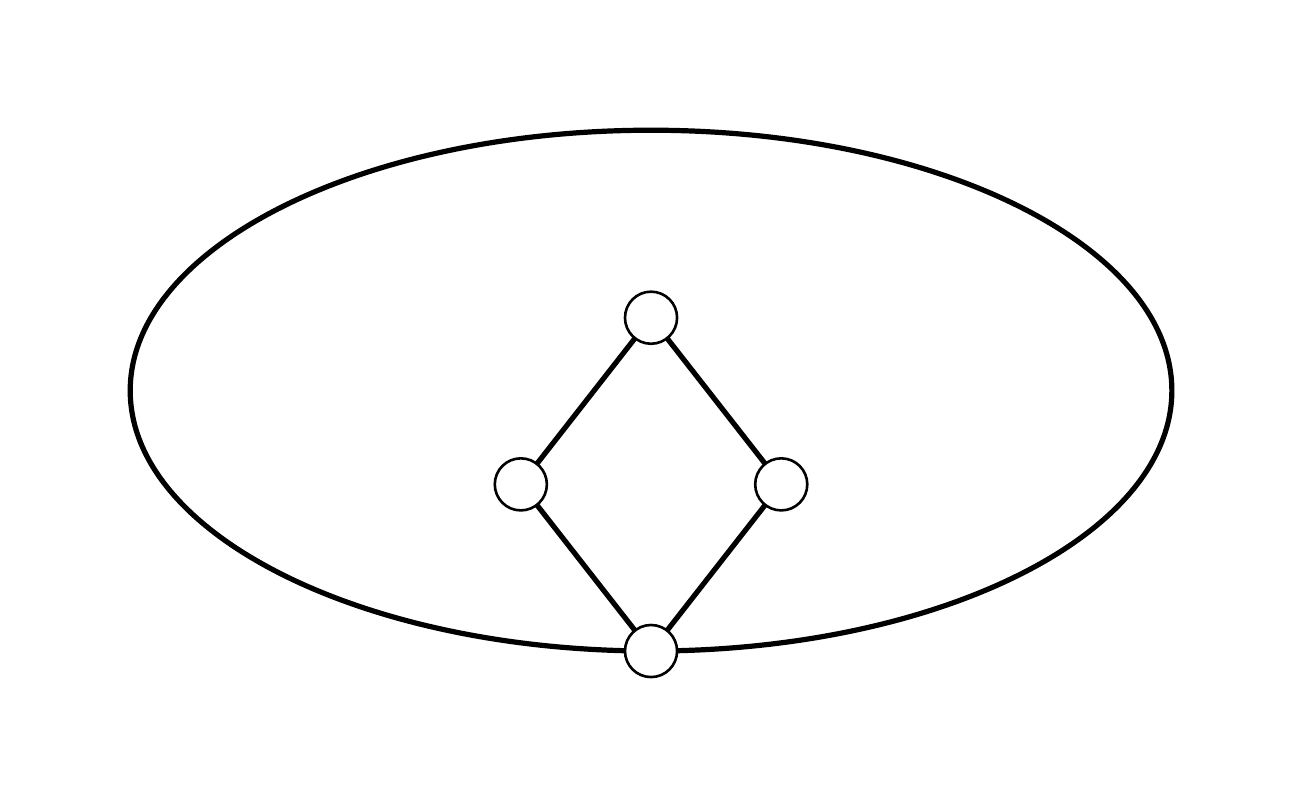}
\caption{One common vertex}
\end{subfigure}

\begin{subfigure}{0.3\textwidth}
\centering
\includegraphics[width=120pt]{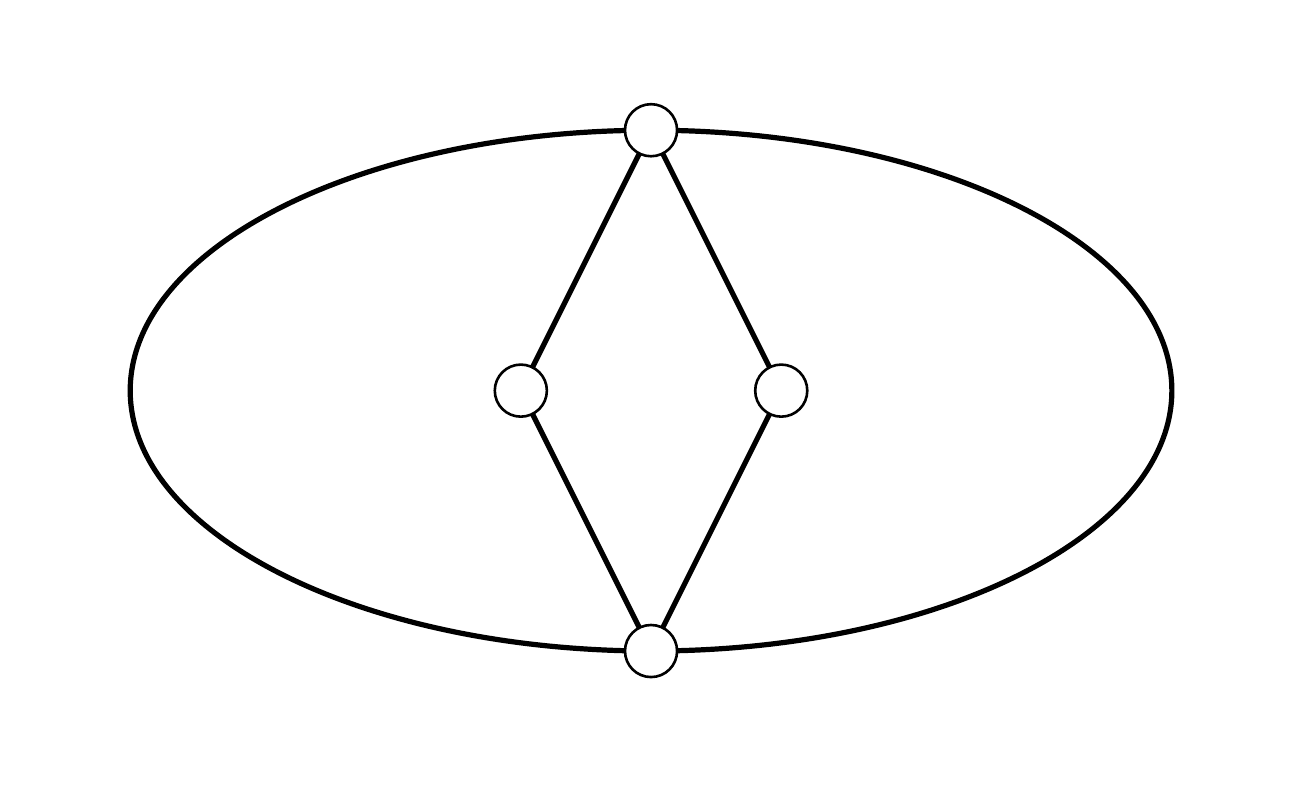}
\caption{Two common vertices}
\end{subfigure}
\begin{subfigure}{0.3\textwidth}
\centering
\includegraphics[width=120pt]{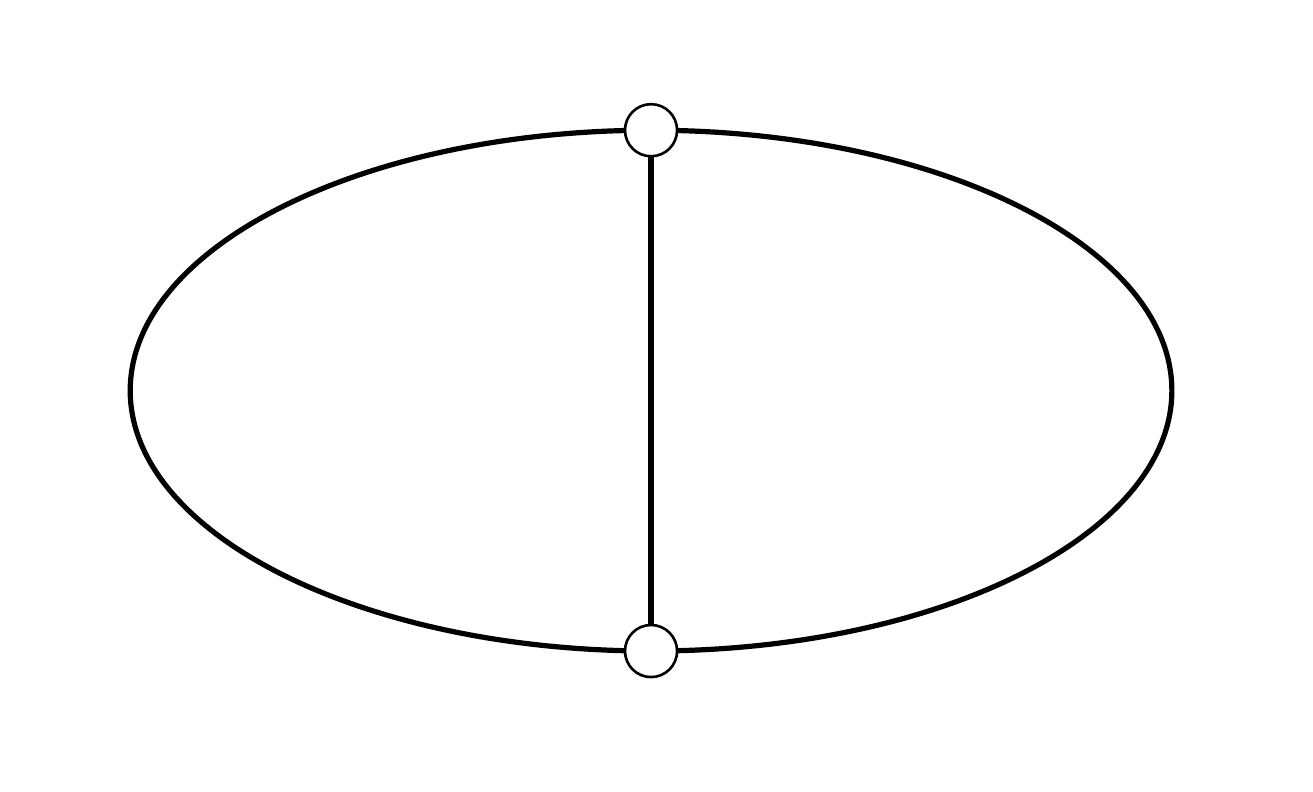}
\caption{Chord}
\end{subfigure}
\begin{subfigure}{0.3\textwidth}
\centering
\includegraphics[width=120pt]{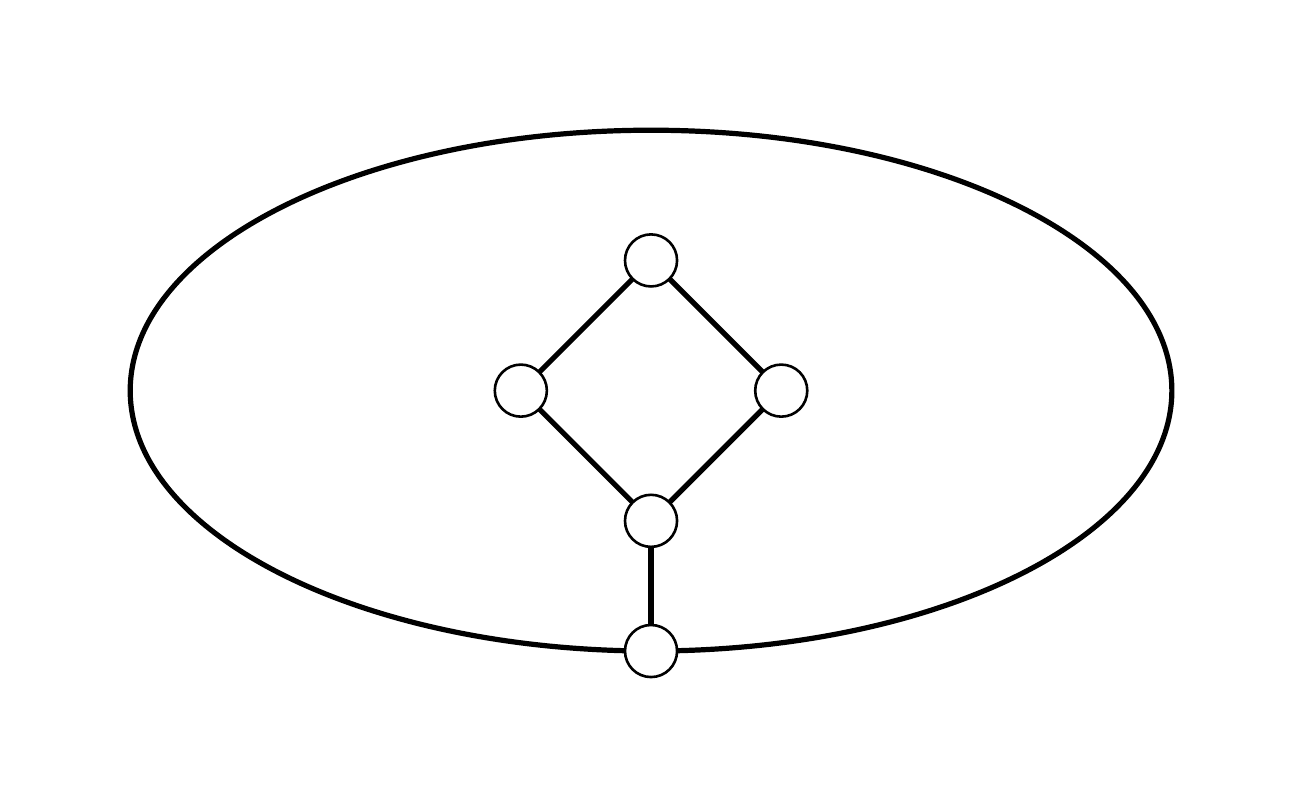}
\caption{No common elements}
\end{subfigure}
\caption{Expansion operations.} 
\label{figExp}
\end{figure}

Let $H$ be a graph with a $2$-cell drawing on torus. Let
$g$ be a face in $H$ of length at least six with facial walk $C$.
For a positive integer $k$, a \emph{$k$-chord} of $g$ is a path $P$ of length $k$ whose ends
are distinct vertices of $C$ and otherwise $P$ is drawn inside $g$.
Perform one of the following operations (see Figure~\ref{figExp} for illustration):
\begin{enumerate}
  \item[(a)] Add a $3$-chord of $g$ joining consecutive vertices of $C$.
  \item[(b)] For a subpath $v_1v_2v_3$ of $C$, add a $2$-chord of $g$ joining $v_1$ with $v_3$.
  \item[(c)] For a vertex $v$ in the boundary of $g$, add a walk $vxyzv$ drawn inside $g$.
  \item[(d)] For non-consecutive vertices $v$ and $z$ of the facial walk of $g$, add two internally disjoint $2$-chords joining $v$ with $z$.
  \item[(e)] Add a $1$-chord of $g$ joining non-consecutive vertices of $C$.
  \item[(f)] For a vertex $v$ in the boundary of $g$, add a $4$-cycle $wxyz$ drawn inside $G$ and an edge $vw$.
\end{enumerate}
If $H'$ is the result of applying one of the previous operations, then we say $H'$ is
an \emph{expansion} of $H$ (inside $g$).  Note that expansion of a graph is always its strict supergraph.
We now show that all irreducible graphs on torus can be obtained from the base graph by expansions.

\begin{lemma}\label{IrExpChar}
Let $G$ be an irreducible graph drawn in torus.  Let $H$ be a proper connected subgraph $H$ of $G$
containing the base graph as a subgraph.  For any face $g$ of $H$ which is not a face in $G$,
there exists an expansion $H'$ of $H$ in $g$ such that $H'\subseteq G$.
\end{lemma}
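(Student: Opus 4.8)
The plan is to peel off the outermost layer of the part of $G$ drawn inside $g$ and to show that this layer must realize one of the expansion operations (a)--(f).

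\emph{Setup.} Let $G_g$ be the subgraph of $G$ drawn in the closure $\overline{g}$ of $g$, and let $W$ be the facial walk of $g$ in $H$; then $W$ traces the boundary of $g$, and since $W\subseteq H\subseteq G$ it also traces a closed walk in $G$. As $g$ is not a face of $G$, some vertex or edge of $G$ lies in the open disk $g$, so $W$ is a proper subgraph of $G_g$. Because $G$ is triangle-free it has no contractible triangle, so applying Corollary~\ref{Separations} to the disk $g$ gives $|W|\ge 6$ (a shorter bounding walk would make $g$ a face of $G$), and moreover, when $|W|=6$, every face of $G$ contained in $g$ is a $4$-face. By Lemma~\ref{SubgrCrit} the graph $G_g$ is $W$-critical, and since $G$ is $4$-critical every vertex of $G_g$ has degree at least $3$; these facts bound the layer we will peel off.

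\emph{First reductions.} If some edge $e$ of $G$ joining two vertices of $W$ is drawn inside $g$, then its endpoints are non-consecutive along $W$ --- otherwise $e$ would be parallel to an edge of $W$, contradicting simplicity --- and $H':=H+e$ is an expansion of type (e) with $H'\subseteq G$. So we may assume $g$ has no such chord. Note also that any configuration produced below consists of edges and vertices of $G_g$ with all its new vertices lying in the open disk $g$, hence outside $V(H)$; so adjoining it to $H$ automatically yields an expansion $H'\subseteq G$.

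\emph{Peeling a layer.} Choose a face $g_0$ of $G$ with $g_0\subseteq g$ incident with an edge of $W$ (for each edge of $W$, the face of $G$ on its $g$-side works). Let $R$ be the part of the facial walk of $g_0$ not on $W$; since $g_0$ is a face of $G$ properly contained in $g$, $R$ is a nonempty union of subwalks drawn in the open disk $g$ with endpoints on $W$, each of length at least $2$ because $g$ has no chord. Let $P$ be a shortest such subwalk, joining $a,b\in V(W)$, and let $\Lambda\subseteq\overline{g}$ be the subdisk cut off by $P$ together with a subwalk of $W$ between $a$ and $b$. Triangle-freeness rules out the short degenerate possibilities (for instance, $P$ cannot be a length-$2$ path between two consecutive vertices of $W$, as that would be a triangle), and a case analysis on $P$ yields an operation: a length-$2$ $P$ between vertices at $W$-distance $2$ gives (b); a length-$3$ $P$ between consecutive vertices gives (a); a closed walk $P$ of length $4$ at a vertex of $W$ gives (c); and $4$-cycles attached to $W$ through a single edge, resp.\ through two internally disjoint $2$-chords to a non-consecutive pair of vertices, give (f), resp.\ (d). When $|W|=6$ the face $g_0$ is a $4$-face, so $R=P$ has length $2$ or $3$ and we land in (b) or (a) at once. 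For $|W|\ge 7$ the remaining cases are handled by pushing the analysis into the subdisk $\Lambda$ --- which by Corollary~\ref{Separations} is itself a face of $G$ when its bounding walk has length at most $5$ and is a union of $4$-faces when that walk has length $6$ --- and, together with triangle-freeness and the minimum-degree bound, this either exhibits one of (a)--(f) attached to $W$ directly or recurses into a smaller subdisk; the recursion terminates because the number of faces of $G$ in the current subdisk strictly decreases (it is convenient to phrase this as an induction on that number, with $g$ replaced by an arbitrary non-face subdisk bounded by a closed walk of $G$, the hypotheses that $H$ is connected and contains the base graph playing no role here).

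\emph{Main obstacle.} The crux is to carry out this case analysis exhaustively: to verify that no matter how $G$ fills the disk $g$, the iteration always terminates at one of the six prescribed configurations, attached to $W$, rather than at some unanticipated structure. The only leverage available is triangle-freeness (which removes the triangular degeneracies: chords and length-$2$ paths between consecutive vertices of $W$) and Corollary~\ref{Separations} (which forces every subdisk of $g$ bounded by a walk of length at most $5$ to be a face and every one bounded by a walk of length $6$ to be a union of $4$-faces). A further delicate point is that the facial walks $W$ and $\partial g_0$ may revisit vertices, and one must check that this never produces a configuration outside (a)--(f).
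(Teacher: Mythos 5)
There is a genuine gap, and it is precisely the step you yourself flag as the ``crux.'' Your plan is to peel off a face $g_0$ of $G$ incident with an edge of the boundary walk $W$ and argue that the part $R$ of $\partial g_0$ inside $g$ realizes one of (a)--(f). But nothing forces $g_0$ to be a $4$-face when $|W|\ge 7$ (Corollary~\ref{Separations} only forces this when $|W|=6$, as you note): $g_0$ could be a $5$-, $6$- or $7$-face of $G$, in which case $R$ can be, e.g., a single path of length $4$ or $5$ between two vertices of $W$, which is not among the operations (a)--(f). Your proposed fix --- recursing into a subdisk $\Lambda$ cut off by $P$ --- does not repair this: the expansion must consist of edges attached to vertices of $H$ (i.e.\ of $W$), whereas a configuration found deeper inside $\Lambda$ attaches to the path $P$, which is not in $H$; so the recursion does not produce an expansion of $H$ in $g$. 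Also, even in the length-$2$ case your analysis is incomplete: operation (b) requires the $2$-chord to join the two ends of a subpath $v_1v_2v_3$ of $W$, and a $2$-chord between vertices farther apart on $W$ is not an expansion.

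The missing idea is Lemma~\ref{4FaceCover}. The paper's proof proceeds as yours does up to the ``no chord of $g$'' reduction, but then takes a vertex $w$ of $G$ inside $g$ with a neighbour $v$ on $W$ (such a $w$ exists by connectivity once chords are excluded) and invokes Lemma~\ref{4FaceCover} to get a $4$-cycle $K$ bounding a $4$-face of $G$ incident with $w$. Because $K$ has only four vertices and there are no chords of $g$, the intersection $K\cap W$ is either empty (giving (f) via $K+vw$), a path of length $0$, $1$ or $2$ (giving (c), (a) or (b) respectively), or two non-adjacent vertices of $K$ (giving (d), with triangle-freeness ruling out the two vertices being consecutive on $W$). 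This makes the case analysis finite and exhaustive in one step, with no recursion, and it is exactly the leverage your argument lacks. If you want to salvage your layering approach you would have to prove something equivalent to Lemma~\ref{4FaceCover} along the way; as written, the termination and exhaustiveness claims are asserted rather than proved.
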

\begin{proof}
Since the drawing of the base graph is 2-cell, the drawing of its connected supergraph $H$ is
also $2$-cell.  Let $C$ be the boundary walk of $g$.  Corollary~\ref{Separations} implies that $|C|\ge 6$.
If some edge $e$ of $G$ with both ends in $V(C)$ is drawn inside $g$, then $H+e\subseteq G$
is an expansion of $H$ according to the construction (e).  Hence, suppose there is no such edge.

Consequently, there exists a vertex $v\in V(G)\setminus V(H)$ drawn inside $g$.
Since $G$ is $4$-critical, it is connected, and thus we can assume that $w$ has a neighbor $v$ in $C$.
By Lemma~\ref{4FaceCover}, $w$ is incident with a $4$-face in $G$; let $K$ be the cycle that bounds this $4$-face.
If $K\cap C=\emptyset$, then $H+K+vw\subseteq G$ is an expansion of $H$ according to the construction (f).

Hence, suppose $K\cap C\neq\emptyset$.  Since we assume that there is no edge inside $g$ joining two vertices
of $C$, we conclude that $K\cap C$ either is a path of length at most $2$ (possibly a single vertex) or consists of two non-adjacent vertices
of $K$.  In the former case, $H+K\subseteq G$ is an expansion of $H$ according to the constructions (a), (b), or (c).
In the latter case, $H+K\subseteq G$ is an expansion of $H$ according to the construction (d).
\end{proof}

As a consequence of Lemmas~\ref{IrLinked} and \ref{IrExpChar}, every irreducible graph on torus
can be created from the base graph by iterated expansions.  However, if we tried to enumerate
the irreducible graphs using just this obsevation, we would run into troubles,
since it is possible to repeat say operation (f) indefinitely, and without a way to check whether
the considered graph has an irreducible supergraph, we would not be able to ensure termination of the process.
To solve this issue, we exploit the fact that irreducible graphs are linked.

Let $H$ be a graph with a $2$-cell drawing on torus.
Let $g$ be an unlinked 4-face of $H$ with facial walk $v_1v_2v_3v_4$, where the pair $v_1$ and $v_3$ is unlinked.
Let $H'$ be a supergraph of $H$ containing a path $P$ of length three
from $v_1$ to $v_3$ such that the $5$-cycle obtained as the concatenation of $P$ with $v_1v_2v_3$ is non-contractible.
If $H'=H+P$, then we say that $H'$ is a \emph{linkage} of $H$.
Note that a linkage of a graph $H$ is obtained from $H$ by adding at most one $(\le\!3)$-chord and
at most three $1$-chords to its faces.  

\begin{lemma}\label{IrBuild}
Let $G$ be an irreducible graph drawn in torus.  Let $H$ be a connected subgraph of $G$
containing the base graph as a subgraph.  For any $4$-face $g$ and an unlinked pair $v_1$ and $v_3$
of its opposite vertices, there exists a link $P$ from $v_1$ to $v_3$ such that
$H+P\subseteq G$.
\end{lemma}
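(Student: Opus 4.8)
The plan is to mimic the structure of the proof of Lemma~\ref{IrExpChar}, but now relative to an unlinked $4$-face rather than a large face. Since $G$ is irreducible, the $4$-face $g$ of $H$, bounded by $v_1v_2v_3v_4$, is also a $4$-face of $G$ (faces of a subgraph can only shrink under adding edges, but a $4$-cycle already bounds a face in $H$ and hence in $G$, using that $g$ is $2$-cell and $H$ is connected). By Lemma~\ref{IrLinked}, $G$ is linked, so the pair $v_1,v_3$ is linked \emph{in $G$}: there is a path $Q=v_1xyv_3$ internally disjoint from the face such that $v_1xyv_3v_2$ is a non-contractible $5$-cycle. The goal is to show that the first three edges of such a link, suitably chosen, together with $H$ stay inside $G$; i.e. we want to find a link $P$ with $H+P\subseteq G$. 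The obvious candidate $P=Q$ does the job \emph{if} $Q$ is short enough and we have no interaction with $H$, so the work is in handling the case where $Q$ meets $H$ in the "wrong" way.

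First I would fix the link $P=v_1xyv_3$ witnessing that $v_1,v_3$ is linked in $G$. If $\{x,y\}\cap V(H)=\emptyset$, then $H+P\subseteq G$ and we are done. Otherwise at least one of $x,y$ lies in $V(H)$; say $x\in V(H)$ (the case $y\in V(H)$ is symmetric by reversing $P$). I would then argue about where in the drawing $P$ sits relative to $H$: the $5$-cycle $v_1xyv_3v_2$ is non-contractible, and $v_1v_2$, $v_2v_3$ are edges of $H$, so $P$ is a $3$-chord starting and ending on $V(H)$. Because $P$ is internally disjoint from the $4$-face $g$, after possibly shortening $P$ at its first point of entry into $V(H)$ we may assume $x\in V(H)$ but $y\notin V(H)$ (if $y\in V(H)$ as well, take the subpath and re-examine — but $P$ has length exactly $3$ and must reach a non-contractible cycle, so the combinatorial possibilities are limited).

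The heart of the argument is then a case analysis on the face $g'$ of $H$ on whose boundary the internal structure of $P$ must lie, combined with Corollary~\ref{Separations} and Lemma~\ref{4FaceCover} to control what can happen inside $g'$. Concretely: if some subpath of $P$ together with a path in the boundary of $g'$ bounds a disk of perimeter at most $5$, that disk is a face, forcing a low-degree vertex and contradicting $4$-criticality; and if the perimeter is $6$, the enclosed faces are all $4$-faces, which lets me find an alternative link realized by edges already "close to" $H$. I expect the main obstacle to be the bookkeeping of these enclosed-disk cases — in particular ruling out that the link $P$ is forced to use vertices of $H$ in a configuration that neither lies along a face boundary of $H$ nor can be re-routed; here the non-contractibility of the $5$-cycle $v_1xyv_3v_2$ is essential, since it prevents $P$ from being "trapped" inside a contractible region of $H$, and this is what ultimately guarantees that a valid link $P$ with $H+P\subseteq G$ exists.
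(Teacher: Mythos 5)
Your proposal starts on the right track but then manufactures a difficulty that is not present in the statement, and leaves that (nonexistent) difficulty unresolved. The lemma only asserts the existence of a link $P$ from $v_1$ to $v_3$ with $H+P\subseteq G$; it does \emph{not} require $P$ to be internally disjoint from $H$, nor to be drawn inside a single face of $H$. Indeed, the paper's definition of a linkage explicitly anticipates that $P$ may share vertices and edges with $H$ (``a linkage of a graph $H$ is obtained from $H$ by adding at most one $(\le 3)$-chord and at most three $1$-chords to its faces''). Once you know that $g$ is a face of $G$ and that $G$ is linked (Lemma~\ref{IrLinked}), any link $P$ witnessing that $v_1,v_3$ are linked in $G$ is a path in $G$, so $H+P\subseteq G$ holds trivially because $H\subseteq G$ as well; and $P\not\subseteq H$ since the pair is unlinked in $H$. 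That is the entire proof. Your second and third paragraphs --- shortening $P$ at its first entry into $V(H)$, analysing the face $g'$ of $H$ containing the interior of $P$, and the enclosed-disk case analysis --- are therefore unnecessary, and as written they are also incomplete: you end by listing expected obstacles rather than resolving them, so even on its own terms the argument is not finished.

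A smaller but genuine issue: your justification that $g$ is also a face of $G$ (``faces of a subgraph can only shrink under adding edges, but a $4$-cycle already bounds a face in $H$ and hence in $G$'') is not valid as stated --- a priori $G$ could have vertices and edges of $G\setminus H$ drawn inside $g$. The correct tool is Corollary~\ref{Separations}: the boundary of $g$ traces a closed walk of length $4\le 5$ in the $4$-critical graph $G$, which has no contractible triangles, so $g$ must be a face of $G$. With that fix and the observation above, the proof closes in three lines.
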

\begin{proof}
By Corollary~\ref{Separations}, $g$ is also a face of $G$.  The graph $G$ is linked by Lemma~\ref{IrLinked}, and thus $G$ contains
a link $P\not\subseteq H$ joining $v_1$ with $v_3$.  Then $H+P\subseteq G$ is a linkage of $H$.
\end{proof}

Let us now focus on the algorithmic side of enumeration.  First, let us introduce a technicality.
Let $\text{Unlink}(H)$ be the set of quadruples $(v_1,\ldots,v_4)$ such that $v_1v_2v_3v_4$ is a $4$-cycle
in $H$ bouding a face and the pair $v_1$ and $v_3$ is not linked.
Let $N$ be a set of faces of $H$.
A sequence $W=(z_1,\alpha_1,\ell_1,z_2,\alpha_2,\ell_2,\ldots, z_k)$ is an \emph{$(H,N,v_1v_2v_3v_4)$-link approximation} if
$z_1=v_1$, $z_k=v_3$, $\{v_2,v_4\}\cap \{z_1,\ldots, z_k\}=\emptyset$,
$\ell_1$, \ldots, $\ell_{k-1}$ are positive integers summing to $3$,
and for $i=1,\ldots, k-1$, either $\alpha_i$ is an edge of $H$ joining $z_i$ with $z_{i+1}$ and $\ell_i=1$,
or $\alpha_i$ is a $(\ge\!6)$-face of $H$ incident with $z_i$ and $z_{i+1}$ and not belonging to $N$
and the distance between $z_i$ and $z_{i+1}$ in the facial walk of $\alpha_i$ is at least $4-\ell_i$.
Let $\text{link}_{H,N}(v_1,v_2,v_3,v_4)$ be the number of $(H,N,v_1v_2v_3v_4)$-link approximations.
Clearly, we have the following.
\begin{observation}\label{obs-ubound}
Let $H$ be a triangle-free graph with a $2$-cell drawing on torus, let $N$ be the set of its faces,
and let $v_1v_2v_3v_4$ be a cycle in $H$ bounding a $4$-face.  Then there exist at most $\text{link}_{H,N}(v_1v_2v_3v_4)$
triangle-free linkages $H'$ of $H$ such that $H'=H+P$ for a link $P$ joining $v_1$ with $v_3$,
all faces of $N$ are also faces of $H'$, and every contractible $(\le\!5)$-cycle in $H'$ bounds a face.
\end{observation}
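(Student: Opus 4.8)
The statement to prove (Observation~\ref{obs-ubound}) says: the number of triangle-free linkages obtained by adding a single link $P$ from $v_1$ to $v_3$ (while keeping faces in $N$ intact and ensuring short contractible cycles bound faces) is at most $\text{link}_{H,N}(v_1v_2v_3v_4)$, the number of link approximations.

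The approach is to exhibit an injection from the set of such linkages into the set of $(H,N,v_1v_2v_3v_4)$-link approximations.

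\textbf{Proof plan.}

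The plan is to construct an injective map from the set of relevant triangle-free linkages $H' = H+P$ into the set of $(H,N,v_1v_2v_3v_4)$-link approximations, which immediately yields the claimed upper bound. So let $H'=H+P$ be such a linkage, where $P=v_1=u_0u_1u_2u_3=v_3$ is a link (a path of length $3$) whose concatenation with $v_1v_2v_3$ is a non-contractible $5$-cycle, and suppose $H'$ is triangle-free, all faces of $N$ remain faces of $H'$, and every contractible $(\le\!5)$-cycle of $H'$ bounds a face. First I would record the structural fact that since $P$ is drawn in torus as a path with endpoints in $H$ and all three internal-or-endpoint edges new, each edge $u_{i-1}u_i$ of $P$ is either already an edge of $H$ or lies inside a single face of $H$. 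More precisely, walking along $P$ from $v_1$ to $v_3$, I would break $P$ at every vertex that already belongs to $V(H)$: this partitions $P$ into consecutive subpaths $Q_1,\dots,Q_{k-1}$, where the internal vertices of each $Q_i$ (if any) lie strictly inside a single face $\alpha_i$ of $H$, and the endpoints $z_1=v_1,z_2,\dots,z_k=v_3$ of these subpaths all lie in $V(H)$. Setting $\ell_i=|E(Q_i)|$ (the length of $Q_i$), we get positive integers summing to $|E(P)|=3$.

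Next I would verify that the sequence $W=(z_1,\alpha_1,\ell_1,z_2,\dots,z_k)$ so obtained — where for a subpath $Q_i$ of length $1$ with both ends in $V(H)$ we take $\alpha_i$ to be the edge $z_iz_{i+1}$, and otherwise $\alpha_i$ is the face of $H$ containing the interior of $Q_i$ — is a genuine link approximation. The conditions $z_1=v_1$, $z_k=v_3$ are immediate; $\{v_2,v_4\}\cap\{z_1,\dots,z_k\}=\emptyset$ follows because $P$ plus $v_1v_2v_3$ being a cycle (hence $v_2\notin V(P)$) and $H'$ being triangle-free (so $v_4$, adjacent to both $v_1$ and $v_3$, cannot be an internal vertex of $P$ either, else we would get a short cycle or triangle — this needs a brief case check). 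When $\ell_i=1$ and $Q_i$ has both ends in $V(H)$ but its edge is new, I need $\alpha_i$ to still qualify: but then $z_iz_{i+1}$ is a $1$-chord drawn inside some face $\beta$ of $H$, and I would instead record $\alpha_i=\beta$ with the face-distance requirement ``distance $\ge 4-\ell_i=3$'' — here is where I must check the endpoints of a new $1$-chord are at facial-walk distance at least $3$, which follows from triangle-freeness and Corollary~\ref{Separations} (a $1$-chord at distance $1$ or $2$ would create a contractible cycle of length $\le 4$ not bounding a face, or a triangle). Similarly, for $\ell_i=2$ I must show the two ends of a $2$-chord inside $\alpha_i$ are at facial-walk distance $\ge 2$, and for $\ell_i=3$ distance $\ge 1$ (trivial), and that $\alpha_i$ is a $(\ge\!6)$-face not in $N$ — the latter because drawing $Q_i$ inside $\alpha_i$ destroys $\alpha_i$ as a face, so $\alpha_i\notin N$ by hypothesis, and $\alpha_i$ has length $\ge 6$ by Corollary~\ref{Separations} since it is not a $4$-face being subdivided (it has a chord) and $H'$ has no small contractible non-facial cycles.

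Finally I would argue injectivity: the map $H'\mapsto W$ is reversible because, as noted in the paragraph on deflations and reinforced by the fact that $H'$ is drawn in the torus, the link $P$ together with its embedding is determined by the sequence of vertices $z_1,\dots,z_k$, the chord-lengths $\ell_i$, and the faces/edges $\alpha_i$ it passes through — given $W$, one recovers $P$ as the concatenation of the unique (up to the data already recorded) chords, hence recovers $H'=H+P$. Thus distinct linkages give distinct approximations, and the count follows. \textbf{The main obstacle} I anticipate is the careful verification of the facial-distance inequalities and the claim $\alpha_i\notin N$, $|\alpha_i|\ge 6$: these rely on combining triangle-freeness with Corollary~\ref{Separations} and the hypothesis that short contractible cycles in $H'$ bound faces, and one must handle the few degenerate configurations (a new edge between consecutive facial vertices, a $2$-chord between vertices at distance $2$, internal vertices of $P$ coinciding with $v_2$ or $v_4$) by exhibiting the forbidden short cycle or triangle in each case.
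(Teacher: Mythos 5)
The paper offers no proof of this observation---it is introduced with ``Clearly, we have the following''---and your argument (splitting the added link $P$ at its vertices lying in $V(H)$ to produce an $(H,N,v_1v_2v_3v_4)$-link approximation, verifying the $(\ge\!6)$-length, $\alpha_i\notin N$, and facial-distance conditions from triangle-freeness together with the hypotheses on preserved faces and short contractible cycles, and noting injectivity of this assignment) is precisely the intended justification. Your proposal is correct and takes essentially the same (implicit) approach as the paper.
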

The function $\text{link}$ is easy to compute by brute-force enumeration, and we use it
in the algorithm as a heuristic estimate for the number of links satisfying the properties listed in Observation~\ref{obs-ubound}
(which would be somewhat slower to determine exactly).

We are now ready to formulate the algorithm to enumerate toroidal irreducible graphs; see Algorithm~\ref{alg-enum}.
The algorithm searches through the space of all irreducible graphs and their subgraphs based on Lemmas~\ref{IrLinked}, \ref{IrExpChar}, and \ref{IrBuild},
adopting a number of pruning mechanism to make it efficient.  In particular, we keep track of the set of faces $N$
into which we already tried to expand, and make sure we do not expand into them again.  We also prune branches
for isomorphic graphs (in their drawing on torus), as well as for graphs containing non-facial contractible $(\le\!5)$-cycles
(which do not have irreducible supergraphs by Corollary~\ref{Separations}).  Finally, we try to link faces with
$\text{link}_{G,N}$ as small as possible and to expand into as short faces as possible to limit the branching factor of the recursion.

\begin{Algorithm}
\begin{algorithmic}
\Function {Enumeration}{Graph $G$, faceset $N$}
\State Mark $G$ as encountered.
\If {$G$ is linked} 
\LIf {$G$ is 4-critical} output $G$ and return \EndLIf
\State $F \colonequals \{f'$ : $ f'$ is face of $G$, $f' \notin N$, $|f'| \geq 6\}$
\LIf {$F = \emptyset$} return \EndLIf
\State $f \colonequals \text{argmin} \{|f'| : f' \in F\}$
\State $\Psi \colonequals$ all triangle-free expansions of $f$ in $G$ with no non-facial contractible $(\le\!5)$-cycles
\LForAll {$H \in \Psi$ such that no isomorphic graph was encountered} 
\Call {Enumeration}{$H$,$N$} 
\EndLFor
\State \Call {Enumeration}{$G$,$N \cup \{f\}$}
\EndIf
\If {$G$ is unlinked}
\State $(v_1,v_2,v_3,v_4) \colonequals \text{argmin} \{\text{link}_{G,N}(K) : K\in \text{Unlink}(G)\}$
\State $n \colonequals \text{link}_{G,N}(v_1,v_2,v_3,v_4)$
\LIf {$n = 0$} return \EndLIf
\State $\Psi \colonequals$ all triangle-free linkages of $v_1$ and $v_3$ in $G$ with no non-facial contractible $(\le\!5)$-cycles
\LForAll {$H \in \Psi$ such that no isomorphic graph was encountered} 
\Call {Enumeration}{$H$,$N$}
\EndLFor
\EndIf
\EndFunction
\end{algorithmic}
\caption{An algorithm to enumerate irreducible graphs on torus.}\label{alg-enum}
\end{Algorithm}

\begin{lemma}\label{lemma-outputall}
Let $B$ be the base graph.  If the call of $\text{\textsc{Enumeration}}(B,\emptyset)$ from Algorithm~\ref{alg-enum}
finishes in finite time,
then it outputs exactly all irreducible graphs drawn in torus.
\end{lemma}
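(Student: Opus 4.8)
The plan is to prove two inclusions: every graph output by the algorithm is irreducible, and every irreducible graph is output. For the first direction, note that the algorithm outputs $G$ only inside the branch where $G$ is linked and $4$-critical (the line ``if $G$ is $4$-critical output $G$''). Every graph ever passed to \textsc{Enumeration} is reachable from $B$ by a sequence of expansions and linkages; since each such operation produces a strict triangle-free supergraph with the same embedding and, by the filtering in $\Psi$, introduces no non-facial contractible $(\le\!5)$-cycle, any output graph is triangle-free, $4$-critical, and linked. It remains to argue that a linked, triangle-free, $4$-critical toroidal graph is in fact irreducible. Here I would invoke Lemma~\ref{IrLinked} contrapositively together with the observation that the algorithm only declares ``linked'' after confirming every $4$-face is linked; but since being linked is exactly the property that rules out a reduction at each $4$-face (a reduction at $f$ exists iff some opposite pair on $f$ is \emph{not} linked, by Lemma~\ref{DefNoTri}), a linked $4$-critical triangle-free graph is irreducible. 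I would spell out this equivalence carefully, as it is the conceptual heart of the ``soundness'' direction.

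For the second direction --- completeness --- let $G$ be an arbitrary irreducible graph drawn in torus. By Lemma~\ref{IrLinked}, $G$ contains the base graph $B$ as a subgraph and is linked. I would set up an induction (or a potential-function argument) showing that there is a chain $B = H_0 \subsetneq H_1 \subsetneq \cdots \subsetneq H_m = G$ of connected subgraphs of $G$, each containing $B$, such that each $H_{i+1}$ is obtained from $H_i$ by an expansion or a linkage, and such that this very chain is traced by some branch of the recursion started at \textsc{Enumeration}$(B,\emptyset)$. The existence of \emph{some} such chain follows from Lemmas~\ref{IrExpChar} and \ref{IrBuild}: as long as $H_i \neq G$, either $H_i$ has an unlinked $4$-face, in which case Lemma~\ref{IrBuild} supplies a linkage step inside $G$, or $H_i$ is linked but has a face $g$ that is not a face of $G$, in which case Lemma~\ref{IrExpChar} supplies an expansion step inside $G$. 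One must check that when $H_i$ is linked but $H_i \ne G$, such a non-face $g$ indeed exists; this is because if every face of $H_i$ were also a face of $G$ then, the drawing being $2$-cell on the torus, $H_i$ and $G$ would have the same embedded structure, forcing $H_i = G$.

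The main obstacle is matching this chain to the algorithm's control flow, i.e., showing the recursion actually \emph{visits} a graph isomorphic to each $H_i$. Two issues arise. First, the algorithm does not try all faces or all unlinked pairs --- it picks a specific one via $\mathrm{argmin}$. So when the algorithm is at a node $H$ isomorphic to $H_i$, it commits to expanding a particular face $f$ (shortest non-$N$ face of length $\ge 6$) or linking a particular unlinked pair; I must argue that $G$ contains \emph{a} supergraph of $H$ of the required form for \emph{that} choice, which again follows from Lemmas~\ref{IrExpChar} and \ref{IrBuild} applied to the specific $f$ or pair the algorithm selected, together with the fact that $G$ is irreducible so that selected $4$-face is genuinely a face of $G$ (Corollary~\ref{Separations}) and the selected $(\ge\!6)$-face of $H$ cannot be a face of $G$. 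Second is the role of $N$: the recursion ``forgets'' a face $f$ by adding it to $N$ and recursing on $(G, N\cup\{f\})$; I must ensure that a face we genuinely need to expand into (because it is not a face of $G$) is never placed into $N$ before we expand it. This holds because the ``$\textsc{Enumeration}(G, N\cup\{f\})$'' branch is explored \emph{after} the branch that expands $f$, and the chain we follow always chooses the expansion branch when $f$ is not a face of $G$ --- so along our distinguished branch, $N$ only ever collects faces that are already faces of $G$, and expansions/linkages preserve those faces (guaranteed by the definitions of expansion and linkage, which only modify a single chosen face). Finally, the isomorphism-pruning (``no isomorphic graph was encountered'') only prunes a branch if an isomorphic graph was \emph{already} reached elsewhere, and since $G$ itself will then be reached (as all branches are explored and isomorphism is an equivalence relation preserving irreducibility), $G$ is still output. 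Assembling these points, and using the hypothesis that the computation terminates so that every branch is fully explored, yields that $G$ is output, completing the proof.
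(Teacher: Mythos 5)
Your overall strategy coincides with the paper's: soundness is immediate (every output is linked, triangle-free and $4$-critical, hence irreducible), and completeness is obtained by following a chain of expansions and linkages inside the target irreducible graph $G$, supplied by Lemmas~\ref{IrExpChar} and \ref{IrBuild}, while checking that along this chain the set $N$ only ever absorbs faces that are genuinely faces of $G$. Up to that point your argument matches the paper's.

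The genuine gap is in your treatment of the isomorphism pruning, which is precisely the delicate part of the proof. When your distinguished chain wants to recurse into $H_{i+1}$ but a graph isomorphic to $H_{i+1}$ was encountered earlier, that earlier visit was a call $\text{\textsc{Enumeration}}(H_{i+1},N')$ for some \emph{other} face-set $N'$, and $N'$ may already contain a face of $H_{i+1}$ that is \emph{not} a face of $G$. The subtree rooted at that earlier call then never expands into that face, so there is no a priori reason why $G$ should be reached within it; your justification that ``$G$ itself will then be reached, as all branches are explored'' silently assumes the earlier copy still aims towards $G$, which is exactly what can fail. The paper closes this gap with a two-part induction hypothesis (its claim (GEN)): it descends the call stack of the earlier call to the last ancestor $(H',N'')$ whose face-set consists only of faces of $G$, observes that at that node the algorithm postponed a face $f$ of $H'$ that is not a face of $G$, and notes that \emph{before} postponing $f$ it had already branched into every expansion of $H'$ inside $f$, including, by Lemma~\ref{IrExpChar}, one aiming towards $G$; the induction hypothesis then guarantees $G$ was already output in that earlier branch. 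Some argument of this shape is indispensable. (A smaller slip: in your first ``issue'' you assert that the selected $(\ge 6)$-face of $H$ cannot be a face of $G$; it can be, and in that case the chain must take the $(H,N\cup\{f\})$ branch rather than an expansion, as you correctly note later.)
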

\begin{proof}
The graphs output by the algorithm are 4-critical, triangle-free, and linked, and thus they are irreducible.
Hence, it suffices to argue that all irreducible toroidal graphs are output.

Let $G_0$ be an irreducible graph drawn in torus.  Let $G\subseteq G_0$ be a connected subgraph such that $B\subseteq G$
and let $N$ be a set of faces of $G_0$ such that each face in $N$ is also a face of $G$.  In this situation, we say that
$(G,N)$ \emph{aims towards $G_0$}.  Suppose that the function \textsc{Enumeration} is called with arguments $(G,N)$
during the execution of $\text{\textsc{Enumeration}}(B,\emptyset)$.  The sequence $(B,\emptyset)=(G_0,N_0)$, $(G_1,N_1)$, \ldots, $(G_k,N_k)=(G,N)$
is the \emph{call stack} of $(G,N)$ if exactly the calls to $\text{\textsc{Enumeration}}(G_0,N_0)$, \ldots, $\text{\textsc{Enumeration}}(G_k,N_k)$
in order started but have not finished yet at this point.

We prove the following claim:
\begin{itemize}
\item[(GEN)] If during the execution of $\text{\textsc{Enumeration}}(B,\emptyset)$, the function \textsc{Enumeration}
is called with arguments $(G,N)$, and $G_0$ is an irreducible graph drawn in torus aimed towards by $(G,N)$, then $G_0$ is
output before this call terminates.  If a graph $H\in \Psi$ such that $(H,N)$ aims towards $G_0$ is considered during this call
and a graph isomorphic to $H$ was encountered before, then $G_0$ was output already before the call to $\text{\textsc{Enumeration}}(G,N)$.
\end{itemize}
The claim of the lemma then follows by (GEN) for $(B,\emptyset)$.

Let us now argue that (GEN) holds.  Let $G$, $N$, and $G_0$ be as in the claim.  We can assume that (GEN) holds for all
calls that end before the termination of the call to $\text{\textsc{Enumeration}}(G,N)$.  The claim is obvious when $G=G_0$,
and thus assume that $G_0$ is a proper subgraph of $G$.  Let us first consider the case that $G$ is linked and the face $f$
selected by the algorithm is also a face of $G_0$.  Then $(G,N \cup \{f\})$ aims towards $G_0$ and $\text{\textsc{Enumeration}}(G,N\cup \{f\})$
is called before the termination of the call to $\text{\textsc{Enumeration}}(G,N)$, and thus $G_0$ is output.
Furthermore, no expansion of $G$ inside $f$ is a subgraph of $G$, and thus the second part of the claim (GEN) holds vacuously. 

Hence, we can assume that either $G$ is linked and the face $f$ selected by the algorithm is not a face of $G_0$,
or $G$ is not linked.  In either case, Lemmas~\ref{IrExpChar} and \ref{IrBuild} ensure that the set $\Psi$ contains
a proper supergraph $H$ of $G$ such that $(H,N)$ aims towards $G_0$.  If a graph isomorphic to $H$ was not encountered before,
then the algorithm calls $\text{\textsc{Enumeration}}(H,N)$ before the termination of the call to $\text{\textsc{Enumeration}}(G,N)$,
and $G_0$ is output.

Hence, suppose that there has been a call to $\text{\textsc{Enumeration}}(H,N')$ for some set $N'$ of faces of $H$ that occured before
the call to $\text{\textsc{Enumeration}}(G,N)$.  In this case, we need to argue that $G_0$ was output already before the call to $\text{\textsc{Enumeration}}(G,N)$.
Since $H$ is a proper supergraph of $G$, we conclude that the call to $\text{\textsc{Enumeration}}(H,N')$
finished before the call to $\text{\textsc{Enumeration}}(G,N)$.  If $(H,N')$ aims towards $G_0$, it follows that $G_0$ was output at that point.
Hence, suppose that $N'$ contains a face of $H$ which is not a face of $G$.  Let $(H',N'')$ be the last element in the call stack
of $(H,N')$ such that $N''$ only contains faces of $G_0$.  Observe that $H'$ is linked, during the call to $\text{\textsc{Enumeration}}(H',N'')$,
the algorithm chose a face $f$ of $H'$ to expand into such that $f$ is not a face of $G_0$, and $(H', N''\cup \{f\})$ is the following
element in the call stack of $(H,N')$.  However, by Lemma~\ref{IrExpChar}, during the call to $\text{\textsc{Enumeration}}(H',N'')$,
the set $\Psi$ contained a graph $H''$ such that $(H'',N'')$ aims to $G_0$, and thus by (GEN) either $G_0$ was output
already at that point or during the call to $\text{\textsc{Enumeration}}(H'',N'')$.
\end{proof}

We implemented Algorithm~\ref{alg-enum}, the implementation can be found at \url{http://iuuk.mff.cuni.cz/~pekarej/papers/torus-irrs/}.
The resulting program finished running in finite time and produced exactly the graphs
depicted in Figure~\ref{figIr}.  By Lemma~\ref{lemma-outputall}, this implies that Theorem~\ref{IrEnum4} holds\footnote{The other co-author wrote an independent program following a slightly different enumeration scheme,
which confirms this result.}.

\section{Properties of $4$-critical toroidal triangle-free graphs}\label{SecFac}

Clearly, by repeated reductions of a $4$-critical toroidal triangle-free graph, we eventually
reach an irreducible graph.  Consequently, properties of irreducible graphs preserved by
the inverse process to reduction hold for all $4$-critical toroidal triangle-free graphs.
Consider a graph $G$ drawn in torus. The \emph{representativity} of $G$ is the
minimum number of intersections of $G$ with a non-contractible
closed curve on torus.

\begin{corollary}\label{Repr1}
Every $4$-critical triangle-free graph drawn in torus has representativity at least $2$.
\end{corollary}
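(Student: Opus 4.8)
The plan is to combine Theorem~\ref{IrEnum4} with the observation that representativity is monotone under the inverse of reduction. First I would verify the base case directly: each of the four irreducible graphs $I_4$, $I_5$, $I_7^a$, $I_7^b$ in Figure~\ref{figIr} has representativity at least $2$. This can be read off from their drawings — since each is triangle-free and $4$-critical, no single vertex or edge meets every non-contractible curve (a representativity-$1$ graph would have a non-contractible loop at a vertex after contracting, which contradicts triangle-freeness together with $4$-criticality via Corollary~\ref{Separations}); more simply, one just inspects the four explicit embeddings. So the base case is a finite check.

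Next I would show that representativity does not decrease when we undo a reduction, i.e.\ if $F$ is a reduction of $G$ then $\text{rep}(G)\ge\text{rep}(F)$, or at least $\text{rep}(G)\ge 2$ whenever $\text{rep}(F)\ge 2$. The deflation operation identifies two opposite vertices $v_1,v_3$ of a $4$-face and suppresses the two resulting $2$-faces; topologically this is a local modification inside a disk (the face $f$ together with a neighborhood of the identification arc), so any non-contractible closed curve meeting $G$ in few points can be pushed off this disk and then transferred to the deflation $H$ without increasing the number of crossings — hence $\text{rep}(H)\ge\text{rep}(G)$ is the wrong direction, so I want the reverse: undoing the deflation (splitting $v'$ back into $v_1,v_3$ across the path $v_2v'v_4$) can only add structure, so a curve realizing $\text{rep}$ in $G$ can be perturbed to meet $H$ at most as often. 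I would make this precise by taking an optimal non-contractible curve $\gamma$ for $G$, homotoping it to avoid the small disk around $f$, and checking that the corresponding curve in $H=$ (the deflation) crosses $H$ at most $|\gamma\cap G|$ times; since $F\subseteq H$, it crosses $F$ at most that often, giving $\text{rep}(F)\le\text{rep}(G)$.

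Then the corollary follows by induction on the number of reductions: repeatedly reducing $G$ terminates (Lemma~\ref{4FaceDec}, since $c(\cdot)$ strictly decreases) at an irreducible graph $G^\ast$, which by Theorem~\ref{IrEnum4} is one of the four graphs of Figure~\ref{figIr}, hence has representativity at least $2$; applying the monotonicity step along the reduction sequence yields $\text{rep}(G)\ge\text{rep}(G^\ast)\ge 2$.

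The main obstacle is the monotonicity step: care is needed because deflation is only defined up to the drawing (parallel edges in $H$ distinguished by which side of $v_2v'v_4$ they attach to), and because $F$ is merely a $4$-critical \emph{subgraph} of the deflation $H$ rather than all of $H$ — passing to a subgraph can only decrease representativity, which is the harmless direction, but one must confirm the curve argument genuinely transfers crossings correctly through the suppression of the two $2$-faces. I expect this to be a short topological argument once set up carefully, and everything else is either a finite verification (the four base graphs) or a direct appeal to the already-proved Lemmas~\ref{4FaceDec} and Theorem~\ref{IrEnum4}.
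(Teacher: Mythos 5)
Your proposal matches the paper's argument: the paper likewise verifies that the four irreducible graphs of Figure~\ref{figIr} have representativity at least $2$ and then transfers a curve witnessing representativity at most $1$ from a minimal counterexample $G$ to a $4$-critical subgraph of a triangle-free deflation, contradicting minimality. The only difference is that the paper works solely with the case $\text{rep}(G)\le 1$ (where the curve automatically avoids the deflated $4$-face, since otherwise it would be contractible) --- exactly your weaker fallback statement --- and thereby sidesteps the more delicate general monotonicity claim $\text{rep}(F)\le\text{rep}(G)$.
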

\begin{proof}
Let $G$ be a $4$-critical triangle-free graph drawn in torus, and suppose for a contradiction
that $G$ has representativity at most $1$.  Choose such a graph $G$ with the smallest number
of vertices.  Let $c$ be a simple non-contractible closed curve intersecting $G$ in at most one vertex.

All the graphs depicted in Figure~\ref{figIr} have representativity at least two; hence, by Theorem~\ref{IrEnum4},
$G$ is not irreducible.  Hence, there exists a $4$-face $f$ of $G$, a triangle-free $f$-deflation $G'$ of $G$,
and a $4$-critical subgraph $H$ of $G'$.  Since $G$ has no loops or parallel edges, the face $f$ is incident with four distinct
vertices, and thus the curve $c$ does not intersect $f$.  Hence, $c$ (possibly shifted slightly in case that $c$ passes
through one of the vertices of $f$ that was identified with the opposite vertex during the $f$-deflation) intersects $G'$
in at most one vertex.  Since $H$ is a subgraph of $G$, $c$ also intersects $H$ in at most one vertex, showing that $H$
has representativity at most $1$.  This contradicts the minimality of $G$.
\end{proof}

Consequently, every triangle-free graph drawn in torus with representativity
at most $1$ is $3$-colorable.  We also obtain the following observation.

\begin{corollary}\label{IrFaceCyc}
Let $G$ be a triangle-free 4-critical graph drawn in torus. Then every face of $G$ is bounded by a cycle. 
\end{corollary}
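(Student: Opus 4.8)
The plan is to argue by induction on the number of vertices of $G$, using the reduction operation as the inductive step, with the irreducible graphs of Theorem~\ref{IrEnum4} as the base case. So let $G$ be a triangle-free $4$-critical graph drawn in torus, chosen with the minimum number of vertices among counterexamples, i.e.\ among such graphs having a face $g$ whose facial walk is not a cycle. First I would dispense with the base case: each of $I_4$, $I_5$, $I_7^a$, $I_7^b$ from Figure~\ref{figIr} can be checked directly (or this was already verified by the enumeration program) to have all faces bounded by cycles, so $G$ is not one of them and hence, by Theorem~\ref{IrEnum4}, is reducible.

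Next I would analyze what it means for a facial walk to fail to be a cycle. Since $G$ is $4$-critical it is connected and $2$-connected (a $1$-cut would give a smaller non-$3$-colorable subgraph after splitting), so the facial walk $W$ of $g$ is a closed walk with no cut-vertex repetitions forced by disconnection; the only way $W$ can fail to be a cycle is that it repeats a vertex $v$ (a ``pinch point'' of the face) or, conceivably, repeats an edge. An edge $e=uv$ traversed twice by a single facial walk would mean $e$ lies in the boundary of only the face $g$; but then removing $e$ does not change $3$-colorability in the relevant way — more cleanly, $e$ would have to be a bridge, contradicting $2$-connectivity. So we may assume $W$ is a closed walk visiting some vertex $v$ at least twice, with all edges distinct. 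Since every vertex of $G$ is incident with a $4$-face by Lemma~\ref{4FaceCover}, and $v$ is the meeting point of (at least) two ``corners'' of $g$, one shows that $g$ itself cannot be a $4$-face (a $4$-face is bounded by a cycle by Corollary~\ref{Separations} applied to the disk it bounds, since a contractible $(\le 5)$-closed-walk bounds a face), so $|W|\ge 6$.

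Now I would invoke the reduction. Since $G$ is reducible, there is a $4$-face $f$ of $G$ bounded by a cycle $v_1v_2v_3v_4$, a triangle-free $f$-deflation $G'$ of $(v_1v_2v_3v_4)\to(v_2v'v_4)$, and a $4$-critical subgraph $H\subseteq G'$; by Lemma~\ref{DefNoTri} $H$ is triangle-free with no loops, and it has fewer vertices than $G$. By minimality, every face of $H$ is bounded by a cycle. The key step — and the one I expect to be the main obstacle — is to trace the non-cyclic face $g$ of $G$ through deflation and restriction to $H$, producing a contradiction: either $g$ survives (up to renaming $v'$) as a face of $G'$, hence a non-cyclic face bounded by a closed walk in the smaller graph, which then must correspond to a non-cyclic face of $H$ unless the deleted vertices/edges of $G'\setminus H$ lie inside it — and here one must rule out that deleting stuff from $G'$ to reach $H$ could ``repair'' the pinch at $v$, which it cannot, since deletion only enlarges faces and a pinch at $v$ is witnessed by two distinct corners at $v$ both of which persist; or the deflation merges $g$ with the face $f$, in which case the disk bounded by the relevant subwalk is controlled by Corollary~\ref{Separations} and one gets a low-degree vertex or a forbidden small contractible cycle. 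The delicate bookkeeping is exactly as in the proof of Lemma~\ref{4FaceDec}: one shows the face of $G$ containing the pinch maps to a face of $H$ containing a corresponding pinch, contradicting that $H$'s faces are cycles. Working out which of $v'$'s two ``sides'' the incident edges attach to, so as to see that the pinch at $v$ is inherited, is the technical heart and where I would spend the care.
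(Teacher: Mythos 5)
Your overall strategy (induction through reductions, with the four irreducible graphs as base case) is not the route the paper takes, and the step you yourself flag as ``the technical heart'' is left undone and does not go through as sketched. The claim that a pinch at $v$ ``is witnessed by two distinct corners at $v$ both of which persist'' under passing from $G$ to a $4$-critical subgraph $H$ of the deflation is not justified and is false in general: $H$ is obtained from $G'$ by deleting vertices and edges, and if the edges of $G'$ lying strictly between the two corners at $v$ (in the local rotation at $v$) are all deleted, the two corners merge into a single corner of the enlarged face and the repetition of $v$ on the facial walk disappears; moreover $v$ may be absent from $H$ altogether, or may be one of the two vertices identified by the deflation, cases your sketch does not treat. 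Carrying this inheritance argument out would require bookkeeping at least as heavy as Lemma~\ref{lemma-unredu} --- whose proof in the paper in fact \emph{uses} Corollary~\ref{IrFaceCyc} --- so it is not ``exactly as in the proof of Lemma~\ref{4FaceDec}'', and there is a real risk of circularity if one is not careful about the order of the lemmas.

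The paper avoids all of this by changing the induction invariant. It first proves Corollary~\ref{Repr1} (every $4$-critical triangle-free toroidal graph has representativity at least $2$) by exactly the reduction-based induction you describe; representativity, unlike cyclicity of a particular facial walk, manifestly cannot increase when passing to a subgraph of a deflation, which is why that induction is painless. Given Corollary~\ref{Repr1}, the present corollary takes two lines: the drawing is $2$-cell (else $G$ is planar and $3$-colorable by Gr\"otzsch' theorem); if a vertex $v$ appeared twice on a facial walk, the simple closed curve running inside that face between the two occurrences and through $v$ would meet $G$ only in $v$, hence would be contractible by Corollary~\ref{Repr1}, making $v$ a cutvertex and contradicting the $2$-connectivity of $4$-critical graphs. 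If you want to keep your approach, the clean fix is to run your induction on the statement ``representativity at least $2$'' and then derive the face statement as above, rather than inducting on the face statement directly.
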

\begin{proof}
The drawing of $G$ is $2$-cell, as otherwise $G$ would be planar and $3$-colorable by Grötzsch' theorem.
If any vertex $v$ appears twice in the facial walk of any face $f$, then consider the simple closed curve $c$
tracing (inside $f$) the part of the facial walk between the two appearances and then crossing $v$.
Since $c$ intersects $G$ only in one vertex, Corollary~\ref{Repr1} implies that $c$ is contractible.
Then $G-v$ is disconnected (with one part contained in the open disk bounded by $c$
and the other part contained in its complement).  However, this is a contradiction, since all $4$-critical graphs are
$2$-connected.
\end{proof}

To analyze the reverse process to reduction in more detail, we now relate a reduction $H$ of a
reducible graph $G$ drawn in torus to a subgraph $G_1$ of $G$ such that the faces of $H$ naturally
correspond to faces of $G_1$.
\begin{lemma}\label{lemma-unredu}
Let $G$ be a reducible graph drawn in torus, and let $H$ be its reduction.
There exists a subgraph $G_1$ of $G$ with a $2$-cell drawing
and a path $u_1vu_2$ contained in the boundary of a face $f_0$
of $G_1$, such that $f_0$ is not a face of $G$ and
\begin{itemize}
\item[\textrm{(i)}] $H$ is obtained from $G_1$ by identifying $u_1$ with $u_2$ to a new vertex $u$ within $f_0$
and suppresing the resulting $2$-face $uv$, or
\item[\textrm{(ii)}] $H$ is obtained from $G_1$ by identifying $u_1$ with $u_2$ to a new vertex $u$ within $f_0$
and deleting both resulting edges between $u$ and $v$, or
\item[\textrm{(iii)}] $v$ has degree two and it is incident with two distinct faces $f_0$ and $f_1$ in $G_1$,
$f_1$ is not a face of $G$, and $H$ is obtained from $G_1$ by contracting both edges incident with $v$.
\end{itemize}
\end{lemma}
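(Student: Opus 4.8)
The plan is to unwind the definition of reduction and track what happens to the drawing. By definition, $H$ arises from $G$ via a two-step process: first take an $f$-deflation $G'$ of $G$ for some $4$-face $f$ bounded by a cycle $v_1v_2v_3v_4$, producing a triangle-free multigraph, and then pass to a $4$-critical subgraph $H\subseteq G'$. The deflation identifies $v_1$ with $v_3$ to a vertex $v'$ inside $f$ and suppresses the two $2$-faces $v_2v'$ and $v_4v'$. Since $G$ is triangle-free, $v_1v_3\notin E(G)$, so no loop is created; and since $H$ is $4$-critical it is simple. The natural candidate for $G_1$ is the subgraph of $G$ consisting of all vertices and edges that "survive" into $H$: concretely, $G_1 = (H \setminus \{v'\}) \cup \{v_1, v_3\}$ with the edges of $H$ at $v'$ distributed back to $v_1$ and $v_3$ according to which side of the path $v_2v'v_4$ they attach to in $G'$ (this is exactly the invertibility of deflation noted before Lemma~\ref{DefNoTri}). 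Set $\{u_1,u_2\}=\{v_1,v_3\}$ and $v=v_2$, with $f_0$ the face of $G_1$ that $f$ becomes — i.e., the face obtained by reinstating $v_2$ (and its incident path $v_1v_2v_3$) inside what was $f$, but \emph{not} reinstating $v_4$ if $v_4$ already has degree at most two in $H$'s ancestry.

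The key subtlety, and where the three cases come from, is the fate of $v_2$ and $v_4$ (the two vertices of $C$ that are \emph{not} identified). In $G'$ the paths $v_2v'$ and $v_4v'$ were suppressed, meaning if $v_2$ had degree $2$ in $G$ it becomes an edge $v_1v_3$ in $G'$; if it had higher degree it remains a genuine vertex. After passing to the subgraph $H$, each of $v_2,v_4$ is either: (a) still a degree-$\ge 3$ vertex present in $H$; (b) present in $H$ but of degree $2$, hence suppressed when we view $H$ as obtained from $G_1$; or (c) absent from $H$ entirely. By symmetry between $v_2$ and $v_4$ we may assume $v_2$ is the "more present" of the two. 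If $v_2\in V(H)$ with the edge $v_2v'$ still present, then $H$ is obtained from $G_1$ by identifying $u_1=v_1$ with $u_2=v_3$ within $f_0$ and suppressing the resulting $2$-face $uv_2$ — this is case (i). If instead $v_2$ is present but \emph{both} edges $v_2v_1, v_2v_3$ survive into $G_1$ while $v_2$ has degree exactly $2$, we would get parallel edges $uv_2$ upon identification, which $H$ cannot have, so $H$ is $G_1$ with both $u$–$v_2$ edges deleted — case (ii). The remaining possibility is that $v_2$ itself is absent/suppressed, so the relevant vertex of degree two is $v_2$ in $G_1$, incident with $f_0$ and the second face $f_1$ that was the other suppressed $2$-face side, and $H$ is $G_1$ with both edges at $v_2$ contracted — case (iii). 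In every case one checks $f_0$ is not a face of $G$: it strictly contains the edges $v_1v_4$ or $v_3v_4$ (or at least $v_4$) from the original $4$-face $f$.

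I would organize the write-up as: (1) fix $G'$, $f$, $C=v_1v_2v_3v_4$, $v'$, and $H\subseteq G'$; (2) define $G_1$ as the pullback of $H$ through the deflation, verify it has a $2$-cell drawing (it is a subgraph of $G$ with the drawing inherited from $G$, possibly after adding back $v_2$ and/or $v_4$ which lie on the boundary of $f_0$); (3) set $u_1=v_1$, $u_2=v_3$, and do the case analysis on the degrees/presence of $v_2$ and $v_4$ in $H$, using $v\in\{v_2,v_4\}$ appropriately; (4) in each case verify that $f_0$ (and $f_1$ in case (iii)) is genuinely not a face of $G$, using that $G$ had $f$ as a $4$-face bounded by a cycle through all four distinct vertices (Corollary~\ref{IrFaceCyc} guarantees faces are bounded by cycles, so the degrees are well-behaved). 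The main obstacle I anticipate is bookkeeping the drawing correctly — ensuring that after "pulling back" $H$ to $G_1$ we really do get a $2$-cell embedding in which $u_1vu_2$ is a subpath of a facial walk $f_0$, and that the inverse-of-deflation correspondence respects the embedding (which is the content of the invertibility remark); handling the degenerate subcases where $v_2$ or $v_4$ has low degree without introducing loops or stray components is the fiddly part, but it is exactly what forces the trichotomy (i)/(ii)/(iii) rather than a single clean statement.
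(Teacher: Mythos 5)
Your overall strategy is the same as the paper's: pull $H$ back through the deflation to a subgraph $G_1$ of $G$ (re-adding the path $u_1vu_2$ and discarding the edge from $u_2$ to the fourth vertex of the $4$-face), then split into cases according to how much of the $4$-face structure survives into $H$. Two preliminary points you skip but need: first, that the identified vertex $u$ actually lies in $V(H)$ (otherwise $H$ would be a proper subgraph of $G$ that is not $3$-colorable, contradicting $4$-criticality of $G$) --- your definition of $G_1$ presupposes this; second, $2$-cellularity of $G_1$ is inherited from the $2$-cell drawing of $H$ ($H$ is non-planar by Grötzsch's theorem), not merely from $G_1$ being a subgraph of $G$.

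The genuine problem is your middle case. The correct trichotomy is: $uv\in E(H)$ (case (i)); $v\in V(H)$ but $uv\notin E(H)$ (case (ii)); $v\notin V(H)$ (case (iii)). Your description of case (ii) --- ``$v_2$ is present, both edges $v_2v_1,v_2v_3$ survive into $G_1$, and $v_2$ has degree exactly $2$'' --- is not a possible instance of case (ii): if $v$ had degree exactly $2$ in $G_1$ with neighbours $u_1,u_2$ only, then deleting both edges $uv$ after the identification would leave $v$ isolated, so $v\notin V(H)$ and you are in fact in case (iii). What case (ii) actually covers is the situation where $v$ survives in $H$ with degree at least $3$ via edges other than $uv$, but the edge $uv$ was dropped when passing from the deflation to its $4$-critical subgraph; as written, your case analysis has no case covering this. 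Separately, in case (iii) the lemma asserts two facts you do not verify: that the two faces of $G_1$ incident with $v$ are distinct (this needs Corollary~\ref{IrFaceCyc}: if $f_0=f_1$ then $u$ would appear twice on a facial walk of $H$), and that $f_1$ is not a face of $G$ (otherwise the path $u_1vu_2$ lies on the boundary of both $f_1$ and the $4$-face $f$ in $G$, forcing $v$ to have degree $2$ in $G$ and contradicting $4$-criticality). Both are needed for the statement as written, and neither is routine bookkeeping.
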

\begin{proof}
Let $G'$ be an $f$-deflation of $G$ of form $(u_1vu_2x) \rightarrow (vux)$ such that $H$ is a subgraph of $G'$.
Since $H$ is $4$-critical and $H-u$ is a subgraph of $G$, we conclude that $u\in V(H)$.
Let $G_0$ be the subgraph of $G$ obtained from $H$ by splitting $u$ back into $u_1$ and $u_2$; we replace $u$ by $u_1$ or $u_2$
as appropriate in the edges incident with $u$ distinct from $uv$ and $ux$, while the edges $uv$ or $ux$ (if present in $H$)
are replaced by both $u_1v$ and $u_2v$, or both $u_1x$ and $u_2x$, respectively.  Let $G_1=G_0+u_1vu_2-u_2x$, and let $f_0$
be the face of $G_1$ such that $f\subset f_0$.  Note that $f_0$ is not a face of $G$, since $u_2x$ is part of the boundary of $f$.
Furthermore, since the drawing of $H$ is $2$-cell, observe that the drawing of $G_1$ is $2$-cell as well.

If $uv\in E(H)$, then $H$ is obtained from $G_1$ as described in (i).  If $uv\not\in E(H)$, but $v\in V(H)$, then
$H$ is obtained from $G_1$ as described in (ii).  If $v\not\in V(H)$, then $v$ has degree two in $G_1$ and $H$ is obtained from $G_1$
by contracting both edges incident with $v$.  Since $H$ is not a subgraph of $G$, both $u_1$ and $u_2$ have degree at least two in $G_1$.
Let $f_0$ and $f_1$ be the two faces of $G_1$ incident with $v$.  If $f_0=f_1$, then $u$ appears twice in the boundary walk of the corresponding
face of $H$, which contradicts Corollary~\ref{IrFaceCyc}.  Hence, the two faces are distinct.  If $f_1$ were a face of $G$,
then the path $u_1vu_2$ would be contained in the boundaries of faces $f_1$ and $f$ of $G$, and thus $v$ would have degree two,
which is a contradiction since $G$ is $4$-critical.  Hence, (iii) holds.
\end{proof}
Observe that there is a natural 1-to-1 correspondence between the faces of $H$ and $G_1$, except for the case (ii),
where the face of $H$ whose interior contains the (deleted) edge $uv$ corresponds to the two faces of $G_1$
incident with $u_1v$ and $u_2v$ distinct from $f_0$.

If $G$ is a graph drawn in a surface $\Sigma$ and $g$ is a $2$-cell face of its subgraph, let $G^0_g$ denote the
subgraph of $G$ drawn in the closure of $g$ and let $C^0_g$ denote the subgraph of $G$ drawn in the boundary of $g$.
In order to derive information about $G$ from the subgraph $G_1$ as in Lemma~\ref{lemma-unredu},
we need to say something about the graphs $G^0_g$ for faces $g$ of $G_1$.
By Lemma~\ref{SubgrCrit}, either $g$ is a face of $G$ or $G^0_g$ is $C^0_g$-critical.
Note that if $C^0_g$ is a cycle, then $G^0_g$ is a plane graph and $C^0_g$ bounds one of its
faces.  We want to view $G_g$ similarly also in the general case that some vertex appears more than once in $C^0_g$. 
This is possible via the following construction and lemma.  Let $\Delta$ be a closed disk and let $\theta:\Delta\to\Sigma$ be a continuous
function such that the restriction of $\theta$ to the interior of $\Delta$ is a homeomorphism with $g$.  Let $G_g=\theta^{-1}(G)$.
Note that the boundary of $\Delta$ traces a cycle $C_g$ in $G_g$ such that $\theta(C_g)=C^0_g$.

\begin{lemma}\label{lemma-facrit}
Let $G$ be a graph drawn in a surface and let $g$ be a $2$-cell face of its subgraph.  If $G$ is $4$-critical and $g$ is not a face of $G$,
then $G_g$ is $C_g$-critical.
\end{lemma}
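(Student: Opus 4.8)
The plan is to reduce Lemma~\ref{lemma-facrit} to Lemma~\ref{SubgrCrit} by transporting the criticality statement along the map $\theta$. The key point is that $G_g$ is essentially $G^0_g$ with the boundary ``unfolded'' into a cycle, so a $3$-coloring of $G_g$ is the same thing as a $3$-coloring of $G^0_g$ together with a choice of how to color each vertex of $C^0_g$ that is visited more than once by the facial walk---except that in $G_g$ such a vertex becomes several distinct vertices of $C_g$, which may be colored independently. So the plan is: (1) show $C_g$ is a cycle bounding a face of the plane graph $G_g$ (immediate from the construction, since $\theta$ restricted to the interior of $\Delta$ is a homeomorphism onto the face $g$, so $G_g$ is drawn in the closed disk $\Delta$ with $C_g$ as its outer boundary); (2) invoke Lemma~\ref{SubgrCrit} in the right way.

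The cleanest route is to apply Lemma~\ref{SubgrCrit} directly to $G$ with $\Lambda = g$: its hypothesis is exactly that $\Lambda$ is an open disk whose boundary traces a closed walk in $G$, $C$ is the subgraph on that boundary, $G_1$ is the part of $G$ in $\overline{\Lambda}$, $G$ is $4$-critical, and $\Lambda$ is not a face of $G$. All of these hold here (the boundary walk of $g$ is a closed walk in $G$ since $g$ is a face of a \emph{subgraph} of $G$, and $g$ is not a face of $G$ by hypothesis), so Lemma~\ref{SubgrCrit} gives that $G^0_g$ is $C^0_g$-critical. It then remains to transfer ``$G^0_g$ is $C^0_g$-critical'' to ``$G_g$ is $C_g$-critical'' across $\theta$. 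For this I would observe that $\theta$ induces a graph homomorphism $G_g \to G^0_g$ that is a bijection on vertices and edges lying in the interior of $\Delta$ and identifies exactly the repeated vertices/edges of the boundary; consequently the $3$-colorings of a subgraph $H' \subseteq G_g$ with $C_g \subseteq H'$ correspond bijectively to the $3$-colorings of $\theta(H')$ (a subgraph of $G^0_g$ containing $C^0_g$) that are constant on each fiber of $\theta$ restricted to $C_g$. Pulling back the definition of $C^0_g$-criticality through this correspondence yields that $G_g$ is $C_g$-critical: given a proper $H' \supseteq C_g$ in $G_g$, its image $\theta(H')$ is a subgraph of $G^0_g$ containing $C^0_g$, it is proper (since $\theta$ is injective off the boundary and $H'$ omits an edge or vertex, which must be an interior one as $C_g \subseteq H'$), so some $3$-coloring of $C^0_g$ extends to $\theta(H')$ but not to $G^0_g$; pulling this coloring back along $\theta$ gives a $3$-coloring of $C_g$ extending to $H'$ but not to $G_g$ (an extension to $G_g$ would push forward to one of $G^0_g$).

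The one technical subtlety---and the step I expect to take the most care---is verifying that a $3$-coloring of $C_g$ obtained by pulling back does push forward to a well-defined coloring of $C^0_g$, i.e.\ that whenever two vertices of $C_g$ map to the same vertex of $C^0_g$ they receive the same color. This is automatic for a coloring \emph{pulled back} from $C^0_g$, but when we pull back a coloring of $\theta(H') \supseteq C^0_g$ and then try to push an extension to $G_g$ forward, we must check the extension is also constant on fibers. It is: any extension to $G_g$ restricts to a coloring of $C_g$ which, being a pullback, is constant on the fibers over $C^0_g$; and since $\theta$ is injective on the interior, there are no further identifications to worry about, so the pushforward to $G^0_g$ is well defined. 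I would spell this out in a sentence or two and then conclude. Everything else is bookkeeping about the map $\theta$, which behaves exactly like the standard ``cut along the boundary of a face'' operation.
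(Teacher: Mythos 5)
Your proposal is correct and follows the same route as the paper: apply Lemma~\ref{SubgrCrit} with $\Lambda=g$ to conclude that $G^0_g$ is $C^0_g$-critical, then transport the witnessing coloring of $C^0_g$ back along $\theta$ to $C_g$, noting that any extension to $G_g$ would push forward to an extension on $G^0_g$. The fiber-constancy check you highlight is exactly the (implicit) content of the paper's final sentence, so your write-up is just a more detailed version of the same argument.
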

\begin{proof}
Let $\theta$ be as in the definition of $G_g$.   Consider a proper subgraph $H$ of $G_g$ with $C_g\subseteq H$.  The graph $G^0_g$ is $C^0_g$-critical
by Lemma~\ref{SubgrCrit}, and thus there exists a $3$-coloring $\psi$ of $C^0_g$ that extends to a $3$-coloring $\varphi$ of $\theta(H)$, but not to a $3$-coloring of $G^0_g$.
Then $\theta\circ\psi$ is a $3$-coloring of $C_g$ that extends to a $3$-coloring $\theta\circ\varphi$ of $H$, but not to a $3$-coloring of $G_g$.
\end{proof}

Triangle-free plane graphs critical with respect to a facial cycle
have been intensively studied.  Let us recall some relevant results.
A graph $G$ embedded in the plane has one unbounded (outer) face;
all other faces of $G$ are \emph{internal}. For a graph $G$ embedded in the
plane let $S(G)$ denote the multiset of lengths of \emph{internal} $(\geq 5)$-faces of
$G$.  Let $\mathcal{G}_{\gamma,k}$ denote the set of all plane graphs $G$ of girth at least
$\gamma$ and with outer face bounded by a cycle $C$ of length $k$ such that $G$ is $C$-critical.
Let $\mathcal{S}_{\gamma,k}$ denote set $\{S(H) : H \in \mathcal{G}_{\gamma,k} \}$. 

Let $S_1$ and $S_2$ be multisets of integers. We say that $S_2$ is a \emph{one-step refinement} of
$S_1$ if there exists $k \in S_1$ and a set $Z \in \mathcal{S}_{4,k} \cup
\mathcal{S}_{4,k+2}$ such that $S_2 = (S_1 \backslash \{k\}) \cup Z$. We say
that $S_2$ is a \emph{refinement} of $S_1$ if it can be obtained from $S_1$ by
a (possibly empty) sequence of one-step refinements. 
Dvořák, Král, and Thomas~\cite{trfree4} proved the following lemma linking the
possible lengths of faces in critical graphs of girths $4$ and $5$.

\begin{lemma}[\cite{trfree4}]\label{AmpS4Formula} 
For every $k \geq 7$, each element of $\mathcal{S}_{4,k}$ other than $\{k-2\}$
is a refinement of an element of $\mathcal{S}_{4,k-2} \cup \mathcal{S}_{5,k}$.
In particular, if $S \in \mathcal{S}_{4,k}$ then the maximum of $S$ is at most
$k-2$, and if the maximum is equal to $k-2$, then $S = \{k-2\}$.
\end{lemma}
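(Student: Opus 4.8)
The plan is to prove, by strong induction on $|V(G)|$, that every $G\in\mathcal{G}_{4,k}$ with $k\ge 7$ (outer cycle $C$) satisfies $S(G)=\{k-2\}$ or $S(G)$ is a refinement of an element of $\mathcal{S}_{4,k-2}\cup\mathcal{S}_{5,k}$; the ``in particular'' statement then follows by a secondary induction on $k$, using the analogous face-length bound for girth-$5$ critical graphs (established in \cite{trfree4}), which in particular keeps the maximum of $\mathcal{S}_{5,k}$ strictly below $k-2$. If $G$ has girth at least $5$ then $G\in\mathcal{G}_{5,k}$, so $S(G)\in\mathcal{S}_{5,k}$ is a refinement of itself via the empty sequence. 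Otherwise $G$ has a $4$-cycle $Q$; since $k\ge7$ we have $Q\neq C$, $Q$ has no chord (a chord would create a triangle), and $G$ has no separating $4$-cycle --- by the $C$-critical analogue of Lemma~\ref{SubgrCrit} (same proof, applied to the precoloring witnessing criticality) the subgraph of $G$ inside such a cycle would lie in $\mathcal{G}_{4,4}$, which is empty by Theorem~\ref{Outer}. Hence the interior of $Q$ is empty and $Q$ bounds a $4$-face.

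The heart of the argument is the claim that, unless $S(G)=\{k-2\}$, some $4$-face of $G$ can be \emph{planar-deflated}: for some opposite pair $v_1,v_3$ of some $4$-face $v_1v_2v_3v_4$, not both of $v_1,v_3$ lie on $C$, and identifying $v_1$ with $v_3$ inside the face and then suppressing the two resulting $2$-faces yields a triangle-free plane graph $G'$ whose outer face is still bounded by the $k$-cycle $C$. Granting this, suppose some $4$-face is planar-deflatable and let $G'$ be as above. If $\psi_0$ is a precoloring of $C$ witnessing the $C$-criticality of $G$, then $\psi_0$ does not extend to $G'$ (an extension would color $v_1$ and $v_3$ alike, hence lift to one of $G$), so a subgraph $H$ of $G'$ that is minimal among those containing $C$ to which $\psi_0$ does not extend is $C$-critical; thus $H\in\mathcal{G}_{4,k}$ and $|V(H)|\le|V(G')|=|V(G)|-1$, and by induction $S(H)=\{k-2\}$ or $S(H)$ is a refinement of some $T\in\mathcal{S}_{4,k-2}\cup\mathcal{S}_{5,k}$. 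In the former subcase we instead apply the induction hypothesis to the strictly smaller critical graph occupying the unique $(\ge5)$-face of $H$.

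It remains to see that $S(G)$ is itself a refinement of $S(H)$, which closes the induction. By the $C$-critical plane analogue of Lemma~\ref{lemma-facrit}, each face $g$ of $H$ that is not a face of $G$ carries a $C_g$-critical subgraph $G_g$ of $G$, and the internal $(\ge5)$-faces of these $G_g$ realize exactly the difference between $S(G)$ and $S(H)$. The one delicate point is the length of $C_g$: it equals $|g|$ when $g$ already appears on passing from $G'$ to the subgraph $H$, and it equals $|g|+2$ when $g$ arose through the identification at $f$ --- there, reading the facial walk of $g$ back in $G$ re-separates it into a walk from $v_1$ to $v_3$ of length $|g|-1$ which is closed up through the two-edge side $v_2$ (or $v_4$) of $f$. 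Hence $S(G_g)\in\mathcal{S}_{4,|g|}\cup\mathcal{S}_{4,|g|+2}$ in every case, and replacing each entry $|g|$ of $S(H)$ by $S(G_g)$ exhibits $S(G)$ as a finite refinement of $S(H)$, hence of $T$. For the ``in particular'' statement, one tracks that a one-step refinement never raises the maximum (each entry $\ell$ is replaced by entries of size at most $\ell-2$, by the face-length bound already known at the smaller parameter, or --- in the boundary case $\ell+2=k$ --- by the induction on $|V(G)|$), while every seed in $\mathcal{S}_{4,k-2}\cup\mathcal{S}_{5,k}$ has maximum at most $k-3$; hence $\max S(G)\le k-2$, and if $\max S(G)=k-2$ then $S(G)$ is not a refinement of any such seed, so $S(G)=\{k-2\}$ by the dichotomy.

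The main obstacle is the structural claim of the second paragraph. When no $4$-face admits a planar-deflation, the obstructing configurations are: a $4$-face with both endpoints of an opposite pair on $C$ (so a path $v_1v_2v_3$ is a $2$-chord of the disk bounded by $C$), or a path $v_1xyv_3$ between opposite vertices of a $4$-face --- which becomes a triangle after deflation, equivalently forms a contractible $5$-cycle $v_1xyv_3v_2$ (or $v_1xyv_3v_4$) in $G$. By the $C$-critical analogues of Corollary~\ref{Separations} and Theorem~\ref{Outer}, such a $5$-cycle must bound a $5$-face, which forces a degree-$2$ vertex and so can occur only when incident with $C$, while a $2$-chord splits $G$ into two critical graphs with strictly shorter outer cycles, whose face structure is controlled by the induction on $k$. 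An elementary but somewhat lengthy case analysis of how these constraints can hold at every $4$-face simultaneously then forces $G$ to be $C$ together with a concentric ring of $4$-faces enclosing a single $(\ge5)$-face of length $k-2$, so $S(G)=\{k-2\}$. This step is the planar counterpart, rooted at the boundary cycle $C$, of the irreducible-graph enumeration that for the torus needed the computer search of Section~\ref{SecEnu}; in the disk, it is short enough to carry out by hand.
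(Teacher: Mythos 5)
First, a point of reference: this paper does not prove Lemma~\ref{AmpS4Formula} at all --- it is imported from~\cite{trfree4}, where it is one of the main results and requires a substantial development. So your sketch can only be judged on its own merits. Its overall architecture (deflate a $4$-face, extract a $C$-critical subgraph $H$ of the deflation, and exhibit $S(G)$ as a refinement of $S(H)$ via the critical graphs $G_g$ filling the non-facial faces of $H$, with the $+2$ correction at the face absorbing the identification) is the right general shape and parallels the machinery of Lemmas~\ref{lemma-unredu}--\ref{AmpFaceSet} of this paper.

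The genuine gap is the structural dichotomy you defer to an ``elementary but somewhat lengthy case analysis'': the claim that if no $4$-face of $G$ admits a planar deflation then $S(G)=\{k-2\}$ is false. Take $k=9$, $C=u_1u_2\dots u_9$, and $G=C+u_1u_4+u_4u_7$. This graph is triangle-free and $C$-critical: a $3$-coloring of $C$ extends to $G$ if and only if $\varphi(u_1)\neq\varphi(u_4)$ and $\varphi(u_4)\neq\varphi(u_7)$, and each proper supergraph of $C$ admits an extendable precoloring violating exactly the missing constraint. Its internal faces are the $4$-faces $u_1u_2u_3u_4$ and $u_4u_5u_6u_7$ and the $5$-face $u_1u_4u_7u_8u_9$, so $S(G)=\{5\}\neq\{7\}=\{k-2\}$; yet every vertex of every $4$-face lies on $C$, so by your own definition no $4$-face is planar-deflatable. (The lemma's conclusion still holds here because $\{5\}\in\mathcal{S}_{4,7}$, but your argument does not reach it.) The example shows that $4$-faces meeting $C$ in chords or $2$-chords cannot be absorbed into the ``concentric ring'' conclusion; they force a genuinely separate branch in which one cuts along the chord or $2$-chord and must show that $S(G_1)\cup S(G_2)$, for pieces with boundary lengths $k_1+k_2\le k+4$, is again a refinement of a \emph{single} element of $\mathcal{S}_{4,k-2}\cup\mathcal{S}_{5,k}$. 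That step does not follow from your induction hypotheses --- it requires, among other things, knowing that suitable multisets are actually realized in $\mathcal{S}_{4,k-2}$, i.e., lower-bound constructions --- and it is where most of the difficulty of the result in~\cite{trfree4} lies. Until that case is handled, the induction does not close. (Two smaller issues you should also repair: the induction measure must account for the subcase $S(H)=\{k-2\}$ where the graph $G_g$ you pass to may contain all vertices of $G$, so induct on $|V|+|E|$; and you must rule out parallel edges in $G'$ arising from a common neighbor of $v_1$ and $v_3$.)
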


To make use of this claim, we need information about the face sizes in
critical graphs of girth $5$.  The relevant cases are implied by a result of
Thomassen~\cite{thom-torus}, which can be stated as follows.

\begin{theorem}[\cite{thom-torus}]\label{AmpS5}
Let $G$ be a planar graph of girth at least $5$ and with the outer face bounded by
a cycle of length at most $9$.  If $G$ is $C$-critical, then
either $|C|\ge 8$ and $G$ consists of $C$ and its chord, or
$|C|=9$ and $G$ consists of $C$ and a vertex with three neighbors in $C$.
\end{theorem}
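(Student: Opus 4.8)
One route is to quote the general precoloring-extension theorem of Thomassen~\cite{thom-torus}: a $3$-coloring of a face of length at most $9$ in a planar graph of girth at least $5$ extends to the whole graph except in an explicitly listed family of obstructions, and passing to a $C$-critical subgraph of such an obstruction leaves precisely the two graphs in the statement, so that only routine bookkeeping remains. The other route, which I sketch here, is a self-contained induction.

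The plan is an induction on the pair $(|C|,|V(G)|)$ in lexicographic order, proving that every $C$-critical planar graph $G$ of girth $\ge 5$ with outer cycle $C$ of length $\le 9$ is one of the two listed graphs. Since each of those has $|C|\ge 8$, the cases $|C|\in\{5,6,7\}$ say that no such $G$ exists; this is the base of the induction --- equivalently, every $3$-coloring of a cycle of length $\le 7$ extends over any plane graph of girth $\ge 5$ bounded by it --- and I would either verify it directly by the same discharging or take it from~\cite{thom-torus}; it is also used inside the inductive step. In the inductive step I first record the routine consequences of $C$-criticality. Every vertex outside $C$ has degree $\ge 3$, since a vertex of degree $\le 2$ could be deleted without affecting whether a precoloring of $C$ extends. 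If $C$ has a chord $e$, then $C+e$ cuts the closed disk bounded by $C$ into two cycles $C_1,C_2$ with $|C_1|+|C_2|=|C|+2$ and $|C_i|\ge 5$, forcing $|C|\ge 8$ and $|C_i|\le 6$; by Lemma~\ref{SubgrCrit} the subgraph of $G$ inside each $C_i$ is either $C_i$ itself or $C_i$-critical, and the base case forbids the latter, so $G=C+e$, the first listed graph. Hence I may assume $C$ is induced. The same use of Lemma~\ref{SubgrCrit} with the inductive hypothesis shows that every cycle of $G$ of length $\le 7$ other than $C$ bounds a face, and that the subgraph of $G$ inside a non-facial $8$- or $9$-cycle $D\ne C$ is $D$ plus a chord, or (only for $|D|=9$) $D$ plus a single interior vertex of degree $3$; thus the interiors of short non-facial cycles are tiny and rigidly structured.

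The heart of the argument is discharging. Assign each vertex $v$ the charge $d(v)-4$ and each face $f$, including the outer one, the charge $\mathrm{len}(f)-4$; by Euler's formula these sum to $-8$. Girth $\ge 5$ makes every internal face have charge $\ge 0$, and the only vertices of negative charge are those of degree $2$ (which all lie on $C$) or degree $3$, an interior degree-$3$ vertex being incident with three faces each of length $\ge 5$. Using the structural restrictions above to bound how these low-degree vertices cluster, I would design rules sending charge from the outer face (charge $|C|-4\le 5$) and from the $(\ge\!6)$-faces toward the deficient vertices, arranged so that after discharging every internal face and every vertex off $C$ has charge $\ge 0$. All remaining negative charge then sits on $C$ and the outer face; since there are at most $|C|+1\le 10$ such objects, their total descending to $-8$ forces $|C|=9$ and confines the interior to a bounded, explicitly enumerable list, from which a short case check isolates the $9$-cycle with one interior degree-$3$ vertex whose three $5$-faces tile the disk.

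The main obstacle is calibrating the discharging in the almost-tight case $|C|=9$: the extremal graphs really do attain total charge $-8$, with many degree-$2$ vertices along $C$, so the rules have to be sharp, and killing the numerous near-extremal configurations requires the inductive conclusion for $|C|\le 8$ --- in effect a precise description of which precolorings of short cycles fail to extend. Should an argument of that precision prove unwieldy, I would fall back on the first route and cite~\cite{thom-torus} directly.
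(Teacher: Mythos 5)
The paper does not prove this statement at all: it is imported verbatim (after a routine restatement in the language of $C$-critical graphs) from Thomassen's torus paper \cite{thom-torus}, which is exactly your first route. If you take that route, you are doing the same thing as the authors, and nothing more is required of you.

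Your second, ``self-contained'' route is a plan rather than a proof, and as written it has genuine gaps. First, the base case $|C|\le 7$ (no $C$-critical girth-$5$ plane graph with outer cycle of length at most $7$) is where most of the difficulty of Thomassen's theorem lives, and you defer it either to ``the same discharging'' or back to \cite{thom-torus}; the chord argument and the claim that all short cycles bound faces both rest entirely on it. Second, the discharging itself is never specified: you assign initial charges summing to $-8$ and announce that rules ``would be designed'' so that all negative charge ends up on $C$ and the outer face, but the concluding count does not close --- the outer face contributes at most $+5$ and the up-to-nine boundary vertices contribute at least $-2$ each, so ``at most $|C|+1\le 10$ objects summing to $-8$'' forces neither $|C|=9$ nor the two extremal configurations without sharp rules that also feed the boundary vertices, and those rules are precisely the hard part you acknowledge in your final paragraph. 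A minor further point: Lemma~\ref{SubgrCrit} of the paper is stated for $4$-critical graphs embedded in a surface, not for $C$-critical plane graphs, so the step where you cut along a chord needs the standard (but separately stated) fact that $C$-criticality is inherited by subdisks. In short: citing \cite{thom-torus} is correct and matches the paper; the alternative sketch should not be presented as a proof unless the base case and the discharging rules are actually carried out.
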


We can now combine these results.

\begin{lemma}\label{AmpS4}
The following claims hold.
\begin{itemize}
\item[\textrm{(i)}] $\mathcal{S}_{4,4} = \mathcal{S}_{4,5} = \emptyset$ and $\mathcal{S}_{4,6} \subseteq \{\emptyset \}$
\item[\textrm{(ii)}] $\mathcal{S}_{4,7} \subseteq \{\{5\}\}$
\item[\textrm{(iii)}] $\mathcal{S}_{4,8} \subseteq \{\{6\}, \{5,5\}, \emptyset \}$
\item[\textrm{(iv)}] $\mathcal{S}_{4,9} \subseteq \{\{7\}, \{6,5\}, \{5,5,5\}, \{5\}\}$
\end{itemize}
\end{lemma}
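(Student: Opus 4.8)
The plan is to derive Lemma~\ref{AmpS4} by inducting on $k$ and combining Lemma~\ref{AmpS4Formula} (the refinement lemma of~\cite{trfree4}) with Theorem~\ref{AmpS5} (Thomassen's description of small girth-$5$ critical graphs) and Theorem~\ref{Outer} (the Gimbel--Thomassen characterization for $|C|\le 6$). The base cases (i) and (ii) need separate treatment: for $|C|\in\{4,5\}$, Theorem~\ref{Outer} says there is no $C$-critical triangle-free plane graph, so $\mathcal{S}_{4,4}=\mathcal{S}_{4,5}=\emptyset$; for $|C|=6$, Theorem~\ref{Outer} says a $C$-critical graph has all internal faces of length exactly $4$, hence $S(G)=\emptyset$, giving $\mathcal{S}_{4,6}\subseteq\{\emptyset\}$. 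For $|C|=7$, Lemma~\ref{AmpS4Formula} applies: any $S\in\mathcal{S}_{4,7}$ other than $\{5\}$ is a refinement of an element of $\mathcal{S}_{4,5}\cup\mathcal{S}_{5,7}$. By the base case $\mathcal{S}_{4,5}=\emptyset$, and by Theorem~\ref{AmpS5} a $C$-critical girth-$5$ graph with $|C|=7$ does not exist (the theorem only produces such graphs for $|C|\in\{8,9\}$), so $\mathcal{S}_{5,7}=\emptyset$; hence the only possible element is $\{5\}$, proving (ii).

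For (iii) and (iv) I would run the same machinery one and two steps further. For $k=8$: by Lemma~\ref{AmpS4Formula}, each $S\in\mathcal{S}_{4,8}$ other than $\{6\}$ is a refinement of an element of $\mathcal{S}_{4,6}\cup\mathcal{S}_{5,8}$. We have $\mathcal{S}_{4,6}\subseteq\{\emptyset\}$ from (i), and by Theorem~\ref{AmpS5}, a girth-$5$ graph that is $C$-critical with $|C|=8$ consists of $C$ plus a chord, which splits the octagon into a $5$-face and a $5$-face, so $\mathcal{S}_{5,8}=\{\{5,5\}\}$ (or $\subseteq$, if no such graph is critical — but the refinement only needs the inclusion). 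So each element of $\mathcal{S}_{4,8}$ is $\{6\}$ or a refinement of $\emptyset$ or of $\{5,5\}$; a refinement of $\emptyset$ is $\emptyset$, and a refinement of $\{5,5\}$ replaces some $5$ by a $Z\in\mathcal{S}_{4,5}\cup\mathcal{S}_{4,7}$, but $\mathcal{S}_{4,5}=\emptyset$ and $\mathcal{S}_{4,7}\subseteq\{\{5\}\}$, so the only refinements of $\{5,5\}$ are $\{5,5\}$ itself. This yields $\mathcal{S}_{4,8}\subseteq\{\{6\},\{5,5\},\emptyset\}$, proving (iii). For $k=9$: each $S\in\mathcal{S}_{4,9}$ other than $\{7\}$ refines an element of $\mathcal{S}_{4,7}\cup\mathcal{S}_{5,9}$. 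From (ii), $\mathcal{S}_{4,7}\subseteq\{\{5\}\}$, and by Theorem~\ref{AmpS5} a girth-$5$ $C$-critical graph with $|C|=9$ is either $C$ plus a chord (creating a $5$-face and a $6$-face, giving $\{6,5\}$) or $C$ plus a vertex with three neighbors on $C$ (which splits the $9$-cycle into three $5$-faces, giving $\{5,5,5\}$), so $\mathcal{S}_{5,9}\subseteq\{\{6,5\},\{5,5,5\}\}$. Thus $S$ is $\{7\}$, or a refinement of $\{5\}$, $\{6,5\}$, or $\{5,5,5\}$. Refinements of $\{5\}$ are just $\{5\}$ (since $\mathcal{S}_{4,5}=\emptyset$ and $\mathcal{S}_{4,7}\subseteq\{\{5\}\}$); refinements of $\{5,5,5\}$ are just $\{5,5,5\}$ by the same reasoning; refinements of $\{6,5\}$ replace the $6$ by an element of $\mathcal{S}_{4,6}\cup\mathcal{S}_{4,8}\subseteq\{\emptyset,\{6\},\{5,5\}\}$ (yielding $\{5\}$, $\{6,5\}$, or $\{5,5,5\}$) or replace a $5$ (yielding $\{6,5\}$ again). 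Collecting everything gives $\mathcal{S}_{4,9}\subseteq\{\{7\},\{6,5\},\{5,5,5\},\{5\}\}$, proving (iv).

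The routine but slightly delicate point is the bookkeeping of how a chord or a degree-$3$ vertex subdivides the outer cycle into internal faces of the claimed lengths, and checking that in each case the resulting regions have girth $\ge 5$ (so Theorem~\ref{AmpS5} genuinely describes all of $\mathcal{S}_{5,k}$ for $k\le 9$); for instance the vertex with three neighbors on a $9$-cycle must place them so that the three arcs each have the right length to form $5$-faces, which is forced because a shorter face would be a $(\le 4)$-cycle violating girth $5$. I also need to be careful that "refinement" is transitive and that the one-step refinement only ever uses $\mathcal{S}_{4,k}$ and $\mathcal{S}_{4,k+2}$ for the $k$ being refined, so that the induction on the outer face length is well-founded — each refinement strictly decreases the relevant $k$ or keeps $S$ fixed. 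The main obstacle, to the extent there is one, is simply being exhaustive: enumerating all one-step refinement chains starting from the small sets $\emptyset$, $\{5\}$, $\{5,5\}$, $\{6,5\}$, $\{5,5,5\}$ and confirming none of them produces a multiset outside the stated lists, which is finite and mechanical once (i) and (ii) pin down $\mathcal{S}_{4,5},\mathcal{S}_{4,6},\mathcal{S}_{4,7}$.
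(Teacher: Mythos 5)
Your proposal is correct and follows exactly the paper's route: Theorem~\ref{Outer} for part (i), Theorem~\ref{AmpS5} to pin down $\mathcal{S}_{5,k}$ for $k\le 9$, and Lemma~\ref{AmpS4Formula} to propagate to parts (ii)--(iv). The paper states the refinement bookkeeping only implicitly, whereas you carry it out in full, but the argument is the same.
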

\begin{proof}
Theorem~\ref{Outer} implies (i).  By Theorem~\ref{AmpS5}, we have
$\mathcal{S}_{5,k} = \emptyset$ for $k \leq 7$,
$\mathcal{S}_{5,8} \subseteq \{ \{5,5\} \}$, and
$\mathcal{S}_{5,9} \subseteq \{ \{6,5\}, \{5,5,5\}\}$.
The parts (ii)--(iv) then follow by Lemma \ref{AmpS4Formula}. 
\end{proof}

Using Lemma~\ref{lemma-unredu}, we can relate face sizes in a reduction
to the face sizes in the original graph.  Let us introduce two operations
on multisets of face lengths.  The first of them (splitting) corresponds
to adding a chord to one of the faces, the second of them (amplification)
corresponds to pasting critical triangle-free graphs into some of the faces.

Let $I$ be a multiset of integers.
A multiset $A$ is a \emph{splitting}
of $I$ if $A=I$,
or $A$ is obtained from $I$ by removing one appearance of element $6$,
or $A$ is obtained from $I$ by replacing an element $i\ge 7$ by $i-2$,
or $A$ is obtained from $I$ by replacing an element $i\ge 8$ by two elements $i_1$ and $i_2$
such that $i_1,i_2\ge 5$ and $i_1+i_2=i+2$.
A multiset $A$ is an \emph{amplification} of
$I$ if $A$ is obtained from $I$ by replacing some elements $i$ of $I$ by
the elements of some multiset from $\mathcal{S}_{4,i}\cup \mathcal{S}_{4,i+2}$.

\begin{lemma}\label{AmpFaceSet}
Let $G$ be a reducible graph drawn in torus and let $H$ be its reduction.
Then $S(G)$ is an amplification of a splitting of $S(H)$.
\end{lemma}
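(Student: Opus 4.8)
The plan is to apply Lemma~\ref{lemma-unredu} to obtain the subgraph $G_1$ of $G$ and the path $u_1vu_2$ in the boundary of a face $f_0$, falling into one of the three cases (i)--(iii). In each case I will track the multiset of $(\ge\!5)$-face lengths through two stages: first passing from $G_1$ back to $G$ (this is where amplification enters, via Lemmas~\ref{SubgrCrit}, \ref{lemma-facrit}, and the definition of $\mathcal{S}_{4,\cdot}$), and then relating $S(H)$ to $S(G_1)$ (this is where splitting enters, via the explicit description of how $H$ is built from $G_1$ by an identification and edge suppression/deletion or by contracting a degree-two vertex). Composing these two relations in the right order gives that $S(G)$ is an amplification of a splitting of $S(H)$.

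First I would handle the step from $G_1$ to $G$. For each face $g$ of $G_1$, either $g$ is already a face of $G$, or by Lemma~\ref{SubgrCrit}/Lemma~\ref{lemma-facrit} the piece $G_g$ of $G$ drawn inside $g$ is $C_g$-critical, where $C_g$ is a cycle of length $|g|$ (using Corollary~\ref{IrFaceCyc}, the faces of $G$ are bounded by cycles, so the relevant outer walks are cycles; more care is needed if $C^0_g$ is not a cycle, but then $|g|$ is large and one uses the lifted graph $G_g$). Hence the $(\ge\!5)$-faces of $G$ inside $g$ form a multiset in $\mathcal{S}_{4,|g|}$ — and in case (ii), where one face of $H$ corresponds to \emph{two} faces of $G_1$ glued along the deleted edges $u_1v,u_2v$, the combined contribution is a multiset in $\mathcal{S}_{4,i+2}$ for the corresponding face of $H$ of length $i$, because gluing two plane critical graphs along an edge of each boundary yields a plane $C$-critical graph whose outer cycle has length (sum of the two lengths) $-2$. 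This is exactly what the definition of amplification (which allows $\mathcal{S}_{4,i}\cup\mathcal{S}_{4,i+2}$) is designed to absorb. So $S(G)$ is an amplification of $S(G_1)$.

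Next I would handle the step from $G_1$ to $H$, showing $S(G_1)$ is a splitting of $S(H)$ — equivalently, $S(H)$ refines into $S(G_1)$ in the bookkeeping sense. In case (i), $G_1$ is obtained from $H$ by splitting the vertex $u$ into $u_1,u_2$ joined by the path $u_1vu_2$ inside a face $f_0$ of $H$ (after reinstating the suppressed $2$-face); this subdivides the face $f_0$ of $H$ (of some length $\ell$) into two faces whose lengths sum to $\ell+2$, or — if the path attaches ``in line'' — changes the length by $-2$, or leaves $S$ unchanged; matching this against the definition of splitting (the clauses ``replace $i\ge 7$ by $i-2$'' and ``replace $i\ge 8$ by $i_1+i_2=i+2$'' and ``$A=I$'' and ``remove a $6$'') is a finite case check using that all face lengths are at least $4$ and $G_1$ is triangle-free so no new face is shorter than $4$, together with the fact that genuinely new $(\ge\!5)$-faces are the only ones that matter. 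Case (ii) is similar but the edge $uv$ is deleted rather than suppressed, so the face of $H$ containing that edge is the union of the new faces at $u_1v$ and $u_2v$; this again contributes a split of one element into two summing to (that element)$+2$, or is absorbed by the amplification step as noted above. Case (iii) is the cleanest: $H$ is obtained from $G_1$ by contracting the two edges at the degree-two vertex $v$, which merges the two faces $f_0,f_1$ at $v$ into one; reversing this, $G_1$ is obtained from $H$ by subdividing an edge twice and adding a chord of length... — concretely it realizes the ``$i\to i_1+i_2$ with $i_1+i_2=i+2$'' clause (the merged face of $H$ of length $\ell$ splits into $f_0,f_1$ with $|f_0|+|f_1|=\ell+2$).

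The main obstacle will be the bookkeeping in cases (i) and (ii): verifying that every way the path $u_1vu_2$ can sit inside the face $f_0$ of $H$ — attaching $u_1,u_2$ at distance $1$, $2$, or more along the facial cycle, and whether or not $v$ survives into $H$ — produces a change to $S$ that is permitted by the (deliberately generous) definition of splitting, and in particular that lengths never drop below $5$ for the faces we record, never below $4$ overall, and that the ``$-2$'' and ``remove a $6$'' clauses cover the degenerate attachments. One must also be slightly careful that the two plane pieces glued in case (ii) interact correctly with the outer-face accounting (the shared edges $u_1v$ and $u_2v$ are not counted twice), which is where Theorem~\ref{Outer} / Lemma~\ref{AmpS4} guarantees the glued object is still $C$-critical for a cycle of the expected length. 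Once these finitely many configurations are dispatched, composing ``amplification after splitting'' yields the claim.
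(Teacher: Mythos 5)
Your overall plan uses the right ingredients (Lemma~\ref{lemma-unredu} for the structure of $G_1$, Lemma~\ref{lemma-facrit} for the pasted critical pieces, amplification for the pasting and splitting for the combinatorial change near $u_1vu_2$), and this is indeed how the paper proceeds. However, your decomposition is anchored at the wrong intermediate multiset, and the face-length bookkeeping is off in a way that breaks the argument. Concretely, the claim ``$S(G_1)$ is a splitting of $S(H)$'' is false: in case (i) the path $u_1vu_2$ sits \emph{on the boundary} of the single face $f_0$ of $G_1$, and the identification of $u_1$ with $u_2$ followed by suppression of the bigon turns $f_0$ into a face of $H$ of length $|f_0|-2$. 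So passing from $S(H)$ to $S(G_1)$ \emph{increases} a length by $2$ (or creates a new element $5$ or $6$ from a face of $H$ of length $3$ or $4$), and the definition of splitting permits only decreases, deletions of a $6$, or splits into two elements summing to $i+2$ --- never an increase. Your case (i) analysis (``subdivides $f_0$ into two faces summing to $\ell+2$, or changes the length by $-2$'') has the effect of the operation backwards: nothing is subdivided and nothing gets shorter. The same problem occurs in case (iii), where you additionally assert that contracting the two edges at the degree-two vertex $v$ merges $f_0$ and $f_1$; it does not --- edge contraction preserves the two faces and shortens each by $2$, so again both faces of $G_1$ are longer than their images in $H$ and no splitting clause applies.

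The correct composition, which is the paper's, is to amplify from $S(H_0)$ where $H_0=H+uv$ in case (ii) and $H_0=H$ otherwise. The splitting step $S(H)\to S(H_0)$ is the identity except in case (ii), where restoring the edge $uv$ splits the one face of $H$ whose interior contains it into the two faces of $H_0$ corresponding to the faces $g_1,g_2$ of $G_1$ incident with $u_1v$ and $u_2v$; those two faces then contribute via $\mathcal{S}_{4,|g_i|}$ like any other non-face of $G$. The $\mathcal{S}_{4,i+2}$ option in the definition of amplification is \emph{not} there for a gluing of two plane pieces (as you suggest); it exists precisely to absorb the $+2$ discrepancy for $f_0$ (and $f_1$ in case (iii)): these appear in $S(H_0)$ with length $i=|f_0|-2$, while the critical piece $G_{f_0}$ pasted into them has boundary of length $|f_0|=i+2$, so its face-length multiset lies in $\mathcal{S}_{4,i+2}$. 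With this reassignment of the two clauses your argument goes through, but as written the intermediate step fails and the composite does not establish the lemma.
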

\begin{proof}
Let $G_1$ be a subgraph of $G$, $u_1vu_2$ a path, and $f_0$ (and possibly $f_1$) faces
as in Lemma~\ref{lemma-unredu}.
Let $H_0=H+uv$ if the conclusion (ii) of Lemma~\ref{lemma-unredu} holds and $H_0=H$ otherwise;
clearly, $S(H_0)$ is a splitting of $S(H)$.

Let $Z_0$ be the set of $(\ge\!5)$-faces $g$ of $G_1$ such that $g$ is also a face of $G$.
Let $B_0=\{f_0\}$ if the conclusion (i) or (ii) of Lemma~\ref{lemma-unredu} holds, and let
$B_0=\{f_0,f_1\}$ if the conclusion (iii) holds.  Let $A_0$ be the set of faces $g$ of $G_1$ such that $g\not\in B_0$
and $g$ is not a face of $G$.  Let us define multisets $Z=\{|g|:g\in Z_0\}$, $A=\{|g|:g\in A_0\}$ and $B=\{|g|-2:g\in B_0, |g|\ge 7\}$.

Note that all faces in $A_0\cup Z_0$ are also faces of $H_0$ and those in $A_0$ have length at least $6$ by Corollary~\ref{Separations}.
Furthermore, the faces of $H_0$ corresponding to $f_0$ (and possibly $f_1$) have lengths $|f_0|-2$ (and $|f_1|-2$).
Hence, $S(H_0)=Z\cup A\cup B$.

Each face of $G$ of length at least $5$ either belongs to $Z_0$ or it is (a homeomorphic image of) a face of
the graph $G_g$ for some face $g\in A_0\cup B_0$.  By Lemma~\ref{lemma-facrit}, $G_g$ is $C_g$-critical,
and thus $S(G_g)\in \mathcal{S}_{4,|g|}$.  Furthermore, if $g\in B_0$ and $|g|\le 6$, then $S(G_g)=\emptyset$ by Lemma~\ref{AmpS4}(i).
We conclude that the length of each face of $G$ belongs to $Z$ or to $\mathcal{S}_{4,i}$ for some $i\in A$ or to $\mathcal{S}_{4,i+2}$ for some $i\in B$,
and consequently $S(G)$ is an amplification of $Z\cup A\cup B=S(H_0)$.
\end{proof}

We are now ready to prove our main result, the description of lengths of faces
of all triangle-free 4-critical graphs drawn in torus, up to the number of
their 4-faces. 

\begin{proof}[Proof of Theorem~\ref{thm-main}]
Note that $G$ has representativity at least $2$ by Corollary~\ref{Repr1}.
We prove that $S(G)$ is as claimed and that $c(G)\ge 7$ by induction on the number of vertices of $G$.
Both claims hold for the irreducible graphs depicted in Figure~\ref{figIr}, and thus
suppose that $G$ is reducible and the conclusions of Theorem~\ref{thm-main} hold for all graphs with fewer vertices than $G$.
Let $H$ be a reduction of $G$.  By Lemma~\ref{4FaceDec}, we have $c(G)>c(H)\ge 7$, as required.

Furthermore, by induction hypothesis, $S(H)$ is one of the multisets listed in the statement of Theorem~\ref{thm-main}.
By Lemma~\ref{AmpFaceSet}, $S(G)$ is an amplification of a splitting of one of these sets.
The possible splittings of these multisets are:
\begin{itemize}
\item $\{7,5\}\to \{7,5\}$ or $\{5,5\}$,
\item $\{6,5,5\}\to \{6,5,5\}$ or $\{5,5\}$,
\item $\{5,5,5,5\}\to \{5,5,5,5\}$,
\item $\{5,5\}\to \{5,5\}$, and
\item $\emptyset \to \emptyset$
\end{itemize}
Using Lemma~\ref{AmpS4}, the amplifications of these multisets are:
\begin{itemize}
\item $\{7,5\}\to \{7,5\}$ or $\{6,5,5\}$ or $\{5,5,5,5\}$ or $\{5,5\}$,
\item $\{6,5,5\}\to \{6,5,5\}$ or $\{5,5,5,5\}$ or $\{5,5\}$,
\item $\{5,5,5,5\}\to \{5,5,5,5\}$,
\item $\{5,5\}\to \{5,5\}$, and
\item $\emptyset \to \emptyset$.
\end{itemize}
Hence, $S(G)$ is one of the multisets listed in the statement of Theorem~\ref{thm-main}.
Furthermore, if $S(G)=0$, then $G$ is a quadrangulation of the torus and $G=I_4$ by~\cite{thomas2008coloring}.
\end{proof}

\end{document}